\documentclass[reqno]{amsart}
\usepackage{stix2}
\usepackage{amssymb}

\newtheorem{theorem}{Theorem}
\newtheorem{lemma}[theorem]{Lemma}
\newtheorem{proposition}[theorem]{Proposition}
\newtheorem{corollary}[theorem]{Corollary}

\newtheorem{remark}[theorem]{Remark}

\newtheorem*{thma}{Theorem A}
\newtheorem*{thmb}{Theorem B}
\newtheorem*{thmc}{Theorem C}

\newcommand{\R}{{\mathbb R}}
\newcommand{\C}{{\mathbb C}}

\newcommand{\inr}{\int_{\R}}
\newcommand{\inc}{\int_{\C}}
\newcommand{\ess}{{\rm ess}}

\DeclareMathOperator{\re}{{\rm Re}\,}
\DeclareMathOperator{\im}{{\rm Im}\,}
\DeclareMathOperator{\tr}{{\rm tr}\,}

\begin{document}

\title[Toeplitz-type operators between Fock spaces]
{Two-parameter Toeplitz-type operators\\ between Fock spaces}

\author{Xingtang Dong}
\address{School of Mathematics, Tianjin University, Tianjin 300354, China}
\email{dongxingtang@163.com; dongxingtang@tju.edu.cn}

\author{Zipeng Wang}
\address{College of Mathematics and Statistics, Chongqing University,
Chongqing 401331, China}
\email{zipengwang2012@gmail.com; zipengwang@cqu.edu.cn}

\author{Kehe Zhu}
\address{Department of Mathematics and Statistics, SUNY, Albany, NY 12222, USA}
\email{kzhu@albany.edu}

\subjclass[2020]{Primary 30H20; Secondary 47B35.}

\keywords{Fock space, Toeplitz-type operators, Berezin transforms.}

\thanks{Research supported by NNSF of China (Grant Numbers
12271396, 12471116 and 12271328) and
2025CDJ-IAIS YB-004 (Chongqing University).}

\begin{abstract}
We introduce and study a two-parameter family of Toeplitz-type operators
$T^{(s,t)}_\varphi$ on the Fock space $F^p(\C)$ for $0<p\leq\infty$.
For all $0<p, q \leq\infty$ we obtain sharp conditions for $\varphi$ such that
$T^{(s,t)}_\varphi: F^p(\C)\to F^q(\C)$ is bounded or compact. In the
Hilbert space case $T^{(s,t)}_\varphi: F^2(\C)\to F^2(\C)$, we also study the
Berezin transform of $T^{(s,t)}_\varphi$ and membership of $T^{(s,t)}_\varphi$
in Schatten classes. Furthermore, we find a class of noncompact operators
whose Berezin transforms nevertheless vanish at infinity.
\end{abstract}

\maketitle

\section{Introduction}

Let $\C$ be the complex plane and $H(\C)$ be the linear space of all entire
functions on $\C$. For $0<p\leq\infty$, we use $F^p$ to denote the space of all
$f\in H(\C)$ such that the function $f(z)e^{-|z|^2/2}$ belongs to
$L^p(\C, dA)$, where $dA$ is ordinary area measure on the complex
plane. For $f\in F^p$ with $0<p< \infty$ we write
$$\|f\|_p^p=\frac{p}{2\pi}\inc \left|f(z)e^{-\frac{|z|^2}{2}}\right|^p\,dA(z).$$
For $f\in F^\infty$ we define
$$\|f\|_\infty=\sup\left\{|f(z)|e^{-\frac{|z|^2}{2}}: z\in\C\right\}.$$
The spaces $F^p$ are called Fock spaces.

When $p=2$, $F^2$ is a Hilbert space whose inner product is given by
$$\langle f,g\rangle=\inc f(z)\overline{g(z)}\,d\lambda(z),$$
where
$$d\lambda(z)=\frac1\pi e^{-{|z|^2}}\,dA(z)$$
is the standard Gaussian measure on $\C$. Furthermore, $F^2$ is a reproducing
kernel Hilbert space whose kernel function is $K(z,w)=e^{z\overline w}$.
See \cite{Zhu1} for an introduction to the theory of Fock spaces.

In a recent work \cite{DZ3}, the first and third authors introduced and studied
a two-parameter family of integral operators on $L^2(\C, d\lambda)$ and on
$F^2$. More specifically, for any $(s,t)\in\C^2$ with $s\not=0$, define
$$T^{(s,t)}f(z)=\inc K^{(s,t)}(z,w)f(w)\,d\lambda(w),$$
where
$$K^{(s,t)}(z,w)=\frac1{\sqrt s}\,\exp\left(\frac{tz^2-\overline{tw^2}
+2z\overline w}{2s}\right).$$
Here and throughout the paper, the complex square root is defined as follows:
$$\sqrt z=\sqrt{|z|}\,e^{i\theta/2},\qquad z=|z|e^{i\theta},
\qquad \theta\in(-\pi, \pi].$$
Under the inverse Bargmann transform, these operators include the classical
linear canonical transforms on the Lebesgue space $L^2(\R, dx)$ as special cases.
Thus we call these integral operators $T^{(s,t)}$ canonical integral operators
on the Fock space $F^2$.

One of the main results of \cite{DZ3} is that $T^{(s,t)}$ is bounded on $F^2$
if and only if $|s|^2\ge|t|^2+1$. Furthermore, $|s|^2=|t|^2+1$ if and only
if $T^{(s,t)}$ is unitary, and $|s|^2>|t|^2+1$ if and only if $T^{(s,t)}$ is
compact.

It is clear that the function $K^{(1,0)}(z,w)=e^{z\overline{w}}=K(z,w)$ is the
reproducing kernel of $F^2$ and the integral operator $T^{(1,0)}=P$ is the
orthogonal projection from $L^2(\C,d\lambda)$ to $F^2$. Recall that for
a ``reasonably good'' function $\varphi$ on $\C$ we can (densely) define an
operator $T_\varphi: H(\C)\to H(\C)$ by $T_\varphi f=P(\varphi f)$. This is
traditionally called the Toeplitz operator induced by $\varphi$ (or with symbol
$\varphi$). Since the operators $T^{(s,t)}$ include $P$ as a special case,
it is natural for us to introduce the two-parameter Toeplitz-type operators
$T_\varphi^{(s,t)}$ defined by $T_\varphi^{(s,t)}f=T^{(s,t)}(\varphi f)$.

It was shown in \cite{DZ4} that the operator $T^{(s,t)}$ is bounded on $F^p$
if and only if $|s|^2\geq |t|^2+1$, a condition that is actually independent of
$p$. Therefore, when considering $T_\varphi^{(s,t)}$ as operators on $F^p$,
we will make the natural assumption that $|s|^2\geq |t|^2+1$.

For any real number $\beta$ we use $L^\infty(\C, d\lambda_\beta)$ to
denote the space of Lebesgue measurable functions $f$ on $\C$ such that
$$\|f\|_{\infty,\beta}=\ess\sup\left\{|f(z)|\exp\left(-\beta|z|^2\right):
z\in\C\right\}<\infty.$$
When $\beta=0$, we simply write $dA$ for $d\lambda_\beta$.
In this paper we will study the boundedness, compactness, and membership
in Schatten-classes for $T_\varphi^{(s,t)}$ with symbols $\varphi\in
L^\infty(\C, d\lambda_{\beta})$.

Our first result provides upper bounds for the $F^p$ norm of
$T_\varphi^{(s,t)}K_z$, where $K_z$ denotes the reproducing kernel of
$F^2$ at $z$. These estimates are key to determining the boundedness,
compactness, and membership in Schatten classes of the operators
$T^{(s,t)}_\varphi$.

\begin{thma}
Suppose $0<p\le\infty$ and $(s, t)\in\C^2$ with $|s|^2\ge|t|^2+1$. If
$\varphi\in L^\infty(\C, d\lambda_{\beta_0})$ with
$$\beta_0=\frac{|s|^2-|t|^2-1}{2|s|(|s|+|t|)}\geq 0,$$
then there exists a positive constant $C$ such that
$$\left\|T_\varphi^{(s,t)}K_z\right\|_{p}
\leq C\exp\left(\frac{1}{2}|z|^2\right),\qquad z\in\C.$$
If $\varphi\in L^\infty(\C, d\lambda_\beta)$ with $\beta<\beta_0$,
then there exist constants $C>0$ and $\delta\in(0,1)$ such that
$$\left\|T_\varphi^{(s,t)}K_z\right\|_{p} \leq
C\exp\left(\frac{1-\delta}{2}|z|^2\right),\qquad z\in\C.$$
Moreover, the constant $\beta_0$ is sharp in the sense that
for any sufficiently small $\varepsilon>0$ there exist
$\varphi\in L^\infty(\C, d\lambda_{\beta_0+\varepsilon})$ and a
sequence $\{z_n\}$ such that
$$\lim_{|z_n|\rightarrow\infty}\ \exp\left(-\frac{1}{2}|z_n|^2\right)
\left\|T_\varphi^{(s,t)}K_{z_n}\right\|_{p}=\infty$$
and there exist $\varphi_0\in L^\infty(\C, d\lambda_{\beta_0})$ and
a sequence $\{z_n\}$ such that
$$\exp\left(-\frac{1}{2}|z_n|^2\right)\left\|T_{\varphi_0}^{(s,t)}K_{z_n}
\right\|_{p}\nrightarrow 0.$$
\end{thma}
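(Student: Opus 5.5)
The plan is to reduce everything to explicit Gaussian integrals. For $\varphi\in L^\infty(\C,d\lambda_\beta)$ we have $|\varphi(w)|\le \|\varphi\|_{\infty,\beta}e^{\beta|w|^2}$. Writing out the kernel gives
$$T^{(s,t)}_\varphi K_z(u)=\frac{1}{\pi\sqrt s}\,e^{\frac{tu^2}{2s}}\inc \exp\Bigl(-\frac{\overline{t}\,\overline w^{2}}{2s}+\frac{u\overline w}{s}+w\overline z-|w|^2\Bigr)\varphi(w)\,dA(w).$$
We estimate the modulus by moving absolute values inside. The exponent becomes $-(1-\beta)|w|^2+\re(b\,w^2)+\re(c\,w)$, with $|b|=|t|/(2|s|)$ and $c$ linear in $(\overline u,\overline z)$.

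\emph{Step 1 (pointwise bound).} Since $|s|^2\ge |t|^2+1$ forces $\beta_0<1-|t|/(2|s|)$, the integral converges for all $\beta\le\beta_0$ (indeed for $\beta$ slightly larger as well). The standard formula
$$\inc e^{-a|w|^2+\re(bw^2)+\re(cw)}\,dA=\frac{\pi}{\sqrt{a^2-|b|^2}}\exp\Bigl(\frac{a|c|^2+\re(\overline b c^2)}{4(a^2-|b|^2)}\Bigr)$$
then gives $|T_\varphi^{(s,t)}K_z(u)|\le C e^{Q_\beta(u,z)}$. Here $Q_\beta$ is an explicit real quadratic form in $(u,z)$, and it is increasing in $\beta$.

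\emph{Step 2 (the $F^p$ norm).} For $p=\infty$ we take $\sup_u\bigl(Q_\beta(u,z)-|u|^2/2\bigr)$. For $p<\infty$ we compute $\int e^{p(Q_\beta(u,z)-|u|^2/2)}dA(u)$, again a Gaussian integral in $u$. We must check that its $u$-part is negative definite; this should again follow from $\beta\le\beta_0$ together with $|s|^2\ge|t|^2+1$. Either way we get $\|T^{(s,t)}_\varphi K_z\|_p\le Ce^{R_\beta(z)}$, where $R_\beta(z)=\gamma_1(\beta)|z|^2+\re(\gamma_2(\beta)z^2)$ and $R_\beta$ does not depend on $p$. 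Its maximal value on $|z|=1$ is $\Lambda(\beta)=\gamma_1+|\gamma_2|$. The core algebraic claim is that $\Lambda$ is strictly increasing in $\beta$ and that $\Lambda(\beta_0)=\tfrac12$; this is where the specific value $\beta_0=\frac{|s|^2-|t|^2-1}{2|s|(|s|+|t|)}$ must come out. Both upper estimates in Theorem A follow at once, with $1-\delta=2\Lambda(\beta)<1$ when $\beta<\beta_0$. I expect this simplification to be the main obstacle: verifying $\Lambda(\beta_0)=1/2$ requires carefully tracking the phases of $t$, $s$ and the optimal direction of $z$. I would first reduce by rotation, using the invariance $u\mapsto e^{i\theta}u$, $w\mapsto e^{i\theta}w$, to the case $t/s\ge0$, so that all coefficients become real.

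\emph{Step 3 (sharpness).} Take the radial positive symbol $\varphi(w)=e^{\beta|w|^2}$ with $\beta=\beta_0+\varepsilon$, or $\beta=\beta_0$ for $\varphi_0$. With this choice the integral above can be evaluated exactly, not just estimated. The exponent contains only $|w|^2$, $\overline w^{2}$, $w\overline z$ and $u\overline w$, so the complex Gaussian integral gives
$$T_\varphi^{(s,t)}K_z(u)=c_\beta\exp\bigl(A_\beta u^2+B_\beta u\overline z+D_\beta\overline z^{2}\bigr).$$
The constants here are explicit, and for $\varepsilon$ small the integral still converges. The $F^p$ norm of this function is again an exact Gaussian computation. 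Alternatively, for a lower bound it suffices to use $\|f\|_\infty\le C\|f\|_p$ and evaluate $|f(u)|e^{-|u|^2/2}$ at the optimal $u=\mu z$. Either way we obtain $\|T_\varphi^{(s,t)}K_z\|_p\ge c\,e^{\tilde R_\beta(z)}$, where $\tilde R_\beta$ has maximal directional coefficient $\tilde\Lambda(\beta)$.

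One should check that $\tilde\Lambda(\beta)$ agrees with $\Lambda(\beta)$; this is plausible because the triangle inequality loses nothing in the extremal direction. Granting this, choose $z_n=r_ne^{i\theta^*}$ along the maximizing direction with $r_n\to\infty$. For $\beta=\beta_0+\varepsilon$ we get $\tilde\Lambda>1/2$, so $e^{-|z_n|^2/2}\|T_\varphi^{(s,t)}K_{z_n}\|_p\to\infty$. For $\beta=\beta_0$ the exponent along that ray is exactly $|z_n|^2/2$, so the normalized norms stay bounded below and do not tend to $0$.

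If the exact computation for the radial symbol should lose in the extremal direction, the fallback is a phase-corrected symbol. Take $\varphi(w)=e^{\beta|w|^2}\,\overline{\sigma(w)}$, where $\sigma$ is the unimodular phase of the integrand at the specific point $u=\mu z_n$. This makes the integrand nonnegative there, so Step 1 becomes an equality at that point.
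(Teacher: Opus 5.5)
Your Steps 1--2 follow the paper's own route to the two upper bounds, and that part is fine. The paper dominates the integrand by $\pi^{-1}\|\varphi\|_{\infty,\beta}\,Q^{(s,t)}_{1-\beta}(|K_z|)$ and evaluates the two Gaussian integrals in closed form (Proposition~\ref{propNTKap}, which uses $\beta<1/2$ to get $c_{s,t}(1-\beta)>0$). It then bounds $|e^{cz^2}|\le e^{|c||z|^2}$. Your ``core algebraic claim'' is the identity $\Lambda(\beta)-\tfrac12=-H(\beta)/(2c_{s,t}(1-\beta))$, with $H(\beta_0)=0$ and $H>0$ for $\beta<\beta_0$ (Lemma~\ref{lemquadr}).

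The genuine gap is Step 3: the radial symbol $e^{\beta|w|^2}$ does not detect $\beta_0$ when $|s|^2>|t|^2+1$. By the first identity of Lemma~\ref{lemiTK}, $T^{(s,t)}_\varphi K_z=(1-\beta)^{-1}K^{(s,t)}_{z/(1-\beta)}$. The exact norm of $K^{(s,t)}_w$ then gives (Proposition~\ref{propbetasharp})
$e^{-|z|^2/2}\|T^{(s,t)}_\varphi K_z\|_p=C\exp\bigl(\frac{2\beta-\beta^2-2\beta_0}{2(1-\beta)^2}|z|^2\bigr)$ in the worst direction. At $\beta=\beta_0$ the exponent equals $-\beta_0^2/(2(1-\beta_0)^2)<0$, so the normalized norms tend to $0$. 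Blow-up occurs only for $\beta>1-\sqrt{1-2\beta_0}$, which is strictly larger than $\beta_0$. Hence $\tilde\Lambda<\Lambda$: the triangle inequality \emph{does} lose in the extremal direction. The reason is that the integrand carries the phase $\im\bigl(-\overline t\,\overline w^{2}/(2s)+u\overline w/s+w\overline z\bigr)$. Its $w$-linear part $\im\bigl((u/s-z)\overline w\bigr)$ vanishes only at $u=sz$, which is not the maximizing $u$; for example, when $t=0$ and $\beta=\beta_0$ the maximizer is $u=z/\overline s$. Your radial choice works only in the degenerate case $\beta_0=0$, where $\varphi_0\equiv1$.

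Your fallback does not repair this as stated. The phase to be cancelled contains $\im\bigl((u_n/s-z_n)\overline w\bigr)$, which scales with $|z_n|$. So $\sigma$, and hence $\varphi$, changes with $n$, while the theorem requires one symbol for the whole sequence. One workable fix is to localize:
\begin{itemize}
\item As a function of $w$, the modulus of the integrand is a Gaussian of fixed shape centred at a point depending linearly on $(u_n,z_n)$.
\item Choose $|z_n|$ sparse, set $\varphi=e^{\beta|w|^2}\overline{\sigma_n}$ on pairwise disjoint disks of fixed radius $\rho$ around these centres, and $\varphi=0$ elsewhere.
\item Then $|T^{(s,t)}_\varphi K_{z_n}(u_n)|\ge(1-Ce^{-c\rho^2})$ times the majorant. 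The supremum in $u$ and the $L^p$ norm of a Gaussian majorant have the same exponential rate.
\item Both sharpness assertions then follow from $H(\beta_0)=0$ and $H<0$ on $(\beta_0,\tfrac12)$.
\end{itemize}
The paper instead uses the explicit non-radial symbol $\varphi_\gamma(u)=\exp[\gamma(\im(e^{i\theta}u))^2+2i\beta_0\re(e^{i\theta}u)\im(e^{i\theta}u)]$, which has anisotropic modulus and a quadratic phase. Along explicitly chosen rays it computes $e^{-|z_r|^2/2}T^{(s,t)}_{\varphi_\gamma}k_{\xi_r}(z_r)$ in closed form (Lemma~\ref{lem:sharpnesssymbol}), obtaining a constant times $\exp\bigl(\frac{h\delta}{h-\delta}r^2\bigr)$ with $\delta=\gamma-\beta_0$. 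Point evaluation then yields both sharpness claims.
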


Our next result concerns the boundedness, compactness and and membership
in Schatten classes for the operators $T^{(s,t)}_\varphi$. In the rest of
the paper, for convenience and simplicity of terminology, we will say that
a bounded linear operator $T: F^p\to F^q$, where $0<p,q\le\infty$, is
compact if $\|Tf_n\|_{q}\to0$ (as $n\to\infty$) for every bounded
sequence $\{f_n\}\subset F^p$ that converges to $0$ uniformly on
compact subsets of $\C$. It is well known that, for $1<p<\infty$, the space
$F^p$ is a reflexive Banach space and a sequence $\{f_n\}$ converges to $0$
weakly in $F^p$ if and only if the sequence is bounded in $F^p$ and
converges to $0$ uniformly on compact sets. So when $1<p<\infty$ and
$1\leq q\leq\infty$, this definition of compactness is equivalent to the usual
notion of compact operators between two Banach spaces.

\begin{thmb}
Let $(s,t)\in\C^2$ with $|s|^2\geq|t|^2+1$ and
$$\beta_0=\frac{|s|^2-|t|^2-1}{2|s|(|s|+|t|)}.$$
Then the operator $T_\varphi^{(s,t)}$ has the following properties.
\begin{enumerate}
\item[(a)] For any $\beta>\beta_0$ there exists a function
$\varphi\in L^\infty(\C, d\lambda_{\beta})$ such that $T_\varphi^{(s,t)}$
is unbounded from $F^p$ to $F^q$ for any $0<p,q\leq\infty$.
\item[(b)] If $\varphi\in L^\infty(\C,d\lambda_{\beta_0})$, then
$T_\varphi^{(s,t)}$ is bounded from $F^p$ to $F^q$ for all $0<p\leq q\leq\infty$.
\item[(c)] There exists a function $\varphi\in L^\infty(\C, d\lambda_{\beta_0})$
such that $T_\varphi^{(s,t)}$ is unbounded from $F^p$ to $F^q$ for all
$0<q<p\leq\infty$.
\item[(d)] There exists a function $\varphi\in L^\infty(\C, d\lambda_{\beta_0})$
such that $T_\varphi^{(s,t)}$ is not compact from $F^p$ to $F^q$ for any
$0< p\leq q\leq\infty$.
\item[(e)] If $\varphi\in L^\infty(\C, d\lambda_\beta)$ with $\beta<\beta_0$,
then $T_\varphi^{(s,t)}$ is compact from $F^p$ to $F^q$ for all
$0< p,q\leq\infty$.
\item[(f)] If $\varphi\in L^\infty(\C, d\lambda_\beta)$
with $\beta<\beta_0$, then on $F^{2}$, $T_\varphi^{(s,t)}$ belongs to the
Schatten $p$-class for any $p>0$. Additionally,
$${\rm tr } \left(T_\varphi^{(s,t)}\right)=\inc \varphi(z) K^{(s,t)}(z,z)
\,d\lambda(z).$$
\end{enumerate}
\end{thmb}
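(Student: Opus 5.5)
Our plan is to reduce everything to pointwise analysis of the integral kernel of $T^{(s,t)}_\varphi$, using Theorem A as the engine. Write
$$T_\varphi^{(s,t)}f(z)=\inc K^{(s,t)}(z,w)\varphi(w)f(w)\,d\lambda(w),$$
and note that $\overline{K^{(s,t)}(z,w)}$, viewed as a function of $w$, is (up to the factor $1/\sqrt s$) a Gaussian-type function. Completing the square in $|\exp((tz^2-\overline{tw^2}+2z\overline w)/(2s))|e^{-|w|^2}$ and maximizing over the direction of $w$ (taking $\arg w$ so that $\re(-\overline{t w^2}/s)$ equals $|t||w|^2/|s|$) shows that the effective weight in $w$ decays like $\exp(-(1-|t|/|s|)|w|^2/\cdot)$ up to cross terms. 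This is exactly where $\beta_0$ comes from. Theorem A already packages this: for $\varphi\in L^\infty(\C,d\lambda_{\beta_0})$ we have $\|T_\varphi^{(s,t)}K_z\|_q\lesssim e^{|z|^2/2}$, and for $\beta<\beta_0$ we gain the factor $e^{-\delta|z|^2/2}$.

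For (b), take $p\le q$. Since $F^p\subset F^q$ continuously, it suffices to treat $p=q$ when $q\le 1$ and to pass through $F^1$ or $F^\infty$ otherwise. We represent $f\in F^p$ atomically, $f=\sum c_k k_{a_k}$ with normalized kernels on a lattice and $\{c_k\}\in\ell^p$ (for $p\le1$ this is the atomic decomposition). Then $q$-subadditivity for $q\le1$, together with Theorem A, gives $\|T_\varphi f\|_q^q\le \sum|c_k|^q\,\|T_\varphi k_{a_k}\|_q^q\lesssim \|c\|_{\ell^q}^q\le \|c\|_{\ell^p}^q$. For $q\ge1$ we instead use a Schur test on the kernel $|K^{(s,t)}(z,w)\varphi(w)|e^{-|w|^2}$ with weights $e^{\pm|\cdot|^2/2}$; the Gaussian integrals are exactly those that produce the bound in Theorem A, which gives $F^1\to F^1$ and $F^\infty\to F^\infty$ boundedness, and interpolation then covers $1\le p\le q\le\infty$.

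For (e), the $\delta$-gain in Theorem A lets us approximate $\varphi$ by $\varphi\chi_{|w|<R}$. The truncated operators are compact, since their kernel is supported on a compact set in $w$ and uniform convergence on compacta kills the output. The tail is small in norm by the same estimates, now with an extra $e^{-\delta' R^2}$. For $p>q$ we need more than boundedness, namely an $L^{r}$-type condition, and here the $\beta<\beta_0$ decay, being Gaussian, makes every $\ell^{r}$ sum of $\|T k_{a}\|$ finite, so both boundedness and compactness follow. For (f), we use $\sum_k\|T_\varphi e_k\|^p$-type bounds, or more simply factor $T_\varphi^{(s,t)}=A B$ with Gaussian-weighted multiplication operators: positivity of $\delta$ shows that $\langle |T|\,k_z,k_z\rangle$ decays like a Gaussian, hence is integrable to any power, which gives $S_p$ membership for all $p$ via the Berezin-type criterion for $p\ge1$ and the lattice criterion for $p<1$. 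The trace formula then follows from $\tr T=\inc \langle Tk_z,k_z\rangle\,d\lambda$-type identities (namely $\tr T=\inc \langle TK_z,K_z\rangle d\lambda(z)$) and Fubini, using the reproducing property to collapse to $\varphi(z)K^{(s,t)}(z,z)$.

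For the counterexamples (a), (c) and (d), we take $\varphi(w)=e^{\beta|w|^2}$ restricted to a sector around the extremal direction, or alternatively to disjoint discs $D(z_n,1)$ along a sparse sequence in that direction. Theorem A's sharpness constructions show that $e^{-|z_n|^2/2}\|T K_{z_n}\|_q\to\infty$ when $\beta>\beta_0$, which gives (a). When $\beta=\beta_0$ this quantity does not tend to $0$, and since $k_{z_n}\to0$ uniformly on compacta we obtain (d). For (c) we take $\varphi=e^{\beta_0|w|^2}$ on the full extremal sector. Testing on $f=\sum c_nk_{z_n}$ with $c\in\ell^p\setminus\ell^q$, the outputs are almost orthogonal (their mass is localized near separated points), so $\|Tf\|_q\gtrsim\|c\|_{\ell^q}=\infty$. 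The main obstacle I expect is exactly this localization and almost-orthogonality of the $T k_{z_n}$, together with handling $p=\infty$ via weak-$*$ limits.
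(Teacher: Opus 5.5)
Parts (b), (e) and (f) are essentially fine. The genuine gap is in the counterexamples (a), (c), (d), which is where the real difficulty of Theorem B lies. Appealing to ``Theorem A's sharpness constructions'' is not an independent input. In the paper, the sharpness half of Theorem A is obtained (Remark~\ref{remsharp}) from exactly the construction that proves (a) and (d), so citing it here is circular.

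The symbols you actually propose, $e^{\beta|w|^2}$ cut to a sector or to discs, are positive, and that is the problem. The threshold $\beta_0$ comes from absolute-value bounds on $\inc K^{(s,t)}(z,w)\varphi(w)K_\xi(w)\,d\lambda(w)$. With a positive symbol, the chirp phase of $K^{(s,t)}(z,w)K_\xi(w)$ is left untouched and produces cancellation. Proposition~\ref{propbetasharp} makes this concrete for the radial symbol $e^{\beta|w|^2}$, where $e^{-|z|^2/2}\|T_\varphi^{(s,t)}K_z\|_p$ behaves as follows:
\begin{itemize}
\item it stays bounded for every $\beta\le 1-\sqrt{1-2\beta_0}$, a threshold strictly larger than $\beta_0$ when $\beta_0>0$;
\item at $\beta=\beta_0$ it even decays like $e^{-c|z|^2}$.
\end{itemize}
Nothing in your proposal shows that truncating to a sector or to discs removes this cancellation. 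At $\beta=\beta_0$, where the absolute-value bound is only $O(1)$, any loss from residual oscillation is enough to destroy the lower bound needed for (c) and (d).

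The missing idea is a symbol whose phase compensates the kernel's. The paper takes $\varphi_\gamma(u)=\exp[\gamma(\im(e^{i\theta}u))^2+2i\beta_0\re(e^{i\theta}u)\im(e^{i\theta}u)]$ with $\theta=(\theta_s+\theta_t)/2$. It tests at $\xi_r=e^{-i\theta}ir$ and $z_r=e^{-i\eta}ir/(|s|+|t|)$, and evaluates the resulting Gaussian integral exactly (Lemma~\ref{lem:sharpnesssymbol}). The value $e^{-|z_r|^2/2}T_{\varphi_\gamma}^{(s,t)}k_{\xi_r}(z_r)$ is a constant times $\exp\bigl(\frac{h\delta}{h-\delta}r^2\bigr)$ with $\delta=\gamma-\beta_0$. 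It blows up for $\gamma>\beta_0$ and is a nonzero constant at $\gamma=\beta_0$. Since $\|g\|_\infty\le\|g\|_q$, one symbol then serves all $p,q$.

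For (c), even granting such a lower bound, your ``almost orthogonality'' step needs off-diagonal decay estimates for $T_\varphi^{(s,t)}k_\xi$ that you neither have nor prove. The paper sidesteps this in three steps:
\begin{itemize}
\item $F^\infty$-boundedness from (b), together with Cauchy estimates, upgrades the one-point lower bound to $|g_r(z)|e^{-|z|^2/2}\ge c>0$ on a disc $B(z_r,\rho)$ of fixed radius;
\item Khintchine's inequality for $f_\varepsilon=\sum_{j\le N}\varepsilon_jk_{\xi_{r_j}}$ then gives $\mathbb E\|T_\varphi^{(s,t)} f_\varepsilon\|_q^q\gtrsim N$;
\item this is set against $\|f_\varepsilon\|_p\lesssim N^{1/p}$, with no interference estimates at all.
\end{itemize}

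The remaining points are minor. Your truncation argument for (e) is a valid alternative to the paper's lattice argument. $T_{\varphi\chi_{\{|w|<R\}}}^{(s,t)}$ is compact by a direct kernel estimate. Also $\|\varphi\chi_{\{|w|\ge R\}}\|_{\infty,\beta'}\le e^{-(\beta'-\beta)R^2}\|\varphi\|_{\infty,\beta}$ for $\beta<\beta'<\beta_0$, so the tail is small in $F^p\to F^q$ norm.

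In (f) the criteria are misattributed. For positive $S$ and $p\ge1$, integrability of $\widetilde S^{\,p}$ is only necessary for $S\in S_p$, and the lattice criterion concerns Toeplitz operators with positive symbols, not $|T|$. The repair is the concavity bound $\langle|T|^pk_z,k_z\rangle\le\|Tk_z\|^p$ for $0<p\le2$, combined with Theorem A and $S_p\subset S_{p'}$ for $p<p'$.

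In (b), the Schur integrals are not those of Theorem A. They need their own computation, which happens to give the same threshold $\beta_0$.
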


It should be noted that Theorem B implies that the constant $\beta_0$
is sharp for bounded (and compact) Toeplitz-type operators from $F^p$
to $F^q$ for all $0< p, q\leq\infty$. Also, Theorem B implies that the
constant $\beta_0$ constitutes a dividing point between bounded and
compact Toeplitz-type operators.

It is worth noting one special case, namely, Toeplitz-type operators
with bounded symbols on $F^p$ with $1\leq p\leq\infty$.
In this case, Theorem B implies that $T_\varphi^{(s,t)}$ is bounded for
any $(s,t)\in\C^2$ with $|s|^2\geq|t|^2+1$. Furthermore, we will obtain that
$$T_\varphi^{(s,t)}=T^{(s,t)}T_\varphi,$$
which reveals the relationship between our Toeplitz-type operators and
classical Toeplitz operators. As an application, we will determine exactly when
$T^{(s,t)}_\varphi$ is compact. See Proposition~\ref{prop:Toeplitz-old-new}
and Theorem~\ref{thmBSym}.

Now it is natural for us to consider the Berezin transform of
Toeplitz-type operators with bounded symbols on $F^2$.
Recall that for any bounded linear operator $T$ on $F^2$ the function
$$\widetilde T(z)=\langle Tk_z, k_z\rangle$$
is called the Berezin transform of $T$, where $k_z$ are the normalized
reproducing kernels of $F^2$. The following result provides interesting
information about the Berezin transform of our Toeplitz-type operators.

\begin{thmc}
Let $(s, t)\in\C^2$ with $|s|^2\geq|t|^2+1$. Then the Berezin transform
of $T^{(s,t)}_\varphi$ on $F^2$ have the following properties.
\begin{enumerate}
\item[(1)] If $s\neq 1\pm i|t|$, then for every $\varphi\in L^\infty(\C,dA)$,
there exist positive constants $C$ and $\delta$ such that
$$ \left|\widetilde{T_\varphi^{(s,t)}}(z)\right|\leq C \exp\left(-\delta|z|^2
\right), \qquad z\in\C.$$
\item[(2)] If $s=1\pm i|t|$, then there exists $\varphi\in L^\infty(\C,dA)$
(or specifically, $\varphi\equiv1$) and an increasing sequence $\{|z_n|\}$
tending to $\infty$ such that
$$\left|\widetilde{T_\varphi^{(s,t)}}(z_n)\right|=\frac{1}{\sqrt{|s|}}.$$
\end{enumerate}
\end{thmc}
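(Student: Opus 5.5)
The plan is to reduce the Berezin transform to an integral against the explicit kernel, then treat the compact case $|s|^2>|t|^2+1$ with Theorem A, the unitary case $|s|^2=|t|^2+1$ by a direct Gaussian computation, and part (2) by an exact formula with $\varphi\equiv1$.

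\emph{Step 1: a formula for the Berezin transform.} As a function of $u$, $K^{(s,t)}(u,w)$ is a constant (in $u$) times $\exp\bigl(\frac{t}{2s}u^2+\frac{u\overline w}{s}\bigr)$. Since $|t/s|<1$, this function lies in $F^2$, so the reproducing property applies in the $u$-variable:
$$\widetilde{T^{(s,t)}_\varphi}(z)=e^{-|z|^2}\inc K^{(s,t)}(z,w)\,\varphi(w)\,e^{w\overline z}\,d\lambda(w)=e^{-|z|^2}\bigl(T^{(s,t)}_\varphi K_z\bigr)(z).$$

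\emph{Step 2: part (1) when $|s|^2>|t|^2+1$.} Here $\beta_0>0$, so a bounded $\varphi$ lies in $L^\infty(\C,d\lambda_\beta)$ with $\beta=0<\beta_0$. The second half of Theorem A with $p=2$ gives $\|T^{(s,t)}_\varphi K_z\|_2\le C e^{(1-\delta)|z|^2/2}$. Hence
$$|\widetilde{T^{(s,t)}_\varphi}(z)|\le e^{-|z|^2}\|T^{(s,t)}_\varphi K_z\|_2\|K_z\|_2\le Ce^{-\delta|z|^2/2}.$$

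\emph{Step 3: part (1) when $|s|^2=|t|^2+1$ and $s\neq1\pm i|t|$.} Now $\beta_0=0$ and Theorem A only gives a non-decaying bound, so I would estimate directly:
$$|\widetilde{T^{(s,t)}_\varphi}(z)|\le\|\varphi\|_\infty|s|^{-1/2}e^{-|z|^2}\inc\exp\Bigl(\re\Bigl[\tfrac{tz^2-\overline{tw^2}+2z\overline w}{2s}\Bigr]+\re(w\overline z)\Bigr)d\lambda(w).$$
The $w$-integral has the form $\frac1\pi\int\exp\bigl(-|w|^2+\re(\alpha w^2)+\re(\gamma w)\bigr)dA(w)$ with $|\alpha|=|t|/|s|<1$ and $\gamma$ linear in $z,\overline z$. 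It is an explicit Gaussian integral equal to $(1-|\alpha|^2)^{-1/2}\exp(Q(z))$ for a real quadratic form $Q$. The goal is to show that the total exponent $-|z|^2+\re(tz^2/2s)+Q(z)$ is $\le-\delta|z|^2$, which amounts to checking negative definiteness of a $2\times2$ real quadratic form. I expect this to be the main obstacle: one must verify, using $|s|^2=|t|^2+1$, that its maximum over $\arg z$ is strictly negative exactly when $s\ne1\pm i|t|$. I would first try to complete squares as in Step 4, hoping the maximal coefficient is again a negative multiple of $(\re s-1)^2+(|\im s|-|t|)^2$.

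\emph{Step 4: the exact formula for $\varphi\equiv1$, and part (2).} For $\varphi\equiv1$ the computation is exact. Writing the integrand as $K_z(w)\overline{g(w)}$ with $g$ entire and in $F^2$, the reproducing property gives
$$T^{(s,t)}K_z(u)=\frac1{\sqrt s}\exp\Bigl(\frac{tu^2-\overline t\,\overline z^2+2u\overline z}{2s}\Bigr).$$
Therefore
$$|\widetilde{T^{(s,t)}_1}(z)|=|s|^{-1/2}\exp\Bigl(\frac{\im(tz^2)\,\im s}{|s|^2}+|z|^2\Bigl(\frac{\re s}{|s|^2}-1\Bigr)\Bigr).$$
Choosing $\arg z$ so that $\im(tz^2)\,\im s=|t||z|^2|\im s|$ makes the exponent equal to $-|z|^2\bigl(|s|^2-\re s-|t||\im s|\bigr)/|s|^2$. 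Writing $s=x+iy$ and using $|s|^2\ge|t|^2+1$,
$$x^2+y^2-x-|t||y|\ge\tfrac12\bigl[(x-1)^2+(|y|-|t|)^2\bigr]\ge0,$$
with equality iff $s=1\pm i|t|$. So when $s=1\pm i|t|$, taking $z_n=r_ne^{i\theta_0}$ along the maximizing ray with $r_n\uparrow\infty$ gives $|\widetilde{T^{(s,t)}_1}(z_n)|=1/\sqrt{|s|}$, which proves (2). The same completing-squares identity shows the $\varphi\equiv1$ case of (1) and guides the computation in Step 3.
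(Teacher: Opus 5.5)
Steps 1, 2 and 4 are sound. Step 2 takes a different route from the paper. The paper treats $|s|^2>|t|^2+1$ through the pointwise kernel estimate \eqref{eqUBKzw} imported from \cite{DZ4} (Lemma~\ref{lembiBere}). You instead combine the second half of Theorem A (with $\beta=0<\beta_0$, $p=2$) with Cauchy--Schwarz against $\|K_z\|_2=e^{|z|^2/2}$. That is valid and more self-contained, though the rate is less explicit. Step 4 is exactly the paper's proof of (2). Your inequality $x^2+y^2-x-|t||y|\ge\frac12[(x-1)^2+(|y|-|t|)^2]$ is correct, since the difference is $\frac12(|s|^2-|t|^2-1)$.

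The gap is Step 3, the one case of (1) not covered by anything else: $|s|^2=|t|^2+1$, $s\neq1\pm i|t|$, and general bounded $\varphi$. You set up the Gaussian integral but never compute $Q$ or prove negativity; you only state a hope. This is not a formality. Once the modulus is inside the integral, the cancellation behind the exact exponent of Step 4 is lost, and the new exponent is genuinely different.

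Carry out the integral, e.g.\ by Lemma~\ref{lemiTK} with $\alpha=1$. (In your notation $|\alpha|=|t|/(2|s|)$, not $|t|/|s|$, which is harmless.) Using $4|s|^2-|t|^2=3|s|^2+1$, the resulting bound is $2\|\varphi\|_{\infty}(3|s|^2+1)^{-1/2}$ times the exponential of
\[
\frac{(\re s-1)\re(tz^2)+2\im s\,\im(tz^2)-2(|s|^2-\re s)|z|^2}{3|s|^2+1}.
\]
Its maximum over $\arg z$ has coefficient $\bigl[|t|\sqrt{(\re s-1)^2+4(\im s)^2}-2(|s|^2-\re s)\bigr]/(3|s|^2+1)$. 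Since $\sqrt{(\re s-1)^2+4(\im s)^2}\ge2|\im s|$, requiring this to be negative is strictly stronger than the inequality you verified in Step 4. So Step 4's completing-squares identity does not settle it.

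The missing ingredient is the identity, with $s=x+iy$ and $|t|^2=x^2+y^2-1$,
\[
4(x^2+y^2-x)^2-\bigl[(x-1)^2+4y^2\bigr](x^2+y^2-1)=(x-1)^2\bigl[3(x^2+y^2)+1\bigr].
\]
Together with $|s|^2-\re s\ge0$, it shows the coefficient equals
\[
-\frac{(\re s-1)^2}{2(|s|^2-\re s)+|t|\sqrt{(\re s-1)^2+4(\im s)^2}}.
\]
The controlling quantity is therefore $(\re s-1)^2$, and a different identity from yours is needed. This is strictly negative exactly when $\re s\neq1$, which on the surface $|s|^2=|t|^2+1$ is equivalent to $s\neq1\pm i|t|$. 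Without this computation, part (1) remains unproved in the unitary case.
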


Recall from Theorem B that if $\varphi\in L^\infty(\C, dA)$ and
$|s|^2>|t|^2+1$, then $T_\varphi^{(s,t)}$ is compact on $F^2$.
When a linear operator $T$ on $F^2$ is compact, its
Berezin transform has the property that $\langle Tk_z, k_z\rangle\to0$
as $z\to\infty$, because $k_z\to0$ weakly in $F^2$ as $z\to\infty$.
The first estimate in Theorem C clearly confirms this and gives more
information: when $T_\varphi^{(s,t)}$ is compact on $F^2$, its Berezin
tranform tends to $0$ very rapidly (i.e. second-order exponentially) as
$z\to\infty$.

More interesting (and somewhat surprising) is the case when
$\varphi\in L^\infty(\C, dA)$ and $|s|^2=|t|^2+1$. In this case,
Theorem~\ref{thmBSym} shows that $T_\varphi^{(s,t)}$ is compact on
$F^2$ (even on $F^p$ for any $1\leq p\leq \infty$) if and only if the
Berezin transform of the symbol function $\varphi$ satisfies the following:
$$\lim_{|z|\to\infty} \widetilde{\varphi}(z)= 0.$$
Note that here $\widetilde\varphi(z)=\langle T_\varphi k_z, k_z\rangle$ is
the Berezin transform of the classical Toeplitz operator $T_\varphi$.
Therefore, by part (1) of Theorem C, if $|s|^2=|t|^2+1$ with
$s\neq 1\pm i|t|$ and if $\widetilde\varphi(z)\not\to0$ as $z\to\infty$
(for example, $\varphi=1$), the operator $T_\varphi^{(s,t)}$ is not
compact but its Berezin transform still tends to $0$ second-order
exponentially as $z\to\infty$.

\section{The $F^p$ norm of $T_\varphi^{(s,t)}k_z$}

In this section we seek conditions on the symbol $\varphi$ that ensure
the $F^p$ norm of $T_\varphi^{(s,t)}k_z$ is uniformly bounded for all
$z\in\C$, where $k_z=K_z/\|K_z\|_{F^2}$ is the normalized reproducing
kernel of $F^2$ at $z$. Each $k_w$ is a unit vector in $F^2$; it is also a
unit vector in $F^p$ for any $0<p\le\infty$. See \cite[Theorem 2.7]{Zhu1}.

We begin with some basic computations. Recall from Equation (1.18)
of \cite{B1} that
\begin{equation}\label{eqIC}
\inc e^{\frac{1}{2}\gamma w^{2}+aw}e^{\frac{1}{2}
\overline{\delta w^2}+\overline{bw}}\,d\lambda(w)
=\frac{1}{\sqrt{1-\gamma\overline{\delta}}}
\exp\left(\frac{\overline{\delta}a^{2}+\gamma \overline{b}^{2}
+2a\overline{b}}{2(1-\gamma\overline{\delta})}\right)
\end{equation}
under the assumption $|\gamma+\delta|^{2}<4$, which implies that
$$\re(1-\gamma\overline\delta)=1-\frac14|\gamma+\delta|^2+\frac14|
\gamma-\delta|^2>0.$$
For $|t|<2|s|$, a direct calculation using \eqref{eqIC} shows that
\begin{equation}\label{eqTKw}
T^{(s,t)}K_w(z)=K_w^{(s,t)}(z), \qquad z, w\in \C.
\end{equation}
See Lemma 4 of \cite{DZ3}. We then obtain the following basic result.

\begin{lemma}\label{lemiTK}
Let $(s, t)\in\C^2$ and let $\alpha>|t|/(2|s|)$. Then for all $z,w\in\C$ we have
$$\inc  K^{(s,t)}(w, v)K_z(v) \exp\left(-\alpha|v|^2\right)\,dA(v)
=\frac{\pi}{{\alpha }}\,K^{(s,t)}(w, z/{\alpha})$$
and
\begin{multline*}
\inc\left|K^{(s,t)}(w, v)K_z(v)\right| \exp\left(-\alpha|v|^2\right)\,dA(v)\\
=\frac{2\pi\sqrt{|s|}}{\sqrt{4\alpha^2|s|^2-|t|^2}}
\left|\exp\left(\frac{\overline{s}t}{2|s|^2}w^2-\frac{\overline{s}t
\left({w}+{s}{z}\right)^{2}}{2|s|^2(4\alpha^2|s|^2-|t|^2)}\right)\right|
\exp\left(\frac{\alpha\left|{w}+{s}{z}\right|^2}{4\alpha^2|s|^2-|t|^2}\right).
\end{multline*}
\end{lemma}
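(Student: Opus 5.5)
Both identities reduce to Gaussian integrals. For the first, I will rescale so that \eqref{eqIC} applies directly; for the second, I will take absolute values first and then use the same formula.

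\emph{First identity.} Write $K_z(v)=e^{v\overline z}$. Since $K^{(s,t)}(w,v)$ is conjugate-analytic in $v$, it equals
$$K^{(s,t)}(w,v)=\frac{1}{\sqrt s}\, e^{tw^2/(2s)}\exp\left(-\frac{\overline{t v^2}}{2s}+\frac{w\overline v}{s}\right).$$
Substitute $v=u/\sqrt\alpha$. Then $dA(v)=dA(u)/\alpha$ and $e^{-\alpha|v|^2}dA(v)=(\pi/\alpha)\,d\lambda(u)$. The integral becomes
$$\frac{\pi}{\alpha}\cdot\frac{e^{tw^2/(2s)}}{\sqrt s}\inc e^{\frac12\gamma u^2+au}\,e^{\frac12\overline{\delta u^2}+\overline{bu}}\,d\lambda(u)$$
with
$$\gamma=0,\qquad a=\frac{\overline z}{\sqrt\alpha},\qquad \overline\delta=-\frac{\overline t}{s\alpha},\qquad \overline b=\frac{w}{s\sqrt\alpha}.$$
The hypothesis of \eqref{eqIC} is $|\delta|^2<4$, i.e.\ $|t|/(|s|\alpha)<2$, which is exactly $\alpha>|t|/(2|s|)$. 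Since $\gamma=0$, the prefactor $1/\sqrt{1-\gamma\overline\delta}$ equals $1$. The exponent equals
$$\frac{\overline\delta a^2}{2}+a\overline b=-\frac{\overline t\,\overline z^2}{2s\alpha^2}+\frac{w\overline z}{s\alpha}.$$
Together with the factor $e^{tw^2/(2s)}$, this is precisely the exponent of $K^{(s,t)}(w,z/\alpha)$, which proves the first identity. This is also consistent with \eqref{eqTKw} in the case $\alpha=1$.

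\emph{Second identity.} Here the integrand is $|\cdot|$, so it is no longer analytic. I will write
$$\left|K^{(s,t)}(w,v)K_z(v)\right|=|s|^{-1/2}\exp\re\Big(\frac{tw^2}{2s}-\frac{\overline t\,\overline v^2}{2s}+\frac{w\overline v}{s}+v\overline z\Big).$$
Using $\re X=\re\overline X$, the $v$-dependent part is $\re\left(Av^2+Bv\right)$, where
$$A=-\frac{t}{2\overline s},\qquad B=\frac{\overline w}{\overline s}+\overline z=\frac{\overline{w+sz}}{\overline s}.$$
Since $\exp\re(Av^2+Bv)=\left|e^{Av^2+Bv}\right|=\bigl(e^{Av^2+Bv}\,\overline{e^{Av^2+Bv}}\bigr)^{1/2}$, the integral is a real Gaussian integral. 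The cleanest route is to apply \eqref{eqIC} with the exponent halved. Set $u=\sqrt{\alpha}\,v$. The integrand becomes
$$e^{\frac12\gamma u^2+au}\,e^{\frac12\overline{\delta u^2}+\overline{bu}},\qquad \gamma=\delta=\frac{A}{\alpha},\qquad a=b=\frac{B}{2\sqrt\alpha}.$$
The condition $|\gamma+\delta|^2<4$ reads $|t|/(|s|\alpha)<2$, the same hypothesis as before. Formula \eqref{eqIC} then gives the prefactor
$$\frac{1}{\sqrt{1-|A|^2/\alpha^2}}=\frac{2\alpha|s|}{\sqrt{4\alpha^2|s|^2-|t|^2}}.$$
Multiplying by $\pi/\alpha$ and $|s|^{-1/2}$ yields $2\pi\sqrt{|s|}/\sqrt{4\alpha^2|s|^2-|t|^2}$, as claimed. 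The exponent of \eqref{eqIC} is
$$\frac{2\re(\overline A B^2)/\alpha+|B|^2}{4(1-|A|^2/\alpha^2)},$$
and this is real, which the formula requires since the integrand is positive.

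\emph{Matching the exponent.} The remaining step is to check that this exponent, plus $\re\frac{tw^2}{2s}$, equals the stated expression. This is the only real obstacle, and it is purely algebraic bookkeeping. With $D=4\alpha^2|s|^2-|t|^2$, the $|B|^2$ term contributes
$$\frac{\alpha^2|s|^2\,|w+sz|^2}{|s|^2 D}\cdot\frac1\alpha=\frac{\alpha|w+sz|^2}{D}.$$
The $\overline AB^2$ term gives $-\re\dfrac{\overline s\,t\,(w+sz)^2}{2|s|^2D}$ after conjugation. For the constant term, note
$$\re\frac{tw^2}{2s}=\re\frac{\overline s\,t\,w^2}{2|s|^2}.$$
These pieces combine into the stated modulus-of-exponential times real exponential. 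I will carry out this matching carefully, tracking conjugations, since sign and conjugation errors are the main risk.
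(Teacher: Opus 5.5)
Your argument is correct and is essentially the paper's: the same substitution $u=\sqrt\alpha\,v$, and for the second identity the same device of writing the integrand as $|e^{X/2}|^2$ and applying \eqref{eqIC} with $\gamma=\delta$, $a=b$. For the first identity you apply \eqref{eqIC} directly with $\gamma=0$, whereas the paper recognizes the integral as $\frac{\pi}{\sqrt\alpha}T^{(\alpha s,t)}K_{z/\sqrt\alpha}(\sqrt\alpha\,w)$ and quotes \eqref{eqTKw}; this is the same computation.

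One slip needs fixing when you write it out. The exponent produced by \eqref{eqIC} is $\frac{\re(\overline AB^2)+\alpha|B|^2}{4(\alpha^2-|A|^2)}$, not the displayed $\frac{2\re(\overline AB^2)/\alpha+|B|^2}{4(1-|A|^2/\alpha^2)}$, which is off by factors $2\alpha$ and $\alpha$ in the two terms. The term-by-term values you then state, $\frac{\alpha|w+sz|^2}{D}$ and $-\re\frac{\overline s t(w+sz)^2}{2|s|^2D}$, are nonetheless the correct ones.
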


\begin{proof}
With the change of variables $u=\sqrt{\alpha}\,v$, we obtain
\begin{align*}
&\inc  K^{(s,t)}(w, v)K_z(v) \exp\left(-\alpha|v|^2\right)\,dA(v)\\
&\qquad=\frac{\pi}{\alpha\sqrt{s}}\inc  \exp\left(\frac{t w^2}{2 {s}}
-\frac{\overline t \overline{u}^2}{2{\alpha}{s}}+\frac{w\overline{u}}
{\sqrt{\alpha}{s}}+\frac{u\overline{z}}{\sqrt{\alpha}}\right)
\,d\lambda(u)\\[8pt]
&\qquad=\frac{\pi}{\sqrt{\alpha }}T^{(\alpha s,t)}
K_{z/\sqrt{\alpha}}(\sqrt{\alpha}w).
\end{align*}
Since $|t|<2\alpha|s|$, it follows from \eqref{eqTKw} that
$$T^{(\alpha s,t)}K_{z/\sqrt{\alpha}}(\sqrt{\alpha}w)=
K^{(\alpha s,t)}(\sqrt{\alpha}w, z/\sqrt{\alpha})=
\frac{1}{\sqrt{\alpha }} K^{(s,t)}(w, z/{\alpha}).$$
The desired result for the first equality then follows.

Similarly, with the change of variables $u=\sqrt{\alpha}\,v$, we obtain
\begin{multline*}
\inc\left|K^{(s,t)}(w, v)K_z(v)\right|\exp\left(-\alpha|v|^2\right)\,dA(v)\\
=\frac{\pi}{\alpha\sqrt{|s|}}\left|\exp\left(\frac{\overline{s}t}
{2|s|^2}w^2\right)\right|\inc \left|\exp\left(\frac{-t}{2\alpha
\overline{s}}u^2+\frac{ u\overline{w}}{\sqrt{\alpha}\overline{s}}
+\frac{u\overline{z}}{\sqrt{\alpha}}\right)\right|\,d\lambda(u).
\end{multline*}
Since $|t|<2\alpha|s|$, it follows from \eqref{eqIC} that
\begin{align*}
&\inc\left|\exp\left(\frac{-t}{4\alpha \overline{s}}u^2+
\frac{ u\overline{w}}{2\sqrt{\alpha}\overline{s}}+
\frac{u\overline{z}}{2\sqrt{\alpha}}\right)\right|^2\,d\lambda(u)\\
&\qquad=\frac{2\alpha|s|}{\sqrt{4\alpha^2|s|^2-|t|^2}}
\left|\exp\left(\frac{-\overline{s}t\left({w}+{s}{z}\right)^{2}}{2|s|^2
(4\alpha^2|s|^2-|t|^2)}\right)\right|\exp\left(\frac{\alpha\left|{w}
+{s}{z}\right|^2}{4\alpha^2|s|^2-|t|^2}\right),
\end{align*}
from which we obtain the second desired equality.
\end{proof}

For any $\alpha>0$ and $(s,t)\in\C^2$ with $s\not=0$ we define
$$Q^{(s,t)}_\alpha f(z)=\inc \left|K^{(s,t)}(z,w)\right|f(w)
\exp\left(-\alpha|v|^2\right)\,dA(w).$$
The following result is critical for the main result of this section.

\begin{proposition}\label{propNTKap}
Let $(s, t)\in\C^2$ with $|s|^2\geq |t|^2+1$, $0<p<\infty$, and
$\alpha>1/2$. Then for all $z\in\C$ we have
\begin{multline*}
\inc\left[Q^{(s,t)}_\alpha \left(\left|K_z\right|\right)(w)\right]^p
\exp\left(-\frac{p|{w}|^2}{2}\right)\,dA(w)\\
= C\,\left|\exp\left(\frac{-p{s}t(|s|^2-|t|^2-1){z}^2}
{2c_{s,t}(\alpha)}\right)\right|\exp\left(\frac{p|s|^2(2\alpha|s|^2
-2\alpha|t|^2-1)|{z}|^2}{2c_{s,t}(\alpha)}\right),
\end{multline*}
where
$$c_{s,t}(\alpha)=(|s|^2-|t|^2-1)(4\alpha^2|s|^2-|t|^2)+
4\alpha^2 |s|^2-4\alpha |s|^2+|t|^2+1>0$$
and
$$C=\frac{(2\pi)^{1+p}|s|^{1+p/2}(4\alpha^2|s|^2
-|t|^2)^{(1-p)/2}}{p\sqrt{c_{s,t}(\alpha)}}.$$
\end{proposition}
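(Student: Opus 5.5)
The plan is to evaluate the inner integral exactly with Lemma~\ref{lemiTK}, and then do one more Gaussian integral in $w$ using \eqref{eqIC}. The weight in the definition of $Q^{(s,t)}_\alpha$ is of course $\exp(-\alpha|w|^2)$ in the integration variable. Since $|s|^2\ge|t|^2+1$, we have $|t|<|s|$. Together with $\alpha>1/2$ this gives $\alpha>|t|/(2|s|)$, so $D:=4\alpha^2|s|^2-|t|^2>0$. The second identity of Lemma~\ref{lemiTK} then applies verbatim and gives, for every $w$,
$$Q^{(s,t)}_\alpha(|K_z|)(w)=\frac{2\pi\sqrt{|s|}}{\sqrt D}\left|\exp\left(\frac{\overline s t}{2|s|^2}w^2-\frac{\overline s t(w+sz)^2}{2|s|^2D}\right)\right|\exp\left(\frac{\alpha|w+sz|^2}{D}\right).$$

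Raising this to the power $p$ and multiplying by $e^{-p|w|^2/2}$ gives an integrand of the form
$$\exp\Big(-\gamma|w|^2+\re(Aw^2+Bw)+\re(E)+\kappa|z|^2\Big).$$
Here $\gamma=p\big(\tfrac12-\tfrac{\alpha}{D}\big)$, and $A=\tfrac{p\overline s t}{2|s|^2}\big(1-\tfrac1D\big)$. The term $B$ is linear in $z$: it collects $2\alpha p\,\re(w\overline{sz})/D$ and the cross term $-p\overline s t\cdot 2swz/(2|s|^2D)$. Finally $\re(E)+\kappa|z|^2$ is the $w$-independent part, with $\kappa=p\alpha|s|^2/D$ and $E=-p\overline s t s^2z^2/(2|s|^2D)$.

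To integrate, I would write $|e^{g(w)}|^{p}e^{-p|w|^2/2}$ as $e^{h(w)}\overline{e^{h(w)}}$ with $h$ holomorphic and quadratic, and rescale $w$ so that the Gaussian weight becomes $d\lambda$. Then \eqref{eqIC} (with $\delta=\gamma$ after rescaling) gives the result. Equivalently one can use the real Gaussian formula
$$\int_\C e^{-\gamma|w|^2+\re(Aw^2+Bw)}\,dA(w)=\frac{\pi}{\sqrt{\gamma^2-|A|^2}}\exp\left(\frac{\gamma|B|^2+\re(\overline A B^2)}{4(\gamma^2-|A|^2)}\right),$$
valid when $\gamma>|A|$.

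The key check is that $\gamma>|A|$ is exactly the positivity of $c_{s,t}(\alpha)$, after clearing the factor $p/(2D)$. I expect $\gamma^2-|A|^2$ to be proportional to $c_{s,t}(\alpha)/D^2$ (times $D-2\alpha$-type factors that cancel), which is where the $\sqrt{c_{s,t}(\alpha)}$ and the power of $D$ in $C$ come from. Positivity follows from the rearrangement
$$c_{s,t}(\alpha)=(|s|^2-|t|^2-1)(D-1)+|s|^2(2\alpha-1)^2.$$
Here $D-1>|s|^2-|t|^2-1\ge0$ because $4\alpha^2>1$, and the last term is strictly positive since $\alpha>1/2$.

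The main obstacle is the bookkeeping in the final exponent. One has to add the $w$-independent terms to the completed-square contribution and verify that everything collapses to the stated form: the $|z|^2$ coefficient should become $p|s|^2(2\alpha|s|^2-2\alpha|t|^2-1)/(2c_{s,t}(\alpha))$, and the $z^2$ term should become $\re\big(-ps t(|s|^2-|t|^2-1)z^2/(2c_{s,t}(\alpha))\big)$. I would organize this by substituting $u=w+sz$ before integrating. Then the only $z$-dependence outside the Gaussian is through $\re(\overline s t\, (u-sz)^2)$ and $|u-sz|^2$, and the completed square is a short algebraic identity in $|s|^2,|t|^2,\alpha,D$. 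Collecting the prefactors $(2\pi\sqrt{|s|}/\sqrt D)^p$, $\pi/\sqrt{\gamma^2-|A|^2}$ and the Jacobian should then produce the constant $C$.
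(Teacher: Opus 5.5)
Your proposal follows the paper's proof: the second identity of Lemma~\ref{lemiTK} gives $Q^{(s,t)}_\alpha(|K_z|)(w)$ in closed form, and the remaining $w$-integral is a Gaussian evaluated through \eqref{eqIC} (your real-variable form of that formula is correct). With $D=4\alpha^2|s|^2-|t|^2$, the identity $|s|^2(D-2\alpha)^2-|t|^2(D-1)^2=D\,c_{s,t}(\alpha)$ gives $\gamma^2-|A|^2=p^2c_{s,t}(\alpha)/(4|s|^2D)$, not the $c_{s,t}(\alpha)/D^2$ scaling you guessed, and after that the bookkeeping does collapse to the stated exponent and constant $C$. Your decomposition $c_{s,t}(\alpha)=(|s|^2-|t|^2-1)(D-1)+|s|^2(2\alpha-1)^2$ is a tidier positivity proof than the paper's vertex argument. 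Note, however, that $c_{s,t}(\alpha)>0$ gives only $\gamma^2>|A|^2$, so you also need $\gamma>0$, i.e.\ $D-2\alpha\ge(4\alpha^2-1)|t|^2+2\alpha(2\alpha-1)>0$, which the paper checks separately.
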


\begin{proof}
Since $\alpha>1/2>|t|/(2|s|)$, it follows from
Lemma \ref{lemiTK} that
$$Q^{(s,t)}_\alpha\left(\left|K_z\right|\right)(w)=
\frac{2\pi\sqrt{|s|}}{\sqrt{4\alpha^2|s|^2-|t|^2}}f(z)g_z(w),$$
where
$$f(z):=\left|\exp\left(\frac{-{s}{t}z^{2}}{2(4\alpha^2|s|^2-
|t|^2)}\right)\right|\exp\left(\frac{\alpha|s|^2|z|^2}{4\alpha^2
|s|^2-|t|^2}\right)$$
and
$$g_z(w):=\left|\exp\left(\frac{\overline{s}{t}(4\alpha^2|s|^2
-|t|^2-1)w^{2}}{2|s|^2(4\alpha^2|s|^2-|t|^2)}+
\frac{(2\alpha\overline{s}\overline{z}-{t}z) w}{4\alpha^2|s|^2
-|t|^2}\right)\right|\exp\left(\frac{\alpha|{w}|^2}{4\alpha^2|s|^2
-|t|^2}\right).$$
It is easy to check that
$$4\alpha^2|s|^2-|t|^2-2\alpha>0,\qquad \alpha>1/2.$$
Then by a simple change of variables, we see that{\small
\begin{align*}
&\inc\left[g_z(w)\right]^p\exp\left(\frac{-p|{w}|^2}{2}\right)\,dA(w)\\
&=C_1\inc\left|\exp\left(\frac{\overline{s}{t}(4\alpha^2|s|^2-|t|^2-1)
w^{2}}{|s|^2(4\alpha^2|s|^2-|t|^2-2\alpha)}+
\frac{\sqrt{2p}(2\alpha \overline{s}\overline{z}-{t}z)w}
{\sqrt{(4\alpha^2|s|^2-|t|^2)(4\alpha^2|s|^2-|t|^2-2\alpha)}}
\right)\right|\,d\lambda(w),
\end{align*}
}where
$$C_1=2\pi(4\alpha^2|s|^2-|t|^2)/[p(4\alpha^2|s|^2-|t|^2-2\alpha)].$$

Let us rewrite $c_{s,t}(\alpha)$ as a quadratic function of $\alpha$,
\begin{equation}\label{eqcst}
c_{s,t}(\alpha)=4|s|^2(|s|^2-|t|^2)\alpha^2-
4|s|^2 \alpha-(|s|^2-|t|^2)|t|^2+2|t|^2+1.
\end{equation}
Since
$$\frac{1}{2(|s|^2-|t|^2)}\leq \frac{1}{2},\qquad c_{s,t}\left(\frac{1}{2}
\right)=(|s|^2-|t|^2-1)^2\geq 0,$$
we must have
\begin{equation}\label{eqfaiphag0}
c_{s,t}(\alpha)>0, \qquad \alpha>1/2.
\end{equation}
A few lines of elementary calculations show that
\begin{equation*}
{|s|^2(4\alpha^2|s|^2-|t|^2-2\alpha)^2-|t|^2(4\alpha^2|s|^2
-|t|^2-1)^2}=(4\alpha^2|s|^2-|t|^2)c_{s,t}(\alpha).
\end{equation*}
Combining this with \eqref{eqfaiphag0}, we see that
$$1-\frac{|t|^2(4\alpha^2|s|^2-|t|^2-1)^2}{|s|^2(4\alpha^2|s|^2
-|t|^2-2\alpha)^2}=\frac{(4\alpha^2|s|^2-|t|^2)c_{s,t}(\alpha)}
{|s|^2(4\alpha^2|s|^2-|t|^2-2\alpha)^2}>0.$$
Then by \eqref{eqIC}, we have
\begin{align*}
&\inc\left|\exp\left(\frac{\overline{s}{t}(4\alpha^2|s|^2-|t|^2-1)
w^{2}}{2|s|^2(4\alpha^2|s|^2-|t|^2-2\alpha)}+\frac{\sqrt{p}
(2\alpha \overline{s}\overline{z}-{t}z)w}{\sqrt{2(4\alpha^2|s|^2
-|t|^2)(4\alpha^2|s|^2-|t|^2-2\alpha)}}\right)\right|^2
d\lambda(w)\\[8pt]
&=C_2\,\left|\exp\left(\frac{p s\overline{t}(4\alpha^2|s|^2
-|t|^2-1)(2\alpha \overline{s}\overline{z}-{t}z)^2}
{2(4\alpha^2|s|^2-|t|^2)^2c_{s,t}(\alpha)}\right)\right|\\[8pt]
&\qquad\qquad\cdot\exp\left(\frac{p|s|^2(4\alpha^2|s|^2-|t|^2
-2\alpha)|2\alpha \overline{s}\overline{z}-{t}z|^2}
{2(4\alpha^2|s|^2-|t|^2)^2c_{s,t}(\alpha)}\right),
\end{align*}
where
$$C_2=\frac{|s|(4\alpha^2|s|^2-|t|^2-2\alpha)}
{\sqrt{(4\alpha^2|s|^2-|t|^2)c_{s,t}(\alpha)}}.$$
A direct calculation shows that
\begin{align*}
&({s}{t}|t|^2 +4\alpha^2{s}{t}|s|^2)(4\alpha^2|s|^2-|t|^2-1)
-4\alpha{s}{t}|s|^2(4\alpha^2|s|^2-|t|^2-2\alpha)\\
&\qquad\qquad={s}{t} (4\alpha^2|s|^2-|t|^2)(4\alpha^2 |s|^2
-4\alpha |s|^2+|t|^2+1)
\end{align*}
and
\begin{align*}
&|s|^2(|{t}|^2+4\alpha^2|s|^2)(4\alpha^2|s|^2-|t|^2
-2\alpha)-4\alpha|s|^2|t|^2(4\alpha^2|s|^2-|t|^2-1)\\
&\qquad\qquad=|s|^2 (4\alpha^2|s|^2-|t|^2)(4\alpha^2 |s|^2
-4\alpha |t|^2+|t|^2-2\alpha).
\end{align*}
We arrive at
\begin{align*}
\inc&\left[g_z(w)\right]^p\exp\left(\frac{-p|{w}|^2}{2}
\right)\,dA(w)\\[8pt]
&= C_1C_2\, \left|\exp\left(\frac{p{s}t(4\alpha^2 |s|^2
-4\alpha |s|^2+|t|^2+1)}{2(4\alpha^2|s|^2-|t|^2)
c_{s,t}(\alpha)}{z}^2\right)\right|\\[8pt]
&\quad\qquad\cdot\exp\left(\frac{p|s|^2(4\alpha^2|s|^2
-4\alpha |t|^2+|t|^2-2\alpha)}{2(4\alpha^2|s|^2-|t|^2)
c_{s,t}(\alpha)}|{z}|^2\right).
\end{align*}
Consequently,
\begin{align*}
\inc&\left[Q^{(s,t)}_\alpha \left(\left|K_z\right|\right)(w)\right]^p
\exp\left(-\frac{p|{w}|^2}{2}\right)\,dA(w)\\[8pt]
&=\left[\frac{2\pi\sqrt{|s|}}{\sqrt{4\alpha^2|s|^2-|t|^2}}
\right]^p[f(z)]^p\,\inc \left[g_z(w)\right]^p
\exp\left(\frac{-p|{w}|^2}{2}\right)\,dA(w)\\[8pt]
&=C\, \left|\exp\left(\frac{-p{s}t(|s|^2-|t|^2-1){z}^2}
{2c_{s,t}(\alpha)}\right)\right|\exp\left(\frac{p|s|^2
(2\alpha|s|^2-2\alpha|t|^2-1)|{z}|^2}{2c_{s,t}(\alpha)}\right).
\end{align*}
This proves the desired result.
\end{proof}

We are now ready to prove the main result of this section.

\begin{theorem}\label{thmTanorml}
Let $(s, t)\in\C^2$ with $|s|^2\ge|t|^2+1$, $0<p\leq\infty$, and
$\varphi\in L^\infty(\C, d\lambda_{\beta})$ with $\beta<1/2$.
Then for any $z\in\C$,
$$\exp\left(-\frac{1}{2}|z|^2\right)\left\|T_\varphi^{(s,t)}
K_z\right\|_{p}\leq C_{\beta} \exp\left(-\frac{H(\beta)}
{2c_{s,t}(1-\beta)}|{z}|^2\right),$$
where $c_{s,t}(\cdot)$ is the quadratic function defined in
Proposition \ref{propNTKap},
$$H(\beta)=4|s|^2(|s|^2-|t|^2)\beta^2-2|s|^2(3|s|^2
-3|t|^2-2)\beta+(|s|^2-|t|^2-1)(2|s|^2-|s||t|-|t|^2-1),$$
and
$$C_{\beta}=\frac{2\|\varphi\|_{\infty,\beta}\sqrt{|s|}}
{\sqrt{4(1-\beta)^2|s|^2-|t|^2}}\left(\frac{|s|\sqrt{4(1-\beta)^2
|s|^2-|t|^2}}{\sqrt{c_{s,t}(1-\beta)}}\right)^{1/p}.$$
\end{theorem}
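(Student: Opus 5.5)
The argument reduces everything to Proposition~\ref{propNTKap} with $\alpha=1-\beta$. The hypothesis $\beta<1/2$ is exactly what makes $\alpha>1/2$.

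\emph{Pointwise domination.} Write $T_\varphi^{(s,t)}K_z(w)=\frac1\pi\inc K^{(s,t)}(w,v)\varphi(v)K_z(v)e^{-|v|^2}\,dA(v)$. Since $|\varphi(v)|\le \|\varphi\|_{\infty,\beta}e^{\beta|v|^2}$, we get
$$\left|T_\varphi^{(s,t)}K_z(w)\right|\le \frac{\|\varphi\|_{\infty,\beta}}{\pi}\,Q^{(s,t)}_{1-\beta}(|K_z|)(w).$$
The integral converges absolutely by Lemma~\ref{lemiTK}, because $1-\beta>1/2\ge |t|/(2|s|)$. So the whole problem reduces to estimating the $F^p$-type quantity of $Q^{(s,t)}_{1-\beta}(|K_z|)$.

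\emph{Case $0<p<\infty$.} Multiply the domination by $e^{-|w|^2/2}$, raise to the $p$-th power, integrate against $\frac{p}{2\pi}dA(w)$, and apply Proposition~\ref{propNTKap} with $\alpha=1-\beta$. This gives $\|T_\varphi^{(s,t)}K_z\|_p^p$ bounded by a constant times
$$\left|\exp\left(-\frac{pst(|s|^2-|t|^2-1)z^2}{2c}\right)\right|\exp\left(\frac{p|s|^2(2\alpha(|s|^2-|t|^2)-1)|z|^2}{2c}\right),\qquad c=c_{s,t}(1-\beta).$$
Bound the first factor by $\exp\bigl(p|s||t|(|s|^2-|t|^2-1)|z|^2/(2c)\bigr)$, which is legitimate since $|s|^2-|t|^2-1\ge0$. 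Then take $p$-th roots and multiply by $e^{-|z|^2/2}$. The exponent of $e^{|z|^2}$ becomes
$$\frac{|s|^2(2(1-\beta)(|s|^2-|t|^2)-1)+|s||t|(|s|^2-|t|^2-1)-c_{s,t}(1-\beta)}{2c_{s,t}(1-\beta)}.$$
Expand $c_{s,t}(1-\beta)$ using \eqref{eqcst} and verify that the numerator equals $-H(\beta)$. The $\beta^2$ terms match $4|s|^2(|s|^2-|t|^2)$ immediately. Collecting the linear terms gives $-2|s|^2(3|s|^2-3|t|^2-2)\beta$, and the constant should factor as $(|s|^2-|t|^2-1)(2|s|^2-|s||t|-|t|^2-1)$. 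This algebraic identity is the main bookkeeping step and the place most likely to go wrong. The constant comes out as $\frac{\|\varphi\|_{\infty,\beta}}{\pi}\left(\frac{p}{2\pi}C\right)^{1/p}$, with $C$ from Proposition~\ref{propNTKap}. Simplifying $(2\pi)^{1+p}|s|^{1+p/2}(4\alpha^2|s|^2-|t|^2)^{(1-p)/2}/\sqrt{c}$ should yield exactly $C_\beta$. The factor $2\sqrt{|s|}/\sqrt{4\alpha^2|s|^2-|t|^2}$ comes out of the $p$-th root, and the remainder is $\bigl(|s|\sqrt{4\alpha^2|s|^2-|t|^2}/\sqrt{c}\bigr)^{1/p}$.

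\emph{Case $p=\infty$.} Use the explicit formula of Lemma~\ref{lemiTK} directly: $Q^{(s,t)}_{\alpha}(|K_z|)(w)=\frac{2\pi\sqrt{|s|}}{\sqrt{4\alpha^2|s|^2-|t|^2}}f(z)g_z(w)$, with $f$ and $g_z$ as in the proof of Proposition~\ref{propNTKap}. Then maximize $g_z(w)e^{-|w|^2/2}$ over $w$; this is a Gaussian-type quadratic form in $(w,\bar w)$. The supremum should reproduce the same exponent, with the constant having $1/p=0$, consistent with $C_\beta$. Alternatively, one can use the standard estimate $\|F\|_\infty\lesssim\|F\|_{p}$ for entire $F$ and accept a constant. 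A cleaner route is to apply the $p<\infty$ bound and let $p\to\infty$, using $\|F\|_\infty=\lim_{p\to\infty}\|F\|_p$ for $F\in\bigcap_p F^p$. This works because the exponent is independent of $p$ and $C_\beta$ depends continuously on $1/p$, so I would take that route.
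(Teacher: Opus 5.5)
Your proposal is correct and follows the paper's proof step for step. The paper also dominates $|T_\varphi^{(s,t)}K_z|$ pointwise by $\frac{\|\varphi\|_{\infty,\beta}}{\pi}Q^{(s,t)}_{1-\beta}(|K_z|)$, applies Proposition~\ref{propNTKap} with $\alpha=1-\beta$, bounds the $z^2$-factor using $|e^{\zeta}|\le e^{|\zeta|}$, and rewrites the exponent as $-H(\beta)/(2c_{s,t}(1-\beta))$; that identity and your constant both check out. For $p=\infty$ the paper uses the sharp inequality $\|F\|_\infty\le\|F\|_q$ (Corollary 2.8 of \cite{Zhu1}) for every $q<\infty$ and implicitly lets $q\to\infty$ to get $1/p=0$ in $C_\beta$, which amounts to your limit argument.
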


\begin{proof}
We first assume $0<p<\infty$. For $\varphi\in
L^\infty(\C, d\lambda_{\beta})$ with $\beta<1/2$, it is clear that
\begin{align*}
&\left\|T_\varphi^{(s,t)}K_z\right\|_{p}^p\\
&\quad\leq \frac{p}{2\pi}\inc\left[\inc\left|K^{(s,t)}(w,v)
\right||\varphi(v)| |K_z(v)|d\lambda(v)\right]^p
\exp\left(-\frac{p|{w}|^2}{2}\right)\,dA(w)\\
&\quad\leq\frac{p\|\varphi\|_{\infty,\beta}^p}{2\pi^{1+p}}
\inc\left[Q^{(s,t)}_\alpha \left(\left|K_z\right|\right)(w)\right]^p
\exp\left(-\frac{p|{w}|^2}{2}\right)\,dA(w),
\end{align*}
where $\alpha=1-\beta>1/2$. Note that
\begin{multline*}
\frac{|s|^2(2\alpha|s|^2-2\alpha|t|^2-1)+|s||t|(|s|^2-|t|^2-1)}
{2c_{s,t}(\alpha)}-\frac{1}{2}\\[8pt]
=-\frac{(|s|^2-|t|^2-1)(4\alpha^2|s|^2-2\alpha|s|^2
-|s||t|-|t|^2-1)+2(2\alpha^2-3\alpha+1)|s|^2}{2c_{s,t}(\alpha)}.
\end{multline*}
By Proposition \ref{propNTKap}, we have{\small
\begin{multline*}
\exp\left(-\frac{1}{2}|z|^2\right)\left\|T_\varphi^{(s,t)}
K_z\right\|_{p}\\[8pt]
\leq  C_{\beta}\exp\left(-\frac{(|s|^2-|t|^2-1)(4\alpha^2|s|^2
-2\alpha|s|^2-|s||t|-|t|^2-1)+2(2\alpha^2-3\alpha+1)|s|^2}
{2c_{s,t}(\alpha)}|{z}|^2\right).
\end{multline*}
}Since
\begin{multline*}
(|s|^2-|t|^2-1)(4\alpha^2-2\alpha)+2(2\alpha^2-3\alpha+1)\\
=(2\alpha-2)\left[(|s|^2-|t|^2)(2\alpha-2)+(3|s|^2-3|t|^2-2)
\right]+2(|s|^2-|t|^2-1)
\end{multline*}
and $\beta=1-\alpha$, we have
$$\exp\left(-\frac{1}{2}|z|^2\right)\left\|T_\varphi^{(s,t)}
K_z\right\|_{p}\leq C_{\beta} \,\exp\left(-\frac{H(\beta)}
{2c_{s,t}(1-\beta)}|{z}|^2\right).$$
This proves the desired result for the case $0<p<\infty$.

Next we assume $p=\infty$. Since $T_\varphi^{(s,t)}K_z\in F^q$
for any $0<q<\infty$, it is clear from Corollary 2.8 of \cite{Zhu1} that
$$\left\|T_\varphi^{(s,t)}K_z\right\|_{\infty}\leq
\left\|T_\varphi^{(s,t)}K_z\right\|_{q}.$$
Combining this with what has been proved, we obtain the
desired results for $p=\infty$.
\end{proof}

The following lemma tells us how the constant $\beta_{0}$ in
Theorem A is determined.

\begin{lemma}\label{lemquadr}
Let $(s, t)\in\C^2$ with $|s|^2\ge|t|^2+1$ and let $H(\beta)$ be the
quadratic function defined in Theorem~\ref{thmTanorml}. Then
$H(\beta)\geq0$ on $(-\infty,1/2)$ if and only if
$$\beta\leq\beta_{0}:=\frac{|s|^2-|t|^2-1}{2|s|(|s|+|t|)}.$$
\end{lemma}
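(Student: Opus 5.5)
The plan is to factor $H$ explicitly and read off its sign. Throughout, write $a=|s|$, $b=|t|$ and $D=a^2-b^2$. The hypothesis $|s|^2\ge|t|^2+1$ gives $D\ge1$, and in particular $a>b\ge0$. Note also that $2a^2-ab-b^2=(a-b)(2a+b)$.

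\textbf{Step 1: the factorization.} I claim that
$$H(\beta)=\bigl(2a(a+b)\beta-(D-1)\bigr)\bigl(2a(a-b)\beta-K\bigr),\qquad K=2a^2-ab-b^2-1.$$
To verify this, compare coefficients.
\begin{itemize}
\item The leading coefficient of the product is $4a^2(a+b)(a-b)=4a^2D$, as required.
\item The constant term is $(D-1)K$, which is exactly the constant term of $H$.
\item The linear coefficient of the product is $-2a\bigl[(a+b)K+(a-b)(D-1)\bigr]$. Expanding, $(a+b)(2a^2-ab-b^2)=2a^3+a^2b-2ab^2-b^3$ and $(a-b)(a^2-b^2)=a^3-a^2b-ab^2+b^3$. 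Their sum is $3a^3-3ab^2=3aD$. The remaining terms contribute $-\bigl[(a+b)+(a-b)\bigr]=-2a$. So the bracket equals $3aD-2a$, and the linear coefficient is $-2a^2(3D-2)$, matching $H$.
\end{itemize}
This coefficient comparison is the only real computation, and it is routine once the factor $2a(a+b)\beta-(D-1)$ has been guessed from the value of $\beta_0$.

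\textbf{Step 2: the two roots.} The roots of $H$ are $\beta_0=(D-1)/(2a(a+b))$ and $\beta_1=K/(2a(a-b))$. The condition $\beta_1\ge 1/2$ is equivalent to $K\ge a(a-b)$. Since $K-a^2+ab=a^2-b^2-1=D-1\ge0$, we get $\beta_1\ge1/2$. Moreover $\beta_0<1/2$, because $D-1<a^2-b^2\le a(a+b)$.

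\textbf{Step 3: the sign of $H$ on $(-\infty,1/2)$.} Take $\beta<1/2\le\beta_1$. Since $a-b>0$, the second factor satisfies $2a(a-b)\beta-K<0$. Therefore $H(\beta)\ge0$ exactly when the first factor is $\le 0$, that is, when $\beta\le\beta_0$. This is the assertion of the lemma.

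The only delicate point is keeping the inequalities non-strict in the boundary case $|s|^2=|t|^2+1$. There $\beta_0=0$ and $\beta_1=1/2$, and the argument above goes through unchanged, because $\beta<1/2$ is strict.
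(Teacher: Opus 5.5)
Your proof is correct and follows the same route as the paper, which also identifies the roots $\beta_0$ and $\beta_1=\frac{2|s|^2-|s||t|-|t|^2-1}{2|s|(|s|-|t|)}$ of $H$ and reads off the sign from $\beta_0<1/2\le\beta_1$. Your explicit factorization and your verification of these root inequalities supply details the paper only asserts as ``clearly.''
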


\begin{proof}
Clearly, the solutions of the quadratic equation $H(\beta)=0$ are given by
$$\beta_{0}=\frac{|s|^2-|t|^2-1}{2|s|(|s|+|t|)},\qquad
\beta_{1}=\frac{2|s|^2-|s||t|-|t|^2-1}{2|s|(|s|-|t|)}.$$
Since $0\leq\beta_0<1/2\leq\beta_{1}$ and
$$H(0)=(|s|^2-|t|^2-1)[|s|(|s|-|t|)+|s|^2-|t|^2-1]\geq 0,$$
we conclude that $H(\beta)\geq 0$ on $(-\infty,1/2)$ if and only
if $\beta\leq\beta_0$.
\end{proof}

We are now ready to prove Theorem A.

\begin{proof}[Proof of Theorem A]
We first assume $\varphi\in L^\infty(\C, d\lambda_{\beta_0})$. The proof of
Lemma~\ref{lemquadr} shows that $H(\beta_0)=0$. It then follows from Theorem~\ref{thmTanorml} that
$$\exp\left(-\frac{1}{2}|z|^2\right)\left\|T_\varphi^{(s,t)}K_z\right\|_{p}
\leq C_{\beta_0}, \qquad 0<p\leq \infty,\quad z\in\C.$$

Now assume $\varphi\in L^\infty(\C, d\lambda_{\beta})$ with
$\beta<\beta_0$. It is clear from the proof of Lemma~\ref{lemquadr} that
$H(\beta)>0$ for $\beta<\beta_0$. Combining this with
Theorem~\ref{thmTanorml}, we obtain
$$\exp\left(-\frac{1}{2}|z|^2\right)\left\|T_\varphi^{(s,t)}K_z\right\|_{p}
\leq C_{\beta} \exp\left(-\frac{\delta}{2}|{z}|^2\right),$$
where $\delta={H(\beta)}/{c_{s,t}(1-\beta)}$.
Recall from \eqref{eqcst} that
$$c_{s,t}(1-\beta)=4|s|^2(|s|^2-|t|^2)(1-\beta)^2-4|s|^2
(1-\beta)-(|s|^2-|t|^2)|t|^2+2|t|^2+1.$$
Since $\beta<\beta_0<1/2$ and $|s|^2\geq|t|^2+1$, an elementary
calculation shows
\begin{align*}
H(\beta)&-c_{s,t}(1-\beta)\\
&=2|s|^2(|s|^2-|t|^2)\beta-|s||t|(|s|^2-|t|^2-1)
+|s|^2-2|s|^2(|s|^2-|t|^2)\\
&< -(|s|^2+|s||t|)(|s|^2-|t|^2-1)\leq0,
\end{align*}
which implies $\delta\in(0,1)$.

The sharpness of the constant $\beta_0$ in Theorem A will be shown in Section 4. See Remark~\ref{remsharp} for more details.
This
completes the proof of the theorem.
\end{proof}

\section{Boundedness and compactness of Toeplitz-type operators}

In this section, we consider the boundedness and compactness of the
Toeplitz-type operators $T_{\varphi}^{(s,t)}$ from $F^p$ to $F^q$
for all $0< p, q \leq\infty$. Our analysis is based on estimates
and computations with the integral kernel of $T_{\varphi}^{(s,t)}$.
We first need the following critical integral estimate.

\begin{lemma}\label{lemexpc2}
Let $(s, t)\in\C^2$ with $|s|^2\geq |t|^2+1$,
$$\gamma_0:=\frac{|s|^2-|t|^2-1}{2|s|\left(|s|
+\sqrt{|t|^2+1}\right)}\geq0,$$
and $\varphi\in L^\infty(\C, d\lambda_\gamma)$ with
$\gamma<(|s|-|t|)/(2|s|)$. Then
\begin{multline*}
\inc\left|K^{(s,t)}(z,w)\right|\,|\varphi(w)|
\exp\left(-\frac{|z|^2+|w|^2}{2}\right)\,dA(w)\\
\leq C_{\gamma}\, \|\varphi\|_{\infty,\gamma}
\exp\left(-\frac{G(\gamma)}{2|s|\left[(1-2\gamma)^2|s|^2
-|t|^2\right]}|{z}|^2\right),
\end{multline*}
where $C_\gamma={2\pi\sqrt{|s|}}/{\sqrt{(1-2\gamma)^2|s|^2-|t|^2}}$,
$$G(\gamma)=4|s|^2(|s|-|t|)\gamma^2-2|s|(2|s|^2-2|s||t|-1)\gamma
+(|s|^2-|t|^2-1)(|s|-|t|)$$
for $\gamma\leq \gamma_0$, and
$$G(\gamma)=4|s|^2(|s|+|t|)\gamma^2-2|s|(2|s|^2+2|s||t|-1)
\gamma+(|s|^2-|t|^2-1)(|s|+|t|)$$
for $\gamma_0<\gamma< (|s|-|t|)/(2|s|)$.
\end{lemma}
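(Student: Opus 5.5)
The plan is to reduce the integral to the second identity of Lemma~\ref{lemiTK} taken at the point $0$, and then to bound the resulting Gaussian exponent uniformly in the argument of $z$.

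\textbf{Reduction to Lemma~\ref{lemiTK}.} Since $\varphi\in L^\infty(\C,d\lambda_\gamma)$, we have $|\varphi(w)|\le\|\varphi\|_{\infty,\gamma}e^{\gamma|w|^2}$ almost everywhere. The integral is therefore at most
$$\|\varphi\|_{\infty,\gamma}\,e^{-|z|^2/2}\inc\left|K^{(s,t)}(z,w)K_0(w)\right|e^{-\alpha|w|^2}\,dA(w),\qquad \alpha=\frac{1-2\gamma}{2},$$
because $K_0\equiv1$. The hypothesis $\gamma<(|s|-|t|)/(2|s|)$ is exactly $\alpha>|t|/(2|s|)$, so the second equality of Lemma~\ref{lemiTK} applies with the roles $(w,z)\mapsto(z,0)$. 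Set
$$D=4\alpha^2|s|^2-|t|^2=(1-2\gamma)^2|s|^2-|t|^2>0 .$$
The prefactor is then $2\pi\sqrt{|s|}/\sqrt D=C_\gamma$. The whole bound becomes $C_\gamma\|\varphi\|_{\infty,\gamma}$ times
$$\exp\left(\re\Big(\frac{\overline s t\,(D-1)}{2|s|^2D}z^2\Big)+\frac{\alpha|z|^2}{D}-\frac{|z|^2}{2}\right).$$

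\textbf{Bounding the exponent.} I will use the elementary bound $\re(\overline s t(D-1)z^2)\le|s||t||D-1||z|^2$, which is attained for suitable $\arg z$, so nothing is lost. The sign of $D-1$ then produces the two cases of the lemma. Note that
$$\gamma_0=\frac{|s|-\sqrt{|t|^2+1}}{2|s|},$$
so $D\ge1$ is equivalent to $(1-2\gamma)|s|\ge\sqrt{|t|^2+1}$, i.e.\ to $\gamma\le\gamma_0$. In either case the coefficient of $|z|^2$ in the exponent equals
$$-\frac{|s|D-|s|(1-2\gamma)\mp|t|(D-1)}{2|s|D},$$
where the minus sign is taken when $D\ge1$ and the plus sign when $D<1$.

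\textbf{Matching with $G$.} It remains to check that the numerator equals $G(\gamma)$ in each case. For $D\ge1$ the numerator is $(|s|-|t|)D-|s|+|t|+2\gamma|s|$. Substituting $D=(|s|^2-|t|^2)-4\gamma|s|^2+4\gamma^2|s|^2$ and expanding gives
$$4|s|^2(|s|-|t|)\gamma^2-2|s|(2|s|^2-2|s||t|-1)\gamma+(|s|^2-|t|^2-1)(|s|-|t|),$$
which is the first formula for $G$. Replacing $|t|$ by $-|t|$ throughout gives the second formula, valid for $\gamma_0<\gamma<(|s|-|t|)/(2|s|)$.

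The only real obstacle is this algebraic bookkeeping: identifying $\gamma_0$ as the threshold $D=1$ and keeping the signs straight when expanding. All the analysis is already contained in Lemma~\ref{lemiTK}.
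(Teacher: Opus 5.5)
Your proposal is correct and follows the paper's proof essentially step for step. Like the paper, you bound $|\varphi|$ by $\|\varphi\|_{\infty,\gamma}e^{\gamma|w|^2}$, apply the second identity of Lemma~\ref{lemiTK} at the point $0$ with $\alpha=\tfrac12-\gamma$, and bound the real part of the $z^2$ term by its modulus. You then split according to the sign of $4\alpha^2|s|^2-|t|^2-1$, which is exactly $\gamma\le\gamma_0$ versus $\gamma>\gamma_0$, and your expansion identifying the numerator with each branch of $G$ checks out.
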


\begin{proof}
Let $\alpha=1/2-\gamma$. Then $\gamma<(|s|-|t|)/(2|s|)$ implies
$\alpha>{|t|}/{(2|s|)}$. By Lemma~\ref{lemiTK},
\begin{multline*}
\inc\left|K^{(s,t)}(z, w)\right|\exp\left(-\alpha|w|^2\right)\,dA(w)\\
=C_\gamma \,\left|\exp\left(\frac{\overline{s}{t}(4\alpha^2|s|^2
-|t|^2-1)z^{2}}{2|s|^2(4\alpha^2|s|^2-|t|^2)}\right)\right|
\exp\left(\frac{\alpha|{z}|^2}{4\alpha^2|s|^2-|t|^2}\right).
\end{multline*}

First consider the case when $\gamma\leq\gamma_0$. Then
$\alpha\geq{\sqrt{|t|^2+1}}/{(2|s|)}$, which implies that
$4\alpha^2|s|^2-|t|^2-1\geq0$. Therefore,
\begin{multline}\label{eqc1}
\exp\left(\frac{-|{z}|^2}{2}\right)\inc\left|K^{(s,t)}(z, w)\right|
\exp\left(-\alpha|w|^2\right)\,dA(w)\\
\leq C_\gamma\,\exp\left(-\frac{|s|(4\alpha^2|s|^2-|t|^2-2\alpha)
-|t|(4\alpha^2|s|^2-|t|^2-1)}{2|s|(4\alpha^2|s|^2-|t|^2)}|{z}|^2\right).
\end{multline}
Note that
\begin{multline*}
|s|(4\alpha^2|s|^2-|t|^2-2\alpha)-|t|(4\alpha^2|s|^2-|t|^2-1)\\
=4|s|^2(|s|-|t|)\gamma^2-2|s|(2|s|^2-2|s||t|-1)\gamma+(|s|^2
-|t|^2-1)(|s|-|t|)
\end{multline*}
Consequently, for any $\varphi\in L^\infty(\C, d\lambda_{\gamma})$,
\eqref{eqc1} yields
\begin{align*}
&\inc\left|K^{(s,t)}(z, w)\right||\varphi(w)|
e^{-\frac{|z|^2+|w|^2}{2}}dA(w)\\
&\qquad\leq\|\varphi\|_{\infty,\gamma}\exp\left(\frac{-|{z}|^2}{2}
\right)\inc\left|K^{(s,t)}(z, w)\right|\exp\left(-\frac{1-2\gamma}{2}
|w|^2\right)\,dA(w)\\
&\qquad\leq  C_{\gamma}\,\|\varphi\|_{\infty,\gamma}
\exp\left(-\frac{G(\gamma)}{2|s|\left[(1-2\gamma)^2|s|^2
-|t|^2\right]}|{z}|^2\right).
\end{align*}

Next consider the case when $\gamma_0<\gamma< (|s|-|t|)/(2|s|)$.
Then $$0\leq \frac{|t|}{2|s|}<\alpha<\frac{\sqrt{|t|^2+1}}{2|s|},$$
which implies that $4\alpha^2|s|^2-|t|^2-1<0$. Note that
\begin{multline*}
|s|(4\alpha^2|s|^2-|t|^2-2\alpha)+|t|(4\alpha^2|s|^2-|t|^2-1)\\
=4|s|^2(|s|+|t|)\gamma^2-2|s|(2|s|^2+2|s||t|-1)\gamma
+(|s|^2-|t|^2-1)(|s|+|t|)
\end{multline*}
Then a similar argument shows that
\begin{align*}
&\inc\left|K^{(s,t)}(z, w)\right||\varphi(w)|
e^{-\frac{|z|^2+|w|^2}{2}}dA(w)\\
&\qquad\leq C_\gamma\, \|\varphi\|_{\infty,\gamma}
\exp\left(-\frac{|s|(4\alpha^2|s|^2-|t|^2-2\alpha)+|t|(4\alpha^2|s|^2
-|t|^2-1)}{2|s|(4\alpha^2|s|^2-|t|^2)}|{z}|^2\right)\\
&\qquad\leq  C_{\gamma}\, \|\varphi\|_{\infty,\gamma}
\exp\left(-\frac{G(\gamma)}{2|s|\left[(1-2\gamma)^2|s|^2-|t|^2
\right]}|{z}|^2\right).
\end{align*}
This proves the desired result.
\end{proof}

The following lemma determines the constant $\beta_{0}$ in Theorem B.

\begin{lemma}\label{lemquadrG}
Let $(s, t)\in\C^2$ with $|s|^2\ge|t|^2+1$ and let
$G(\gamma)$ be the piecewise quadratic function defined in
Lemma~\ref{lemexpc2}. Then $G(\gamma)\geq0$ on
$\left(-\infty,(|s|-|t|)/(2|s|)\right)$ if and only if
$$\gamma\leq\beta_{0}:=\frac{|s|^2-|t|^2-1}{2|s|(|s|+|t|)}.$$
\end{lemma}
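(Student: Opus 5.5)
The plan is to compute the roots of each quadratic piece of $G$ explicitly and compare them with $\gamma_0$, $\beta_0$ and the right endpoint $(|s|-|t|)/(2|s|)$. Write $a=|s|$, $b=|t|$ and $D=a^2-b^2-1\ge 0$. Note that $a>b$, since $a^2\ge b^2+1$. So both pieces have positive leading coefficient, and each is nonnegative exactly outside the open interval between its two roots.

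\emph{First piece} ($\gamma\le\gamma_0$). Here
$$G(\gamma)=4a^2(a-b)\gamma^2-2a(2a^2-2ab-1)\gamma+D(a-b).$$
By Vieta, the product of the roots is $D/(4a^2)$ and their sum is $(2a^2-2ab-1)/(2a(a-b))$. The roots are therefore $D/(2a(a-b))$ and $(a-b)/(2a)$. To check this, use $D+(a-b)^2=2a^2-2ab-1$.
\begin{itemize}
\item Since $a+\sqrt{b^2+1}\ge a-b$, we get $\gamma_0\le D/(2a(a-b))$.
\item Since $\gamma_0<1/2$, which needs a routine check, and one can compare, $\gamma_0\le (a-b)/(2a)$ as well.
\end{itemize}
Both roots are thus $\ge\gamma_0$, so $G\ge0$ on the whole first range. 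This matches $\gamma_0\le\beta_0$, where $\beta_0$ is the claimed threshold.

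\emph{Second piece} ($\gamma_0<\gamma<(a-b)/(2a)$). Here
$$G(\gamma)=4a^2(a+b)\gamma^2-2a(2a^2+2ab-1)\gamma+D(a+b).$$
The product of the roots is $D/(4a^2)$ and their sum is $(2a^2+2ab-1)/(2a(a+b))$. Using $D+(a+b)^2=2a^2+2ab-1$, the roots are $\beta_0=D/(2a(a+b))$ and $(a+b)/(2a)$. The larger root exceeds the right endpoint $(a-b)/(2a)$. On this range, therefore, $G\ge0$ iff $\gamma\le\beta_0$, and $G<0$ for $\beta_0<\gamma<(a-b)/(2a)$.

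\emph{Assembling.} We have $\gamma_0\le\beta_0$ because $\sqrt{b^2+1}\ge b$. It follows that $G\ge0$ for every $\gamma\le\beta_0$ in the domain, and $G<0$ for $\beta_0<\gamma<(a-b)/(2a)$. That is the claimed equivalence. As a consistency check, both expressions for $G$ agree at $\gamma_0$: when $4\alpha^2a^2=b^2+1$ each reduces to $a(1-2\alpha)=2a\gamma_0$, which confirms the piecewise definition.

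The only delicate points are the degenerate case $D=0$ and the root/endpoint comparisons. When $D=0$ we have $\gamma_0=\beta_0=0$, and the roots of the two pieces are $0$ and $(a\mp b)/(2a)$, so the argument goes through unchanged. I expect the main obstacle to be the ordering $\gamma_0\le\beta_0<(a-b)/(2a)$, and I will verify it by clearing denominators.
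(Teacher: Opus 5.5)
Your proposal is correct and follows essentially the same route as the paper: you find the explicit roots of each quadratic piece (yours match the paper's $\gamma_1,\gamma_2$ and $\beta_0,\beta_1$) and compare them with $\gamma_0$, $\beta_0$ and the endpoint $(a-b)/(2a)$. One comparison is left vague. The bound $\gamma_0\le (a-b)/(2a)$ does not follow from $\gamma_0<1/2$ alone, because the endpoint is below $1/2$ when $b>0$. It is immediate, however, from the factorization $D=(a-\sqrt{b^2+1})(a+\sqrt{b^2+1})$, which gives $\gamma_0=(a-\sqrt{b^2+1})/(2a)\le (a-b)/(2a)$.
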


\begin{proof}
Let $\gamma_0$ be the number from Lemma~\ref{lemexpc2} and
consider the first form of the piecewise quadratic function
$$G(\gamma)=4|s|^2(|s|-|t|)\gamma^2-2|s|(2|s|^2-2|s||t|-1)\gamma
+(|s|^2-|t|^2-1)(|s|-|t|).$$
The solutions of the quadratic equation $G(\gamma)=0$ are given by
$$\gamma_{1}=\frac{|s|^2-|t|^2-1}{2|s|(|s|-|t|)}\geq0,
\qquad\gamma_{2}=\frac{|s|-|t|}{2|s|}>0.$$
Note that $G(0)=(|s|^2-|t|^2-1)(|s|-|t|)\geq 0$ and
$\gamma_0\leq\min\left\{\gamma_{1}, \gamma_{2}\right\}$.
So in this case, we have $G(\gamma)\geq 0$ for all
$\gamma\leq \gamma_0$.

Next consider the second form of the piecewise quadratic function
$$G(\gamma)=4|s|^2(|s|+|t|)\gamma^2-2|s|(2|s|^2
+2|s||t|-1)\gamma+(|s|^2-|t|^2-1)(|s|+|t|).$$
The solutions of the quadratic equation $G(\gamma)=0$ are given by
$$\beta_0\geq0, \qquad \beta_{1}=\frac{|s|+|t|}{2|s|}>0.$$
Since
$$\gamma_0\leq\beta_{0}<\frac{|s|-|t|}{2|s|}\leq\beta_{1},$$
we see that $G(\gamma)\geq 0$ on $\left(\gamma_0,(|s|-|t|)/(2|s|)\right)$
if and only if $\gamma_0<\gamma\leq \beta_0$.
\end{proof}

The following two results are critical to our proof of the main theorem in
this section. They are also of some independent interest.

\begin{proposition}\label{propc2}
Let $(s, t)\in\C^2$ with $|s|^2\geq |t|^2+1$ and let $\varphi\in
L^\infty(\C, d\lambda_{\beta_0})$ with
$$\beta_0=\frac{|s|^2-|t|^2-1}{2|s|(|s|+|t|)}\geq 0.$$
Then
$$\sup_{z\in\C}\inc\left|K^{(s,t)}(z,w)\right|\,|\varphi(w)|
\exp\left(-\frac{|z|^2+|w|^2}{2}\right)\,dA(w)<\infty.$$
In particular, if $\varphi\in L^\infty(\C, d\lambda_\gamma)$ with
$\gamma<\beta_0$, then
$$\inc\left|K^{(s,t)}(z,w)\right|\,|\varphi(w)|\exp\left(-\frac{|z|^2
+|w|^2}{2}\right)\,dA(w)\leq C\, \exp\left(-\frac{\delta}{2}|z|^2\right),$$
where $C$ and $\delta$ are positive constants independent of $z$.
\end{proposition}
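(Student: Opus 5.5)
This should follow directly from Lemma~\ref{lemexpc2} combined with Lemma~\ref{lemquadrG}; the only real work is checking that the hypotheses of Lemma~\ref{lemexpc2} hold at $\gamma=\beta_0$.

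\textbf{Admissibility of $\beta_0$.} Lemma~\ref{lemexpc2} requires $\gamma<(|s|-|t|)/(2|s|)$, and the proof of Lemma~\ref{lemquadrG} already records that $\beta_0<(|s|-|t|)/(2|s|)$. Directly, this amounts to $|s|^2-|t|^2-1<|s|^2-|t|^2$, since $\beta_0\cdot 2|s|(|s|+|t|)=|s|^2-|t|^2-1$ while $\frac{|s|-|t|}{2|s|}\cdot 2|s|(|s|+|t|)=|s|^2-|t|^2$. So $\gamma=\beta_0$ is admissible. The same then holds for every $\gamma<\beta_0$.

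\textbf{First assertion.} Let $\varphi\in L^\infty(\C,d\lambda_{\beta_0})$ and apply Lemma~\ref{lemexpc2} with $\gamma=\beta_0$. The denominator $2|s|[(1-2\beta_0)^2|s|^2-|t|^2]$ is positive because $1-2\beta_0>|t|/|s|$, which is just a restatement of $\beta_0<(|s|-|t|)/(2|s|)$. Lemma~\ref{lemquadrG} gives $G(\beta_0)\ge0$; in fact $\beta_0$ is a root of the second quadratic form, so $G(\beta_0)=0$ when $\beta_0>\gamma_0$, and $G(\beta_0)\ge 0$ in any case. The exponential factor is therefore at most $1$, and the integral is bounded by $C_{\beta_0}\|\varphi\|_{\infty,\beta_0}$ uniformly in $z$.

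\textbf{Second assertion.} Let $\varphi\in L^\infty(\C,d\lambda_\gamma)$ with $\gamma<\beta_0$. Again apply Lemma~\ref{lemexpc2} at $\gamma$; what is needed is $G(\gamma)>0$, and this is the only step that needs some care.
\begin{itemize}
\item If $\gamma\le\gamma_0$, $G$ is the first quadratic. Its roots $\gamma_1,\gamma_2$ satisfy $\gamma_0\le\min\{\gamma_1,\gamma_2\}$ and the leading coefficient is positive, so $G>0$ strictly left of the smaller root. Hence $G(\gamma)>0$ for $\gamma<\gamma_0$. At $\gamma=\gamma_0$ one must check $\gamma_0<\min\{\gamma_1,\gamma_2\}$; this is strict because $\gamma_0<\gamma_1$ (since $\sqrt{|t|^2+1}>-|t|$) and $\gamma_0<\gamma_2$.
\item If $\gamma_0<\gamma<\beta_0$, $G$ is the second quadratic, with roots $\beta_0<\beta_1$, so $G(\gamma)>0$ strictly.
\end{itemize}
There is one degenerate case: when $|s|^2=|t|^2+1$ we have $\beta_0=\gamma_0=0$, and every $\gamma<0$ falls in the first case, where positivity is strict.

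Finally set $\delta=G(\gamma)/\bigl(|s|[(1-2\gamma)^2|s|^2-|t|^2]\bigr)>0$ and $C=C_\gamma\|\varphi\|_{\infty,\gamma}$. Both are independent of $z$, and this gives the stated bound $C\exp(-\delta|z|^2/2)$.
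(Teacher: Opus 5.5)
Your argument is correct and is essentially the paper's proof. You apply Lemma~\ref{lemexpc2} for $\gamma\le\beta_0$, which is admissible since $\beta_0<(|s|-|t|)/(2|s|)$, use Lemma~\ref{lemquadrG} for $G(\beta_0)\ge0$, and get $G(\gamma)>0$ for $\gamma<\beta_0$ from the root locations, with the same split into $|s|^2>|t|^2+1$ versus $|s|^2=|t|^2+1$ and the same $\delta$. One small imprecision: the strict inequality $\gamma_0<\gamma_1$ needs $|s|^2-|t|^2-1>0$ (in the degenerate case $\gamma_0=\gamma_1=0$). That is exactly the case in which $\gamma=\gamma_0<\beta_0$ can occur, so nothing breaks.
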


\begin{proof}
Note that $\beta_0<(|s|-|t|)/(2|s|)$. So for any
$\varphi\in L^\infty(\C, d\lambda_{\gamma})$ with $\gamma\leq\beta_0$,
it follows from  Lemma~\ref{lemexpc2} that
\begin{multline}\label{ineqc2}
\inc\left|K^{(s,t)}(z, w)\right||\varphi(w)|\exp\left(-\frac{|z|^2+|w|^2}{2}
\right)\,dA(w)\\
\leq  C_{\gamma}\, \|\varphi\|_{\infty,\gamma}
\exp\left(-\frac{G(\gamma)}{2|s|\left[(1-2\gamma)^2|s|^2
-|t|^2\right]}|{z}|^2\right),
\end{multline}
where $G(\gamma)$ is the continuous piecewise quadratic function
defined in Lemma~\ref{lemexpc2}. Furthermore, Lemma~\ref{lemquadrG}
implies that $G(\gamma)\geq0$ for $\gamma\leq\beta_0$. So,
for $\varphi\in L^\infty(\C, d\lambda_{\beta_0})$, we deduce
from \eqref{ineqc2} that
$$\sup_{z\in\C}\inc\left|K^{(s,t)}(z, w)\right||\varphi(w)|
\exp\left(-\frac{|z|^2+|w|^2}{2}\right)\,dA(w)\leq
C_{\beta_0}\, \|\varphi\|_{\infty,\beta_0}<\infty.$$

In the case when $\gamma<\beta_0$, let
$\gamma_0, \gamma_{1}, \gamma_{2}$ be the numbers from the
proof of Lemma~\ref{lemquadrG}. Obviously, if $|s|^2>|t|^2+1$,
then $0<\gamma_0<\min\left\{\gamma_{1}, \gamma_{2}\right\}$
and $\gamma_0<\beta_0$. If $|s|^2=|t|^2+1$, then
$\gamma_0=\beta_0=\min\left\{\gamma_{1}, \gamma_{2}\right\}=0$.
It is clear from the proof of Lemma~\ref{lemquadrG} that
$G(\gamma)>0$ for $\gamma<\beta_0$. So for
$\varphi\in L^\infty(\C, d\lambda_\gamma)$ with $\gamma<\beta_0$,
we deduce from \eqref{ineqc2} again that
$$\inc\left|K^{(s,t)}(z, w)\right||\varphi(w)|
\exp\left(-\frac{|z|^2+|w|^2}{2}\right)\,dA(w)\leq
C_{\gamma}\, \|\varphi\|_{\infty,\gamma}
\exp\left(-\frac{\delta}{2}|z|^2\right),$$
where
$$\delta=\frac{G(\gamma)}{|s|\left[(1-2\gamma)^2|s|^2
-|t|^2\right]}>0.$$
This completes the proof of the proposition.
\end{proof}

\begin{proposition}\label{propc1}
Let $(s, t)\in\C^2$ with $|s|^2\geq |t|^2+1$ and let
$\varphi\in L^\infty(\C, d\lambda_{\beta_0})$ with
$$\beta_0=\frac{|s|^2-|t|^2-1}{2|s|(|s|+|t|)}\geq 0.$$
Then
$$\sup_{w\in\C}\inc\left|K^{(s,t)}(z,w)\right|\,|\varphi(w)|
\exp\left(-\frac{|z|^2+|w|^2}{2}\right)\,dA(z)<\infty.$$
In particular, if $\varphi\in L^\infty(\C, d\lambda_\beta)$
with $\beta<\beta_0$, then
$$\inc\left|K^{(s,t)}(z,w)\right|\,|\varphi(w)|\exp
\left(-\frac{|z|^2+|w|^2}{2}\right)\,dA(z)\leq
C\, \exp\left(-\frac{\delta}{2}|z|^2\right),$$
where $C$ and $\delta$ are positive constants independent of $z$.
\end{proposition}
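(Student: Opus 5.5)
The plan is to reduce this to the explicit Gaussian integral of Lemma~\ref{lemiTK}, now integrating in the first variable $z$ with $w$ fixed. The first step is a symmetry of the kernel. A direct comparison of exponents gives
$$\overline{K^{(s,t)}(w,z)}\ \text{ and }\ K^{(\overline s,-t)}(z,w)$$
with the same exponential factor $\exp\bigl((\overline t\,\overline w^2-tz^2+2z\overline w)/(2\overline s)\bigr)$. Since $|\overline s|=|s|$, this yields
$$\left|K^{(s,t)}(z,w)\right|=\left|K^{(\overline s,-t)}(w,z)\right|,\qquad z,w\in\C .$$
Because the pair $(\overline s,-t)$ satisfies the same hypothesis $|s|^2\ge|t|^2+1$, the integral in $z$ becomes an integral of the same type as in Lemma~\ref{lemiTK}, with the roles of the variables exchanged.

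Since $\varphi$ depends only on $w$, it comes out of the integral:
$$\inc\left|K^{(s,t)}(z,w)\right||\varphi(w)|e^{-\frac{|z|^2+|w|^2}{2}}\,dA(z)
\le\|\varphi\|_{\infty,\beta}\,e^{(\beta-\frac12)|w|^2}\inc\left|K^{(\overline s,-t)}(w,z)\right|e^{-\frac12|z|^2}\,dA(z).$$
I would evaluate the last integral with the second identity of Lemma~\ref{lemiTK}, taking $\alpha=1/2$, the point $z=0$ there, and $(s,t)$ replaced by $(\overline s,-t)$. The condition $\alpha>|t|/(2|s|)$ holds since $|s|>|t|$. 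Writing $D=|s|^2-|t|^2\ge1$, the result is a constant times
$$\left|\exp\left(c\,w^2\,\frac{D-1}{D}\right)\right|\exp\left(\frac{|w|^2}{2D}\right),\qquad |c|=\frac{|t|}{2|s|}.$$
Because $D\ge 1$, the modulus of the first factor is at most $\exp\bigl(|t|(D-1)|w|^2/(2|s|D)\bigr)$.

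Collecting exponents, the quantity in the statement is bounded by $C\|\varphi\|_{\infty,\beta}\exp(E(\beta)|w|^2)$, where
$$E(\beta)=\beta-\frac12+\frac{|t|(D-1)+|s|}{2|s|D}.$$
The key algebraic check, which is the main point to verify, is
$$\frac12-\frac{|t|(D-1)+|s|}{2|s|D}=\frac{(|s|-|t|)(D-1)}{2|s|D}=\frac{D-1}{2|s|(|s|+|t|)}=\beta_0 .$$
Hence $E(\beta)=\beta-\beta_0$. For $\beta=\beta_0$ this gives a bound uniform in $w$, which is the first assertion. For $\beta<\beta_0$ it gives the decay $C\exp(-\delta|w|^2/2)$ with $\delta=2(\beta_0-\beta)>0$.

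The decay in the second assertion is in the variable $w$, which is the only free variable after integrating in $z$; the $|z|^2$ in the printed statement should be read as $|w|^2$. I expect the only delicate step to be bounding the oscillatory factor $\bigl|\exp(c\,w^2)\bigr|$ by $\exp(|c||w|^2)$, and confirming that this crude bound is exactly sharp enough to produce $\beta_0$. The computation above indicates that it is, so no case split like the one in Lemma~\ref{lemexpc2} is needed here.
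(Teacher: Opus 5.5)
Your proposal is correct and follows the paper's route: the paper sets $\alpha=1/2$ in \eqref{eqc1}, which is exactly Lemma~\ref{lemiTK} at base point $0$ with the oscillatory factor bounded by its modulus, and this yields the exponent $-\beta_0$. The paper switches from integrating in $w$ to integrating in $z$ without comment, so your explicit symmetry $|K^{(s,t)}(z,w)|=|K^{(\overline s,-t)}(w,z)|$ fills in that step, and you are right that the decay in the second assertion should read $\exp(-\frac{\delta}{2}|w|^2)$.
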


\begin{proof}
Let $\alpha=1/2$ in \eqref{eqc1}. We obtain
$$\inc\left| K^{(s,t)}(z,w)\right|\exp\left(-\frac{|z|^2+|w|^2}2
\right)\,dA(z)\leq\frac{2\pi\sqrt{|s|}}{\sqrt{|s|^2-|t|^2}}
\exp\left[-\frac{|s|^2-|t|^2-1}{2|s|(|s|+|t|)}|w|^2\right].$$
Consequently,
\begin{multline*}
\sup_{w\in\C}\inc\left|K^{(s,t)}(z, w)\right| |\varphi(w)|\exp\left(
-\frac{|z|^2+|w|^2}{2}\right)\,dA(z)\\
\leq\frac{2\pi\sqrt{|s|}}{\sqrt{|s|^2-|t|^2}}\sup_{w\in\C}
\left[|\varphi(w)|\exp\left(-{\beta_0}|w|^2\right)\right].
\end{multline*}
An elementary argument then yields the desired result.
\end{proof}

We now consider the boundedness of our Toeplitz-type
operators. The following point-wise estimate is well known,
\begin{equation}\label{eq:fp-point-evaluation}
e^{-|z|^2/2}|f(z)|\leq\|f\|_{p},\qquad f\in F^p,\quad z\in\C.
\end{equation}
See \cite[Corollary 2.8]{Zhu1}. For $0<p\leq\infty$ we will write
$L^p_g$ for the space of all Lebesgue measurable $f$ on $\C$
such that the function $f(z)e^{-|z|^2/2}$ belongs to $L^p(\C, dA)$.
Clearly, when $T_\varphi^{(s,t)}$ is a bounded operator
from $L^p_g$ to $L^q_g$ for any $0<p,q\leq\infty$, it will also be a
bounded operator from $F^p$ to $F^q$.

\begin{theorem}\label{thmBToep}
Let $(s,t)\in\C^2$ with $|s|^2\geq|t|^2+1$ and let
$$\beta_0=\frac{|s|^2-|t|^2-1}{2|s|(|s|+|t|)}.$$
The operator $T_\varphi^{(s,t)}$ has the following properties.
\begin{enumerate}
\item[(a)] If $\varphi\in L^\infty(\C,d\lambda_{\beta_0})$, then
$T_\varphi^{(s,t)}$ is bounded from $F^p$ to $F^q$ for all
$0< p\leq q\leq\infty$.
\item[(b)] If $\varphi\in L^\infty(\C, d\lambda_\beta)$
with $\beta<\beta_0$, then $T_\varphi^{(s,t)}$ is bounded from
$F^p$ to $F^q$ for all $0< p, q\leq\infty$.
\end{enumerate}
\end{theorem}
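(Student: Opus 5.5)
The plan is to treat $T_\varphi^{(s,t)}$ as an integral operator acting on $L^p_g$. Its kernel with respect to area measure, after the Gaussian weights are absorbed, is
$$H(z,w)=\frac1\pi\left|K^{(s,t)}(z,w)\right||\varphi(w)|\exp\left(-\frac{|z|^2+|w|^2}{2}\right).$$
Then $|T_\varphi^{(s,t)}f(z)|e^{-|z|^2/2}\le\inc H(z,w)\,|f(w)|e^{-|w|^2/2}\,dA(w)$. Propositions~\ref{propc2} and \ref{propc1} give, for $\varphi\in L^\infty(\C,d\lambda_{\beta_0})$, $\sup_z\int H\,dA(w)<\infty$ and $\sup_w\int H\,dA(z)<\infty$ (Schur test). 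By Schur's test this yields boundedness on $L^p_g$, hence on $F^p$, for $1\le p\le\infty$.

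\textbf{Part (a).} Three cases are needed.
\begin{enumerate}
\item[(i)] For $p=q$ with $1\le p\le\infty$, use Schur's test as above.
\item[(ii)] For $p<q$, first use the embedding $F^p\subset F^q$ with $\|f\|_q\le\|f\|_p$. This is the standard inclusion, which follows from \eqref{eq:fp-point-evaluation} by interpolating $|f|^qe^{-q|z|^2/2}\le\|f\|_p^{q-p}|f|^pe^{-p|z|^2/2}$. It reduces matters to boundedness on $F^p$ when $p\ge1$. When $0<p<1$ we would instead aim directly at $F^p\to F^1$ and then embed $F^1$ into $F^q$.
\item[(iii)] So the main obstacle is $0<p<1$, where Schur's test is unavailable. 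The plan is to show $T_\varphi^{(s,t)}:F^p\to F^1$ is bounded. Write $f(w)e^{-|w|^2/2}$ and use a pointwise $p$-trick: $|f(w)|e^{-|w|^2/2}\le\|f\|_p^{1-p}\,|f(w)|^pe^{-p|w|^2/2}$. Then
$$\|T_\varphi^{(s,t)}f\|_1\lesssim\iint H(z,w)|f(w)|e^{-|w|^2/2}\,dA(w)\,dA(z)\le \sup_w\inc H(z,w)\,dA(z)\cdot\|f\|_p^{1-p}\inc|f|^pe^{-p|w|^2/2}dA,$$
by Fubini and Proposition~\ref{propc1}. The right side is $\lesssim\|f\|_p$.
\end{enumerate}
For $q=\infty$, Proposition~\ref{propc2} together with \eqref{eq:fp-point-evaluation} gives $\|T_\varphi^{(s,t)} f\|_\infty\le \sup_z\int H\,dA(w)\,\|f\|_\infty$ directly, and by the embedding also from any $F^p$.

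\textbf{Part (b).} This is the case $q<p$. Here we would use the decay from the second halves of Propositions~\ref{propc2} and \ref{propc1}. Choose $\beta<\gamma<\beta_0$, so $\varphi\in L^\infty(\C,d\lambda_\gamma)$ as well. Since $f\in F^p\subset F^\infty$, we have $|f(w)|e^{-|w|^2/2}\le\|f\|_p$. Then
$$|T_\varphi^{(s,t)}f(z)|e^{-|z|^2/2}\le \|f\|_p\inc H(z,w)\,dA(w)\le C\|f\|_p e^{-\delta|z|^2/2}$$
by the decaying estimate in Proposition~\ref{propc2}. Since $e^{-\delta|z|^2/2}\in L^q(dA)$ for every $q>0$, this gives $\|T_\varphi^{(s,t)}f\|_q\le C'\|f\|_p$ for all $0<p,q\le\infty$. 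In fact this single argument proves (b) without case distinctions, and it combines with (a) only for consistency.

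The one delicate point is ensuring $T_\varphi^{(s,t)}f$ is entire and the integrals converge absolutely. The absolute bounds above give this, and differentiation under the integral sign follows from local uniformity of the same estimates.
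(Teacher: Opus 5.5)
Your Schur-test argument for $1\le p\le\infty$ and your proof of part (b) are what the paper does. The paper proves the $L^1_g$ and $L^\infty_g$ bounds from Propositions~\ref{propc1} and \ref{propc2} and then interpolates, which is the same argument.

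\textbf{The gap: part (a) for $0<p\le q<1$.} For instance $p=q=1/2$ is not covered. Your route for $p<1$ is $F^p\to F^1\hookrightarrow F^q$, but $F^1\subset F^q$ holds only for $q\ge 1$; for $q<1$ the inclusion goes the other way. Your $p$-trick is just the inclusion $F^p\subset F^1$ composed with boundedness on $L^1_g$, so it cannot give an $F^q$ estimate with $q<1$.

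No refinement of the two Schur bounds will fix this. For exponents below $1$, Minkowski's integral inequality fails. A positive kernel satisfying both Schur conditions need not give a bounded operator on $L^q_g$: take the kernel $\chi_D(z)\chi_D(w)$ for a disk $D$ and test it on functions concentrating at a point. So the analyticity of $f$ must enter.

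\textbf{What is missing.} The paper has a separate argument for $0<p<1$.
\begin{enumerate}
\item[1.] Theorem~A gives $\sup_{w\in\C}\|T_\varphi^{(s,t)}k_w\|_p<\infty$ for $\varphi\in L^\infty(\C,d\lambda_{\beta_0})$ and every $p>0$. This rests on the exact computation of $\int[Q^{(s,t)}_\alpha(|K_z|)]^p e^{-p|w|^2/2}\,dA$ in Proposition~\ref{propNTKap}, which is a genuine $p$-th power estimate rather than a Schur-type bound.
\item[2.] As in Theorem~2.4 of \cite{LZZ} or Theorem~23 of \cite{DZ4}, write $f\in F^p$ as $f=\sum_a c_a k_a$ over a fine lattice, with $\sum_a |c_a|^p\le C\|f\|_p^p$ (atomic decomposition).
\item[3.] Justify $T_\varphi^{(s,t)}f=\sum_a c_a T_\varphi^{(s,t)}k_a$. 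This is possible because $\sum_a|c_a|<\infty$, $|k_a(w)|\le e^{|w|^2/2}$, and Proposition~\ref{propc2} holds.
\item[4.] Apply the $p$-triangle inequality $\|g+h\|_p^p\le\|g\|_p^p+\|h\|_p^p$ to get $\|T_\varphi^{(s,t)}f\|_p\le C'\sup_a\|T_\varphi^{(s,t)}k_a\|_p\,\|f\|_p$.
\end{enumerate}
This gives boundedness on $F^p$ for $p<1$. Combined with $F^p\subset F^q$, it closes the remaining cases $0<p\le q<1$.
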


\begin{proof}
To proof part (a), we assume $\varphi\in L^\infty(\C,
d\lambda_{\beta_0})$. Since
\begin{equation}\label{eqsubset}
    F^p\subset F^q,\qquad 0< p\leq q\leq\infty,
\end{equation}
it suffices to prove the boundedness of $T_\varphi^{(s,t)}:
F^p\to F^p$ for any $0<p\leq \infty$. It follows from Theorem A that
$$\left\|T_\varphi^{(s,t)}k_w\right\|_p\leq C,\qquad  w\in\C,$$
for some constant $C$. By a similar argument as Theorem 23 of \cite{DZ4}
(or Theorem 2.4 of \cite{LZZ}), $T_\varphi^{(s,t)}$ is bounded on $F^p$
when $0<p<1$. When $1\leq p\leq\infty$. We will actually prove the
boundedness of $T_\varphi^{(s, t)}$ on $L^p_g$.

By Proposition \ref{propc1}, it is clear that
$$c_1:=\sup_{w\in\C}\inc\left|K^{(s,t)}(z,w)\right| |\varphi(w)|
\exp\left(-\frac{|z|^2+|w|^2}{2}\right)\,dA(z)<\infty.$$
Combining this with Fubini's theorem, we have
\begin{align*}
\|T_\varphi^{(s, t)}f\|_1&=\frac{1}{2\pi}
\inc\left|T^{(s,t)}f(z)\right|e^{-\frac{|z|^2}2}\,dA(z)\\
&\leq \frac{1}{2(\pi)^2}\inc|f(w)|e^{-\frac{|w|^2}{2}}
\,dA(w)\inc\left|K^{(s,t)}(z,w)\right| |\varphi(w)|
e^{-\frac{|z|^2+|w|^2}{2}}dA(z)
\end{align*}
for any $f\in L^1_g$. Consequently,
\begin{align*}
\|T_\varphi^{(s, t)}f\|_1\leq \frac{1}{\pi}\|f\|_{1}\sup_{w\in\C}
\inc\left|K^{(s,t)}(z,w)\right| |\varphi(w)| e^{-\frac{|z|^2+|w|^2}2}
\,dA(z)=\frac{c_1}{\pi}\|f\|_1,
\end{align*}
which shows that $T_\varphi^{(s,t)}$ is bounded on $L^1_g$. On the
other hand, it is clear from Proposition \ref{propc2} that
$$c_2:=\sup_{w\in\C}\inc\left|K^{(s,t)}(z,w)\right| |\varphi(w)|
e^{-\frac{|z|^2+|w|^2}{2}}dA(w)<\infty.$$
So, for any $f\in L^\infty_g$, we have
\begin{align*}
\|T_\varphi^{(s, t)}f\|_{\infty}\leq\frac{1}{\pi}\|f\|_{\infty}
\sup_{z\in\C}\inc\left|K^{({s},t)}(z,w)\right| |\varphi(w)|
e^{-\frac{|z|^2+|w|^2}{2}}\,dA(w)= \frac{c_2}{\pi}\|f\|_{\infty},
\end{align*}
which shows that $T_\varphi^{(s,t)}$ is bounded on $L^\infty_g$.
From complex interpolation we conclude that $T_\varphi^{(s, t)}$ is
bounded on $L^p_g$ for any $1\leq p\leq\infty$.
This completes the proof of part (a).

To prove part (b) we assume that $\varphi\in L^\infty(\C, d\lambda_\beta)$
with $\beta<\beta_0$. For any $z\in\C$ and $f\in F^p$ with $0<p\leq\infty$,
it follows from \eqref{eq:fp-point-evaluation} that
\begin{align*}
e^{-|z|^2/2} \left|T_\varphi^{(s,t)}f(z)\right|
&\leq\frac{1}{\pi}\inc\bigl|K^{(s,t)}(z,w)\bigr|
|\varphi(w)||f(w)|e^{-|w|^2}e^{-|z|^2/2}\,dA(w)\\
&\leq\frac{\|f\|_{p}}{\pi}\inc|K^{(s,t)}(z,w)|\,|\varphi(w)|
e^{-\frac{|z|^2+|w|^2}{2}}\,dA(w).
\end{align*}
By Proposition~\ref{propc2}, there exist constants $C>0$ and
$\delta>0$ such that
\begin{equation}\label{eqpointest}
e^{-|z|^2/2}\left|T_\varphi^{(s,t)}f(z)\right|
\leq C\,\|f\|_{p}e^{-\delta|z|^2/2},\qquad z\in\C.
\end{equation}
So, for $0 < q < \infty$, we have
\begin{align*}
\|T_{\varphi}^{(s,t)}f\|_q^q&=\frac{q}{2\pi}\inc\left(
|T_{\varphi}^{(s,t)}f(z)| e^{-|z|^2/2} \right)^q\,dA(z)\\
&\leq\frac{q\, C^q\,\|f\|_p^q }{2\pi}\inc e^{-q\delta|z|^2/2}\,dA(z)\\
&=C^q \delta^{-1}\|f\|_p^q,
\end{align*}
from which the desired result for any $0 < q < \infty$ follows.
The case $q=\infty$ is clear from \eqref{eqpointest}.
\end{proof}

In the following, we will show all the bounded Toeplitz-type
operators in the part (b) of the above theorem are actually compact.

\begin{theorem}\label{thmCToep}
Let $(s,t)\in\C^2$ with $|s|^2\geq|t|^2+1$ and let
$$\beta_0=\frac{|s|^2-|t|^2-1}{2|s|(|s|+|t|)}.$$
If $\varphi\in L^\infty(\C, d\lambda_\beta)$ with $\beta<\beta_0$,
then $T_\varphi^{(s,t)}$ is compact from $F^p$ to $F^q$ all
$0< p,q\leq\infty$.
\end{theorem}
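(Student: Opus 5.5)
We prove compactness directly from the pointwise decay estimate \eqref{eqpointest} obtained in the proof of Theorem~\ref{thmBToep}(b), combined with a splitting of the integral kernel into a compact-region part and a tail. Fix $\varphi\in L^\infty(\C,d\lambda_\beta)$ with $\beta<\beta_0$, and let $\{f_n\}$ be a bounded sequence in $F^p$, say $\|f_n\|_p\le M$, converging to $0$ uniformly on compact subsets of $\C$. We must show $\|T_\varphi^{(s,t)}f_n\|_q\to0$.

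First choose $\gamma$ with $\beta<\gamma<\beta_0$. Since $|\varphi(w)|\le\|\varphi\|_{\infty,\beta}e^{\beta|w|^2}$, we have $\varphi\in L^\infty(\C,d\lambda_\gamma)$. Moreover $\varphi\chi_{\{|w|>R\}}$ has $\|\cdot\|_{\infty,\gamma}\le\|\varphi\|_{\infty,\beta}e^{-(\gamma-\beta)R^2}$, which tends to $0$ as $R\to\infty$. For $R>0$ we split
\[
T_\varphi^{(s,t)}f_n=T^{(s,t)}_{\varphi\chi_{R}}f_n+T^{(s,t)}_{\varphi(1-\chi_{R})}f_n ,
\]
where $\chi_R$ is the indicator of the disc $\{|w|\le R\}$.

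For the tail term, Proposition~\ref{propc2} applied to the symbol $\varphi(1-\chi_R)$ at level $\gamma$ gives constants $C,\delta>0$, depending only on $\gamma,s,t$, such that
\[
e^{-|z|^2/2}\bigl|T^{(s,t)}_{\varphi(1-\chi_R)}f_n(z)\bigr|\le C\|\varphi\|_{\infty,\beta}e^{-(\gamma-\beta)R^2}M\,e^{-\delta|z|^2/2}.
\]
Here I will check that the constant $C_\gamma$ in Lemma~\ref{lemexpc2} is linear in $\|\varphi\|_{\infty,\gamma}$, which it is. Integrating as in the proof of Theorem~\ref{thmBToep}(b) shows that the $F^q$ norm of the tail is at most $C'Me^{-(\gamma-\beta)R^2}$, uniformly in $n$, for every $0<q\le\infty$.

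For the compactly supported part, with $R$ fixed, we use \eqref{eq:fp-point-evaluation} only on $|w|\le R$. This gives
\[
e^{-|z|^2/2}\bigl|T^{(s,t)}_{\varphi\chi_R}f_n(z)\bigr|\le \frac1\pi\sup_{|w|\le R}\bigl(|f_n(w)|e^{-|w|^2/2}\bigr)\inc|K^{(s,t)}(z,w)||\varphi(w)|\chi_R(w)e^{-\frac{|z|^2+|w|^2}{2}}dA(w).
\]
The integral is bounded by $Ce^{-\delta|z|^2/2}$ by Proposition~\ref{propc2}, since $\varphi\chi_R\in L^\infty(\C,d\lambda_\gamma)$. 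The supremum tends to $0$ as $n\to\infty$ by uniform convergence on the compact disc. Hence $\|T^{(s,t)}_{\varphi\chi_R}f_n\|_q\le C_R\varepsilon_n$ with $\varepsilon_n\to0$, where the $e^{-\delta|z|^2/2}$ factor makes the $L^q$ integral finite for all $q$, including $q<p$.

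Putting the two parts together, for $0<q<\infty$ we use the quasi-triangle inequality $\|a+b\|_q\le 2^{1/q}(\|a\|_q+\|b\|_q)$. Taking $\limsup_n$ and then letting $R\to\infty$ yields $\|T_\varphi^{(s,t)}f_n\|_q\to0$; the case $q=\infty$ is identical. The only point that needs care is that the decay constant $\delta$ and the constant $C$ from Proposition~\ref{propc2} are uniform over the truncated symbols. This holds because they depend only on $\gamma$ and on $\|\cdot\|_{\infty,\gamma}$ through the explicit estimate of Lemma~\ref{lemexpc2}. I expect no serious obstacle beyond this bookkeeping.
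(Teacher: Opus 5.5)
Your argument is correct, and it takes a genuinely different and more elementary route than the paper.

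The paper first uses the inclusions $F^p\subset F^q$ to reduce to $T_\varphi^{(s,t)}:F^\infty\to F^q$ with $0<q<1$. It then expands each $f_n$ by the atomic decomposition $f_n=\sum_{a\in\rho\mathbb{Z}^2}c_a(n)k_a$. The slack $\varepsilon_0=\beta_0-\beta$ is spent by absorbing $e^{\varepsilon_0|w|^2/2}$ into the symbol, so that Proposition~\ref{propc2} still applies. The price is a Gaussian weight $e^{-\varepsilon_0|a|^2/(2(1+\varepsilon_0))}$ in the lattice variable, and the resulting lattice sum is split into $|a|<r$ and $|a|\ge r$.

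You spend the same slack on the symbol side instead. Truncating at $|w|=R$ gives $\|\varphi(1-\chi_R)\|_{\infty,\gamma}\le\|\varphi\|_{\infty,\beta}e^{-(\gamma-\beta)R^2}$. The bound in Lemma~\ref{lemexpc2} is $C_\gamma\|\psi\|_{\infty,\gamma}$ times a Gaussian whose rate depends only on $\gamma,s,t$. Hence the tail is small uniformly in $n$, while the truncated part only sees $f_n$ on a compact disc.

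What your route buys:
\begin{itemize}
\item It avoids the atomic decomposition altogether. In particular, you never need the atomic coefficients to satisfy $c_a(n)\to0$ for each fixed $a$, which the paper uses without comment.
\item It treats all $0<p,q\le\infty$ at once, with no embedding reduction.
\end{itemize}

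What the paper's route buys is an explicit bound on $\|T_\varphi^{(s,t)}f\|_q^q$ by a weighted $\ell^q$ sum of the atomic coefficients of $f$. None of that is needed for compactness. Finally, boundedness is part of the paper's definition of compactness; it is supplied by Theorem~\ref{thmBToep}(b), or equally by your tail estimate with no truncation.
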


\begin{proof}
By \eqref{eqsubset}, it suffices for us to prove the compactness
of $T_\varphi^{(s,t)}:F^\infty\to F^q$ for any $0<q<1$.

Let $\{f_n\}$ be a bounded sequence in $F^\infty$ such that
$f_n\rightarrow 0$ uniformly on the compact sets in $\C$.
We use Theorem 2.34 of \cite{Zhu1} to represent
$f_n\in F^\infty$ as follows:
$$f_n(z)=\sum_{a\in \rho\mathbb{Z}^2} c_a(n) k_a(z),$$
where $\rho\mathbb{Z}^2$ is a square lattice in the complex
plane with sufficiently small $\rho>0$,
$\{c_a(n):a\in\rho\mathbb{Z}^2\}\in l^\infty$, the series
converges in the ``norm topology'' of $F^\infty$, and
\begin{equation}\label{eqatomic}
\sup_{a\in \rho\mathbb{Z}^2} |c_a(n)|\leq M \|f_n\|_\infty
\end{equation}
for some positive constant $M$ that is independent of $f_n$. Note
that $c_a(n)\to 0$ as $n\to \infty$ for any $a\in \rho\mathbb{Z}^2$.

Let $\varepsilon_0=\beta_0-\beta>0$. Then for any
$a\in \rho\mathbb{Z}^2$ and $w\in\C$, we have
$$|K_a(w)|\leq \exp\left(\frac{|a|^2}{2(1+\varepsilon_0)}
+\frac{(1+\varepsilon_0)|w|^2}{2}\right).$$
Consequently,
\begin{multline}\label{eqcestcom}
 \inc \left|K^{(s,t)}(z,w)\right| |\varphi(w)| |k_a(w)|
 e^{-|w|^2}\,dA(w)\\
\leq\exp\left(-\frac{\varepsilon_0|a|^2}{2(1+\varepsilon_0)}
\right)\inc\left|K^{(s,t)}(z,w)\right||\varphi(w)|
e^{\frac{(\varepsilon_0-1)|w|^2}{2}}\,dA(w).
\end{multline}
Since $\varphi\in L^\infty(\C, d\lambda_\beta)$ with
$\beta<\beta_0$, it is clear that $\varphi(w)
e^{\frac{\varepsilon_0|w|^2}{2}}\in
L^\infty(\C, d\lambda_\gamma)$ with $\gamma=
(\beta+\beta_0)/2<\beta_0$. By Proposition~\ref{propc2},
we have
$$\inc\left|K^{(s,t)}(z,w)\right| |\varphi(w)|
e^{\frac{\varepsilon_0|w|^2}{2}}e^{-\frac{|z|^2+|w|^2}{2}}
\,dA(w)\leq C\,\exp\left(-\frac{\delta}{2}|z|^2\right),$$
where $C$ and $\delta$ are positive constants independent of $z$.
Combining this with Lemma 1.12 of \cite{Zhu1}, \eqref{eqatomic},
and \eqref{eqcestcom}, we obtain
\begin{multline*}
\sum_{a\in\rho\mathbb{Z}^2}\left|c_a(n)\right|\inc\left|K^{(s,t)}(z,w)
\right||\varphi(w)| |k_a(w)|e^{-|w|^2}\,dA(w)\\
\leq C \exp\left(\frac{1-\delta}{2}|z|^2\right)
\sum_{a\in \rho\mathbb{Z}^2}\left|c_a(n)\right|\exp
\left(-\frac{\varepsilon_0}{2(1+\varepsilon_0)}|a|^2\right)<\infty.
\end{multline*}
By Fubini's theorem,
\begin{align*}
e^{-\frac{|z|^2}2}\left|T_\varphi^{(s,t)}f_n(z)\right|
&\leq\frac{1}{\pi}e^{-\frac{|z|^2}2}\inc\left|K^{(s,t)}(z,w)
\right| |\varphi(w)| |f_n(w)|e^{-|w|^2}\,dA(w)\\
&\leq\frac{1}{\pi}e^{-\frac{|z|^2}2}\sum_{a\in
\rho\mathbb{Z}^2}\left|c_a(n)\right|\inc \left|K^{(s,t)}(z,w)
\right||\varphi(w)| |k_a(w)|e^{-|w|^2}\,dA(w)\\
&\leq\frac{C}{\pi}\exp\left(\frac{-\delta}{2}|z|^2\right)
\sum_{a\in \rho\mathbb{Z}^2}\left|c_a(n)\right|
\exp\left(-\frac{\varepsilon_0}{2(1+\varepsilon_0)}|a|^2\right).
\end{align*}
So, by H\"{o}lder's inequality, we have
\begin{align*}
\|T_\varphi^{(s,t)}f_n\|_q^q &=\frac{q}{2\pi}\inc\left|
T_\varphi^{(s,t)}f_n(z)\right|^qe^{-\frac{q|z|^2}2}\,dA(z)\\
&\leq\frac{C^q}{\delta\pi^{q}}\,\sum_{a\in \rho\mathbb{Z}^2}
\left|c_a(n)\right|^q\exp\left(-\frac{q\varepsilon_0}
{2(1+\varepsilon_0)}|a|^2\right)
\end{align*}
for any $0<q<1$.

Fix any $r>0$ and split $\rho\mathbb{Z}^2$ into two parts:
$$\Gamma_r=\{a\in \rho\mathbb{Z}^2: |a|<r\},\qquad
\Gamma^\prime_r=\{a\in \rho\mathbb{Z}^2: |a|\geq r\}.$$
Since $c_a(n)\to 0$ as $n\to \infty$ for any $a\in\rho\mathbb{Z}^2$,
it follows from Lemma 1.12 of \cite{Zhu1} that
\begin{align*}
\sum_{a\in \Gamma_r} \left|c_a(n)\right|^q\exp
\left(-\frac{q\varepsilon_0}{2(1+\varepsilon_0)}|a|^2\right)
\leq\max_{a\in\Gamma_r}|c_a(n)|^q\,\sum_{a\in
\rho\mathbb{Z}^2} \exp\left(-\frac{q\varepsilon_0}
{2(1+\varepsilon_0)}|a|^2\right)\rightarrow 0
\end{align*}
as $n\rightarrow\infty$. Also, since $\{\|f_n\|_{\infty}\}$ is
bounded, it follows from \eqref{eqatomic} that
$$\sum_{a\in \Gamma^\prime_r}\left|c_a(n)\right|^q
\exp\left(-\frac{q\varepsilon_0}{2(1+\varepsilon_0)}|a|^2\right)
\leq \|f_n\|_{\infty}^q \sum_{a\in \Gamma^\prime_r}
\exp\left(-\frac{q\varepsilon_0}{2(1+\varepsilon_0)}|a|^2
\right)\rightarrow 0$$
uniformly in $n$ as $r\rightarrow\infty$. So
$\|T^{(s, t)}f_n\|_q\rightarrow 0$ as $n\rightarrow\infty$ for
any $0<q<1$. This completes the proof of the theorem.
\end{proof}

\section{Sharpness of the constant $\beta_0$}

In this section we investigate the sharpness of the constant
$\beta_0$ in Theorem B. We first need the following critical lemma.

\begin{lemma}\label{lem:sharpnesssymbol}
Let $s=|s|e^{i\theta_s}$ and $t=|t|e^{i\theta_t}$ with
$|s|^2\ge|t|^2+1$, where $\theta_s,\theta_t\in[-\pi,\pi)$. Define
$$\theta=\frac{\theta_s+\theta_t}{2}, \qquad
\eta=\frac{\theta_t-\theta_s}{2},$$
and let
\begin{equation}\label{eqxirzr}
\xi_r=e^{-i\theta}ir,\qquad z_r=e^{-i\eta}\frac{ir}{|s|+|t|},
\end{equation}
where $r>0$. For any fixed $\gamma$ satisfying
$$\beta_0=\frac{|s|^2-|t|^2-1}{2|s|(|s|+|t|)}
\leq\gamma< \frac{1}{2},$$
consider the symbol function
\begin{equation}\label{eq:counterexample-symbol-fp}
\varphi_\gamma(u)=\exp\left[\gamma \left(\im (e^{i\theta}u)
\right)^2+2i\beta_0\re (e^{i\theta}u)\im (e^{i\theta}u)
\right],\qquad u\in\C.
\end{equation}
Then we have
\begin{equation*}\label{eq:exact-growth-fp}
e^{-|z_r|^2/2}T_{\varphi_\gamma}^{(s,t)}k_{\xi_r}(z_r)
=\frac{\sqrt{2|s|}}{\sqrt{s}\sqrt{(2|s|+|t|)(h-\delta)}}
\exp\left(\frac{h\delta}{h-\delta}r^2\right),
\end{equation*}
where $\delta=\gamma-\beta_0\in[0,1/2)$ and $h$ is a positive
constant (independent of $\gamma$ and $r$) such that $h>1/2>\delta$.
\end{lemma}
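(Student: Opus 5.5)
The plan is to compute $T^{(s,t)}_{\varphi_\gamma}k_{\xi_r}(z_r)$ exactly as a Gaussian integral, by first rotating the variable of integration so that every quadratic form appearing becomes real-diagonal or simple. Write
$$T^{(s,t)}_{\varphi_\gamma}k_{\xi_r}(z)=\frac{e^{-|\xi_r|^2/2}}{\pi\sqrt s}\,e^{\frac{tz^2}{2s}}\inc \exp\Bigl(-\frac{\overline{t}\,\overline{w}^2}{2\overline{s}}+\frac{z\overline w}{s}+w\overline{\xi_r}\Bigr)\varphi_\gamma(w)e^{-|w|^2}\,dA(w).$$
(The conjugate in the kernel exponent should be matched exactly to the definition of $K^{(s,t)}$, which I will recheck in the computation.) Then substitute $w=e^{-i\theta}v$ with $v=x+iy$. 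The point of the choice $\theta=(\theta_s+\theta_t)/2$ is that
$$\frac{\overline t\,\overline w^2}{\overline s}=\frac{|t|}{|s|}\,e^{i(\theta_s-\theta_t)}e^{2i\theta}\,\overline v^2$$
(or its analogue) has a real coefficient times $\overline v^2$. Likewise, $\varphi_\gamma(e^{-i\theta}v)=\exp(\gamma y^2+2i\beta_0xy)$.

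The whole exponent then becomes $-X^{T}AX+b^{T}X$ with $X=(x,y)^T$ and a complex symmetric matrix $A$. Its real part is
$$\operatorname{diag}\Bigl(1+\tfrac{|t|}{2|s|},\;1-\gamma-\tfrac{|t|}{2|s|}\Bigr)$$
(up to the sign convention), and its off-diagonal imaginary entries collect $-2\beta_0xy$ and the $xy$-part of $\overline v^2$.

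Next, the choice $\xi_r=e^{-i\theta}ir$ and $z_r=e^{-i\eta}ir/(|s|+|t|)$ is designed so that, after the rotation, the linear terms $z\overline w/s+w\overline{\xi_r}$ produce a vector $b$ that is purely in one real direction (essentially multiples of $r$ in the $y$ and $x$ slots with controlled phases). Positivity of $\re A$ holds because $1-\gamma-|t|/(2|s|)>0$, which follows from $\gamma<1/2$ and $|t|<|s|$. Given this, I apply the standard formula
$$\int_{\R^2}e^{-X^TAX+b^TX}\,dx\,dy=\frac{\pi}{\sqrt{\det A}}\exp\Bigl(\tfrac14 b^TA^{-1}b\Bigr),$$
justified by analytic continuation from the real case, or equivalently by \eqref{eqIC} after rewriting in $v,\overline v$. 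Multiplying by the prefactors $e^{-|\xi_r|^2/2}$, $e^{tz_r^2/(2s)}$ and $e^{-|z_r|^2/2}$ gives $e^{-|z_r|^2/2}T^{(s,t)}_{\varphi_\gamma}k_{\xi_r}(z_r)=c(\gamma)\exp(E(\gamma)r^2)$ with explicit $c(\gamma)$ and $E(\gamma)$.

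The main obstacle is the algebra showing that $E(\gamma)$ is real and has exactly the form $h\delta/(h-\delta)$ with $\delta=\gamma-\beta_0$. I will organize this as follows.
\begin{enumerate}
\item Verify that the imaginary parts of $b^TA^{-1}b$ and of the prefactor phases cancel. This is what the term $2i\beta_0xy$ in $\varphi_\gamma$ and the angle $\eta$ are for.
\item Check $E(\beta_0)=0$. This reflects the identity $H(\beta_0)=0$ from Lemma~\ref{lemquadr}, or equivalently the vanishing of $G$ at $\beta_0$ in Lemma~\ref{lemquadrG}. It gives a useful consistency test.
\item Observe that $\gamma$ enters $A$ only through one diagonal entry. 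Hence $A^{-1}$, and therefore $E$, is a Möbius function of $\gamma$. A Möbius function vanishing at $\delta=0$ can be written $\kappa\delta/(h-\delta)$; matching the normalization will then give $\kappa=h$, with $h$ read off as the value making the diagonal entry of $\det A$ vanish.
\end{enumerate}
Showing $h>1/2$ then amounts to checking that this pole lies beyond $\gamma=\beta_0+1/2$, which comes from the same positivity used for $\re A$. Finally, $\det A$ is linear in $\delta$ and is proportional to $(2|s|+|t|)(h-\delta)/(2|s|)$. Combined with $1/\sqrt s$, this yields the stated prefactor. Throughout, the branch of $\sqrt{\det A}$ has to be tracked consistently with the paper's convention for the complex square root.
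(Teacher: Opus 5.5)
Your plan is correct and is essentially the paper's proof. After rotating the integration variable by $e^{i\theta}$, the paper arrives at exactly your Gaussian, $\frac1\pi\int_{\R}\int_{\R}\exp\bigl[-dx^2-(\frac{1+a}{2}-\delta)y^2+(1+a)ry+i(1-a)x(y-r)\bigr]\,dx\,dy$, where $a=\frac{1}{|s|(|s|+|t|)}$ and $d=1+\frac{|t|}{2|s|}$. It then integrates in $x$ first and completes the square in $y$, which produces $h=\frac{1+a}{2}+\frac{(1-a)^2}{4d}$; it does not use the $2\times 2$ complex Gaussian formula or your M\"obius-type argument.

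Do fix the slip you flagged: the kernel contributes $-\overline{t}\,\overline{w}^2/(2s)$, not $/(2\overline{s})$. Only with $s$ does the rotation give the real term $-\frac{|t|}{2|s|}\overline{v}^2$; your version leaves a factor $e^{2i\theta_s}$. Once this is corrected, the prefactors $e^{-|\xi_r|^2/2}e^{tz_r^2/(2s)}e^{-|z_r|^2/2}$ are real and combine to $e^{-(1+a)r^2/2}$, $b^TA^{-1}b$ is automatically real, and $\det A=d(h-\delta)>0$. So no phase cancellation or branch tracking is actually needed.
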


\begin{proof}
With the change of variables $w=e^{i\theta}u$, we obtain
\begin{align*}
T_{\varphi_\gamma}^{(s,t)}k_{\xi_r}(z_r)&=\inc
K^{(s,t)}(z_r, u)\varphi_\gamma(u)k_{\xi_r}(u)\,d\lambda(u)\\
&=\frac{e^{-|\xi_r|^2/2}}{\sqrt{s}}\inc\exp\left(
\frac{t z_r^2}{2 {s}}-\frac{|t| \overline{w}^2}{2|s|}+
\frac{e^{i\theta}z_r\overline{w}}{{s}}
+{w\overline{e^{i\theta}\xi_r}}\right)\varphi_\gamma
(e^{-i\theta}w) \,d\lambda(w).
\end{align*}
From the definitions of
$z_r$ and $\xi_r$, we have
$$\frac{t z_r^2}{2s}=-\frac{|t|r^2}{2|s|(|s|+|t|)^2},\qquad
\frac{e^{i\theta}z_r}{s}=\frac{ir}{|s|(|s|+|t|)},\qquad
\overline{e^{i\theta}\xi_r}=-ir.$$
If we write
$$a=\frac1{|s|(|s|+|t|)},\qquad d=1+\frac{|t|}{2|s|},$$
then
\begin{multline}\label{eq:two-dimensional-integral-fp}
e^{-|z_r|^2/2} T_{\varphi_\gamma}^{(s,t)}k_{\xi_r}(z_r)\\
=\frac{e^{-(1+a)r^2/2}}{\sqrt{s}}\inc\exp
\left(-\frac{|t| \overline{w}^2}{2|s|}+
\frac{ir\overline{w}}{|s|(|s|+|t|)}-{ir w}\right)
\varphi_\gamma(e^{-i\theta}w) \,d\lambda(w).
\end{multline}
By \eqref{eq:counterexample-symbol-fp}, we have
$$\varphi_\gamma(e^{-i\theta}w)=\exp\left(\gamma y^2
+2i\beta_0xy\right),\qquad w=x+iy.$$
Since
$$2\beta_0+\frac{|t|}{|s|}=1-a,\qquad
1-\frac{|t|}{2|s|}-\gamma=\frac{1+a}{2}-\delta,$$
we obtain
\begin{multline*}
\inc\exp\left(-\frac{|t| \overline{w}^2}{2|s|}+
\frac{ir\overline{w}}{|s|(|s|+|t|)}-{ir w}\right)
\varphi_\gamma(e^{-i\theta}w) \,d\lambda(w)=\\
\frac{1}{\pi}\inr\inr\exp\left[-dx^2-\left(\frac{1+a}{2}
-\delta\right)y^2+(1+a)ry+i(1-a)x(y-r)\right]\,dx\,dy.
\end{multline*}
The integral above is absolutely convergent because
$d\geq1$ and
$$0\leq\delta<\frac12<\frac{1+a}{2}.$$

For any fixed $y\in\mathbb R$,
$$\int_{\mathbb R}\exp\left(-dx^2+i(1-a)(y-r)x
\right)\,dx=\sqrt{\frac{\pi}{d}}\,\exp
\left(-\frac{(1-a)^2(y-r)^2}{4d}\right).$$
So \eqref{eq:two-dimensional-integral-fp} becomes
\begin{multline*}
e^{-|z_r|^2/2}T_{\varphi_\gamma}^{(s,t)}
k_{\xi_r}(z_r)\\[8pt]
=\frac{e^{-(1+a)r^2/2}}{\sqrt{s}\sqrt{\pi d}}
\int_{\mathbb R}\exp\left[-\left(\frac{1+a}{2}
-\delta\right)y^2+(1+a)ry-\frac{(1-a)^2(y-r)^2}
{4d}\right]\,dy.
\end{multline*}
Write
$$h=\frac{1+a}{2}+\frac{(1-a)^2}{4d}.$$
Then $h>1/2>\delta$, $h-\delta>0$, and
\begin{align*}
-\left(\frac{1+a}{2}-\delta\right)y^2+(1+a)ry
&-\frac{(1-a)^2(y-r)^2}{4d}-\frac{1+a}{2}r^2\\
&=-(h-\delta)\left(y-\frac{h}{h-\delta}r\right)^2
+\frac{h\delta}{h-\delta}r^2.
\end{align*}
Therefore,
\begin{equation*}\label{eq:exact-growth-fp}
e^{-|z_r|^2/2}T_{\varphi_\gamma}^{(s,t)}
k_{\xi_r}(z_r)=\frac{1}{\sqrt{s}\sqrt{d(h-\delta)}}
\exp\left(\frac{h\delta}{h-\delta}r^2\right).
\end{equation*}
This completes the proof of the lemma.
\end{proof}

To prove the main result of this section, we will use
Khintchine's inequality; see \cite{Pi}. Let $\varepsilon_1,\ldots,\varepsilon_N$ be
independent Rademacher random variables on a probability
space $(\Omega, \mathcal{F}, \mathbb P)$, so
$$\mathbb P(\varepsilon_j=1)=\mathbb P(\varepsilon_j=-1)=1/2.$$
Khintchine's inequality states that, for any $0<q<\infty$, there exist
positive constants $A_q$ and $B_q$, depending only on $q$, such that
$$ A_q\left(\sum_{j=1}^{N}|a_j|^2\right)^{1/2}\leq\left( \mathbb E
\left|\sum_{j=1}^{N}\varepsilon_j a_j\right|^q \right)^{1/q}
\leq B_q \left(\sum_{j=1}^{N}|a_j|^2\right)^{1/2},$$
for every finite sequence of complex numbers $a_1,\ldots,a_N$,
where $\mathbb E$ denotes mathematical expectation. Equivalently,
the lower estimate in the above inequality can be written as
\begin{equation}\label{eq:Khintchine}
\mathbb E \left| \sum_{j=1}^{N}\varepsilon_j a_j \right|^q
\geq A_q^q \left(\sum_{j=1}^{N}|a_j|^2\right)^{q/2}.
\end{equation}
Again, the constant $A_q$ is independent of $N$
and the coefficients $a_1,\ldots,a_N$.

We are now ready to prove the main result of this section, which
shows the constant $\beta_0$ in Theorem B is sharp.

\begin{theorem}\label{thm:sharpness-beta-fp}
Let $(s, t)\in\C^2$ with $|s|^2\ge|t|^2+1$ and let
$$\beta_0=\frac{|s|^2-|t|^2-1}{2|s|(|s|+|t|)}.$$
Then
\begin{enumerate}
\item[(a)] For every $\beta>\beta_0$, there exists a symbol function
$\varphi\in L^\infty(\mathbb C,d\lambda_\beta)$
such that $T_\varphi^{(s,t)}$ is not bounded from $F^p$ to $F^q$
for any $0<p,q\leq\infty$.
\item[(b)] There exists a symbol function
$\varphi_{\beta_0}\in L^\infty(\mathbb C,d\lambda_{\beta_0})$
such that $T_{\varphi_{\beta_0}}^{(s,t)}$ is not compact from
$F^p$ to $F^q$ for any $0<p,q\leq\infty$. Furthermore, if $p>q$,
$T_{\varphi_{\beta_0}}^{(s,t)}$ is actually not bounded
from $F^p$ to $F^q$.
\end{enumerate}
\end{theorem}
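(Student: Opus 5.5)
Everything rests on the explicit pointwise formula of Lemma~\ref{lem:sharpnesssymbol}, combined with the estimate $e^{-|z|^2/2}|g(z)|\le\|g\|_q$ from \eqref{eq:fp-point-evaluation} and the fact that $\|k_\xi\|_p=1$ for every $0<p\le\infty$.

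\emph{Part (a).} Given $\beta>\beta_0$, we may shrink $\beta$ (but keep $\beta>\beta_0$, and $\beta<1/2$ if needed), since $L^\infty(\C,d\lambda_{\beta'})\subset L^\infty(\C,d\lambda_\beta)$ for $\beta'\le\beta$. Set $\gamma=\beta$ and $\varphi=\varphi_\gamma$. Since $|\varphi_\gamma(u)|=\exp(\gamma(\im e^{i\theta}u)^2)\le e^{\gamma|u|^2}$, we have $\varphi\in L^\infty(\C,d\lambda_\beta)$. With $\delta=\gamma-\beta_0>0$, the lemma gives
$$\|T_\varphi^{(s,t)}k_{\xi_r}\|_q\ge e^{-|z_r|^2/2}\bigl|T_\varphi^{(s,t)}k_{\xi_r}(z_r)\bigr|=c\,e^{h\delta r^2/(h-\delta)}\to\infty$$
as $r\to\infty$, while $\|k_{\xi_r}\|_p=1$. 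Hence $T_\varphi^{(s,t)}$ is unbounded $F^p\to F^q$ for all $p,q$. The same computation also gives the first sharpness claim of Theorem~A.

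\emph{Part (b), non-compactness.} Take $\varphi_{\beta_0}=\varphi_\gamma$ with $\gamma=\beta_0$, so $\delta=0$ and
$$e^{-|z_r|^2/2}\bigl|T^{(s,t)}_{\varphi_{\beta_0}}k_{\xi_r}(z_r)\bigr|=c_0>0$$
for every $r$. The family $k_{\xi_r}$ is bounded in $F^p$ and tends to $0$ uniformly on compact sets as $r\to\infty$. Nevertheless $\|T k_{\xi_r}\|_q\ge c_0$, so $T$ is not compact whenever it is bounded. When it is unbounded, it is in particular not compact under the paper's convention, where compactness presupposes boundedness; alternatively, $\|Tf_n\|_q\not\to0$ for these $f_n$ in any case. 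This also gives the last claim of Theorem~A.

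\emph{Part (b), unboundedness for $p>q$.} This is the main step. I would use Khintchine randomization. Choose radii $r_1<\dots<r_N$ so sparse that the points $\xi_j=\xi_{r_j}$ are separated. Then
$$\Bigl\|\sum_j \varepsilon_j k_{\xi_j}\Bigr\|_p\le C N^{1/p}$$
for every choice of signs, with $N^0$ interpreted for $p=\infty$, by the standard estimate for separated sequences (atomic decomposition, \cite{Zhu1}). If $T$ were bounded $F^p\to F^q$ with $q<\infty$, averaging and applying \eqref{eq:Khintchine} pointwise would give
$$C^qN^{q/p}\ \ge\ \mathbb E\Bigl\|T\sum_j\varepsilon_jk_{\xi_j}\Bigr\|_q^q\ \ge\ A_q^q\,\frac{q}{2\pi}\int_\C\Bigl(\sum_j|Tk_{\xi_j}(z)|^2e^{-|z|^2}\Bigr)^{q/2}dA(z).$$
Next I need a lower bound $|Tk_{\xi_j}(z)|e^{-|z|^2/2}\ge c_0/2$ on a disc $D(z_{r_j},\rho)$ of fixed radius around $z_{r_j}$. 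I would get this by noting that $Tk_{\xi_j}$ is bounded in $F^\infty$ (by Theorem~\ref{thmBToep}(a) with $p=q=\infty$), so the function $|g(z)|e^{-|z|^2/2}$ is uniformly Lipschitz; this follows from a Cauchy estimate on the translate $g(z+a)e^{-\bar a z-|a|^2/2}$. Taking the discs disjoint and keeping only the $j$-th term on the $j$-th disc, the right side is at least $cN$. So $N^{q/p}\gtrsim N$, which fails as $N\to\infty$ because $q<p$. The case $q=\infty<p$ cannot occur since $q<p$, and if $p=\infty$ we simply have $N^{q/p}=1$.

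The expected obstacle is exactly this local lower bound together with the uniform $F^p$ estimate for the randomized sum. The remaining cases are direct consequences of Lemma~\ref{lem:sharpnesssymbol}.
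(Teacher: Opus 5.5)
Your proposal is correct and follows the paper's proof essentially step for step: the explicit formula of Lemma~\ref{lem:sharpnesssymbol} with $\gamma\in(\beta_0,\min\{\beta,1/2\})$ for (a), the $r$-independent lower bound at $\gamma=\beta_0$ tested on the unit vectors $k_{\xi_r}$ for non-compactness, and Khintchine randomization of $\sum_j\varepsilon_j k_{\xi_{r_j}}$, combined with a Cauchy-estimate local lower bound on disjoint discs $B(z_{r_j},\rho)$, for the case $q<p$. The only difference is cosmetic: you cite the standard $\ell^p$ synthesis bound for separated sequences, whereas the paper verifies $\|f_\varepsilon\|_p\le C_{p,R}N^{1/p}$ by hand; take the radii as the first $N$ terms of one fixed sequence (e.g.\ $r_j=jR$ with $R$ large) so that this constant is independent of $N$.
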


\begin{proof}
To prove part (a) we fix any $\beta>\beta_0$. Since $0\leq\beta_0<1/2$,
we can choose $\gamma$ such that
$$\beta_0<\gamma<\min\left\{\beta,\frac12\right\}.$$
Let $\delta=\gamma-\beta_0>0$ and consider the function
$\varphi_\gamma$ defined in \eqref{eq:counterexample-symbol-fp}.
We have
\begin{align*}
|\varphi_\gamma(w)|e^{-\beta|w|^2}&=\exp\left[\gamma
\left(\im (e^{i\theta}w)\right)^2-\beta\left(\left(\re (e^{i\theta}w)
\right)^2+\left(\im (e^{i\theta}w)\right)^2\right)\right]\\
&=\exp\left[-\beta \left(\re (e^{i\theta}w)\right)^2-(\beta-\gamma)
\left(\im (e^{i\theta}w)\right)^2\right]\leq1,
\end{align*}
which shows $\varphi_\gamma\in L^\infty(\mathbb C,d\lambda_\beta)$.

Suppose that $T_{\varphi_\gamma}^{(s,t)}$ is bounded from $F^p$
to $F^q$ for some $0<p,q\leq\infty$. We will obtain a contradiction.
To this end, we use \eqref{eq:fp-point-evaluation} to obtain
\begin{align}\label{eq:bounded-test-fp}
e^{-|z_r|^2/2}\left|T_{\varphi_\gamma}^{(s,t)}k_{\xi_r}(z_r)
\right|\leq\left\|T_{\varphi_\gamma}^{(s,t)} k_{\xi_r}\right\|_{q}
=\|T_{\varphi_\gamma}^{(s,t)}\|_{F^p\to F^q},
\end{align}
where $\xi_r$ and $z_r$ are defined in \eqref{eqxirzr} for $r>0$.
It follows from Lemma~\ref{lem:sharpnesssymbol} that
$$e^{-|z_r|^2/2}\left|T_{\varphi_\gamma}^{(s,t)}
k_{\xi_r}(z_r)\right|=\frac{\sqrt{2}}{\sqrt{(2|s|+|t|)(h-\delta)}}
\exp\left(\frac{h\delta}{h-\delta}r^2\right)$$
for a constant $h$, independent of $\gamma$, with $h>\delta>0$.
So the right-hand side of \eqref{eq:bounded-test-fp} tends to infinity
as $r\to\infty$. This contradicts with \eqref{eq:bounded-test-fp}.
Therefore, $T_{\varphi_\gamma}^{(s,t)}$ is not bounded from
$F^p$ to $F^q$ for any $0<p,q\leq\infty$.

To prove part (b), we also use $\varphi_{\beta_0}$ from
\eqref{eq:counterexample-symbol-fp} with $\gamma=\beta_0$
and take $\xi_r$ and $z_r$ from \eqref{eqxirzr} for $r>0$.
Similarly, it follows that $\varphi_{\beta_0}\in
L^\infty(\C,d\lambda_{\beta_0})$ and
\begin{equation}\label{eq:pointwise-c}
e^{-|z_r|^2/2}\left|T_{\varphi_{\beta_0}}^{(s,t)}k_{\xi_r}(z_r)
\right|=\frac{\sqrt{2}}{\sqrt{h(2|s|+|t|)}}=:c_{s,t},\qquad r>0.
\end{equation}
In particular, the positive constant $c_{s,t}$ is independent of $r$.
This implies that
$$\left\|T_{\varphi_{\beta_0}}^{(s,t)}k_{\xi_r}\right\|_{q}\geq
c_{s,t}\nrightarrow 0,\qquad r\rightarrow\infty.$$
Note that each $k_{\xi_r}$ is a unit vector in $F^p$ and
$k_{\xi_r}(w)\to0$ (as $r\rightarrow\infty$) uniformly for $w$ in
any compact subset of $\C$ . Thus $T_{\varphi_{\beta_0}}^{(s,t)}$
is not compact from $F^p$ to $F^q$ for any $0<p,q\le\infty$.

Next we assume $0<p<q\leq\infty$ and consider the boundedness of
$T_{\varphi_{\beta_0}}^{(s,t)}$ from $F^p$ to $F^q$.
Since $\varphi_{\beta_0}\in L^\infty(\C,d\lambda_{\beta_0})$,
Theorem~\ref{thmBToep} shows that $T_{\varphi_{\beta_0}}^{(s,t)}$
is bounded on $F^\infty$. Thus the functions
$g_r=T_{\varphi_{\beta_0}}^{(s,t)}k_{\xi_r}$ belong to $F^\infty$ and
$$\left\|g_r\right\|_{\infty} \leq
 \left\|T_{\varphi_{\beta_0}}^{(s,t)}\right\|_{F^\infty\to F^\infty}
 \left\|k_{\xi_r}\right\|_{\infty}
 =\left\|T_{\varphi_{\beta_0}}^{(s,t)}\right\|_{F^\infty\to F^\infty}.$$
Therefore,
$$M:=\sup_{r>0}\left\|g_r\right\|_{\infty}<\infty.$$

\textbf{Claim.} There is a constant $\rho>0$ such that for all
$z\in\mathbb C$ and all $r>0$,
\begin{equation}\label{eq:local-lower}
 |g_r(z)|e^{-|z|^2/2}\geq \frac{c_{s,t}}2e^{-\rho^2/2},
 \qquad |z-z_r|<\rho.
\end{equation}
First of all, for any $r>0$, we define entire functions
$$G_r(v)= e^{-\overline{z_r}v-|z_r|^2/2}g_r(z_r+v), \qquad v\in\C.$$
For $|v|\leq1$ we have
$$\left| G_r(v) \right|\leq M\exp\left( -\operatorname{Re}
(\overline{z_r}v)-\frac{|z_r|^2}{2}+\frac{|z_r+v|^2}{2}\right)
= M e^{|v|^2/2}\leq M e^{1/2}.$$
Combining this with Cauchy inequalities, we obtain
\begin{align*}
 \left|G_r^\prime(v)
 \right|\leq 2\sup_{|\zeta-v|=1/2}\left| G_r(\zeta) \right| \leq2Me^{1/2}, \qquad |v|<1/2.
\end{align*}
Fix $\rho_1\in(0,1/2)$. For $|u|<\rho_1$ it follows from
$$G_r(u)- G_r(0)=\int_0^1 u\, G_r^\prime(tu)\,dt$$
that $\left|G_r(u)- G_r(0) \right|\leq 2Me^{1/2}|u|$. Let
$$\rho=\min\left\{\rho_1,\frac{c_{s,t}}{4Me^{1/2}}\right\}>0.$$
Then by the triangle inequality and \eqref{eq:pointwise-c}, we have
\begin{align*}
\left|G_r(u)\right|\geq\left|G_r(0)\right|-\left|G_r(u)- G_r(0) \right|
\geq\frac{c_{s,t}}2, \qquad |u|\leq \rho.
\end{align*}
If $|z-z_r|<\rho$, we write $z=z_r+u$ with $|u|<\rho$. Since
$$|z_r+u|^2=|z_r|^2+2\operatorname{Re}(\overline{z_r}u)+|u|^2,$$
we have
$$|g_r(z)|e^{-|z|^2/2}=\left|e^{-\overline{z_r}u-|z_r|^2/2}
g_r(z_r+u)\right|e^{-|u|^2/2}.$$
Therefore,
$$|g_r(z)|e^{-|z|^2/2}=\left|G_r(u)\right|e^{-|u|^2/2}
\geq\frac{c_{s,t}}2e^{-\rho^2/2}.$$
This proves the claim.

Now fix some $\rho$ satisfying \eqref{eq:local-lower}, pick a
sufficiently large $R>2\rho(|s|+|t|)>0$, and let $r_j=jR$ for $j\geq1$.
It follows from \eqref{eqxirzr} that
$$|\xi_{r_j}-\xi_{r_k}|=R|j-k|, \qquad |z_{r_j}-z_{r_k}|=
\frac{R}{|s|+|t|}|j-k|.$$
Thus the sequence $\{\xi_{r_j}\}_{j\geq1}$ is uniformly separated
and the disks
$$B(z_{r_j},\rho) =  \left\{z\in\mathbb C:|z-z_{r_j}|<\rho\right\},
\qquad j\geq1,$$
are pairwise disjoint. For $N\geq1$ and $\varepsilon_j\in\{-1,1\}$
consider the functions
$$f_{\varepsilon} =\sum_{j=1}^{N}\varepsilon_j k_{\xi_{r_j}}.$$
Note that
$$|k_{\xi}(z)|e^{-|z|^2/2}=e^{-|z-\xi|^2/2}.$$
Since $\{\xi_{r_j}\}_{j\geq1}$ is uniformly separated, by Theorem~3.29
of \cite{Zhu1}, there exists a constant $A_R>0$ such that
\begin{equation}\label{eq:gaussian-overlap}
\sup_{z\in\mathbb C} \sum_{j=1}^{N} e^{-|z-\xi_{r_j}|^2/2}
\leq A_R.\end{equation}
The constant $A_R$ is independent of $N$. 
If $0<p\leq1$, H\"{o}lder's inequality yields
\begin{align*}
\left\|f_{\varepsilon}\right\|_{p}^p=\frac{p}{2\pi}\inc
|f_{\varepsilon}(z)|^p e^{-p|z|^2/2}\,dA(z)\leq
\frac{p}{2\pi}\sum_{j=1}^{N}\inc e^{-p|z-\xi_{r_j}|^2/2} dA(z)=N.
\end{align*}
Therefore, $\left\|f_{\varepsilon}\right\|_{F^p} \leq N^{1/p}$
for $0<p\leq1$.

In the case when $1<p<\infty$, it follows from
\eqref{eq:gaussian-overlap} that
\begin{align*}
|f_{\varepsilon}(z)|e^{-|z|^2/2}\leq\sum_{j=1}^{N}
e^{-|z-\xi_{r_j}|^2/2}\leq A_R,
\end{align*}
from which we deduce that
\begin{align*}
|f_{\varepsilon}(z)|^p e^{-p|z|^2/2} \leq A_R^{p-1}
\sum_{j=1}^{N}e^{-|z-\xi_{r_j}|^2/2}.
\end{align*}
Therefore,
\begin{align*}
\left\|f_{\varepsilon}\right\|_{p}^p\leq\frac{pA_R^{p-1}}{2\pi}
\sum_{j=1}^{N}\inc e^{-|z-\xi_{r_j}|^2/2}\,dA(z)=pA_R^{p-1}N.
\end{align*}
Equivalently,
$$\left\|f_{\varepsilon}\right\|_{p}\leq p^{1/p}A_R^{1-1/p}N^{1/p},
\qquad 1<p<\infty.$$

Finally, for $p=\infty$, \eqref{eq:gaussian-overlap} gives
$\left\|f_{\varepsilon}\right\|_{\infty} \leq A_R$. In summary,
we obtain for all $0<p\leq +\infty$ that
\begin{equation}\label{eq:synthesis-upper}
\left\|f_{\varepsilon}\right\|_{p}\leq C_{p,R} N^{1/p},
\end{equation}
where $C_{p,R}>0$ is independent of $N$ and the signs
$\varepsilon_1,\ldots,\varepsilon_N$. Here, we use the convention
$N^{1/\infty}=1$.

On the other hand, for any choice of
$\varepsilon_1,\ldots,\varepsilon_N$, we have
$$T_{\varphi_{\beta_0}}^{(s,t)}f_{\varepsilon}=
\sum_{j=1}^{N}\varepsilon_j g_{r_j}.$$
According to the Khintchine inequality \eqref{eq:Khintchine}, there
exists a constant $\kappa_q>0$, depending only on $q$, such that
$$\mathbb E\left|\sum_{j=1}^{N}\varepsilon_j g_{r_j}(z)
\right|^q\geq\kappa_q\left(\sum_{j=1}^{N}|g_{r_j}(z)|^2
\right)^{q/2}$$
for all $z\in\C$. Therefore, by Fubini's theorem,
\begin{align*}
\mathbb E\left\|T_{\varphi_{\beta_0}}^{(s,t)}f_{\varepsilon}
\right\|_{q}^q&=\frac{q}{2\pi}\inc\mathbb E\left|
\sum_{j=1}^{N}\varepsilon_j g_{r_j}(z)\right|^q
e^{-q|z|^2/2}\,dA(z)\\
&\geq\frac{q\kappa_q}{2\pi}\inc\left(\sum_{j=1}^{N}
|g_{r_j}(z)|^2\right)^{q/2}e^{-q|z|^2/2}\,dA(z).
\end{align*}
For every $z\in B(z_{r_j},\rho)$ with $1\leq j\leq N$, we have
$$\left(\sum_{k=1}^{N}|g_{r_k}(z)|^2\right)^{q/2}
\geq |g_{r_j}(z)|^q.$$
Since the disks $B(z_{r_j},\rho)$ are pairwise disjoint,
we further obtain
\begin{align*}
\mathbb E\left\|T_{\varphi_{\beta_0}}^{(s,t)}
f_{\varepsilon}\right\|_{q}^q&\geq\frac{q\kappa_q}{2\pi}
\sum_{j=1}^{N}\int_{B(z_{r_j},\rho)}
|g_{r_j}(z)|^q e^{-q|z|^2/2}\,dA(z).
\end{align*}
Let $c=(c_{s,t}/2)e^{-\rho^2/2}$, which only dependents on
$s$ and $t$. By \eqref{eq:local-lower}, we have
$$|g_{r_j}(z)|^q e^{-q|z|^2/2}\geq c^q,\qquad
z\in B(z_{r_j},\rho).$$
Hence
\begin{align*}
\mathbb E\left\|T_{\varphi_{\beta_0}}^{(s,t)}f_{\varepsilon}
\right\|_{q}^q\geq\frac{q\kappa_q}{2\pi}\sum_{j=1}^{N}
c^q\,\pi\rho^2=\frac{q\kappa_q c^q\rho^2}{2}N.
\end{align*}
It follows that, for each $N$, there is at least one set of signs
$\varepsilon_1,\ldots,\varepsilon_N$ such that
\begin{equation}\label{eq:output-lower}
\left\|T_{\varphi_{\beta_0}}^{(s,t)}f_{\varepsilon}
\right\|_{q}\geq\left(\frac{q\kappa_q c^q\rho^2}{2}
\right)^{1/q}N^{1/q}.
\end{equation}
Suppose, contrary to the assertion, that
$T_{\varphi_{\beta_0}}^{(s,t)}:F^p\rightarrow F^q$
is bounded for some $0<q<p\leq\infty$. For the choice of signs
in \eqref{eq:output-lower}, the boundedness
of $T_{\varphi_{\beta_0}}^{(s,t)}$ and \eqref{eq:synthesis-upper}
imply that
$$\left(\frac{q\kappa_q c^q\rho^2}{2}\right)^{1/q}N^{1/q}
\leq\left\|T_{\varphi_{\beta_0}}^{(s,t)}\right\|_{F^p\to F^q}
C_{p,R}N^{1/p}.$$
This is impossible as $N\to\infty$, because $q<p$. Therefore,
$T_{\varphi_{\beta_0}}^{(s,t)}:F^p\to F^q$ is not bounded
whenever $0<q<p\leq \infty$. This proves part (b) and
completes the proof of the theorem.
\end{proof}

\begin{remark}\label{remsharp}
In fact, we have also proved the sharpness of the constant $\beta_0$
in Theorem A in the sense that, for any sufficient small $\varepsilon>0$,
there exist $\varphi\in L^\infty(\C, d\lambda_{\beta_0+\varepsilon})$
and a sequence $\{z_n\}$ in $\C$ such that
$$\lim_{|z_n|\rightarrow\infty}\ \exp\left(-\frac{1}{2}|z_n|^2\right)
\left\|T_\varphi^{(s,t)}K_{z_n}\right\|_{p}=\infty,$$
and there exist $\varphi\in L^\infty(\C, d\lambda_{\beta_0})$ and a
sequence $\{z_n\}$ in $\C$ such that
$$\exp\left(-\frac{1}{2}|z_n|^2\right)\left\|T_\varphi^{(s,t)}K_{z_n}
\right\|_{p}\nrightarrow0$$
as $n\to\infty$.
\end{remark}


\begin{proposition}\label{propbetasharp}
Let $(s, t)\in\C^2$ with $|s|^2\ge|t|^2+1$, $0<p<\infty$, and
$$\beta_0=\frac{|s|^2-|t|^2-1}{2|s|(|s|+|t|)}.$$
For $\varphi(z)=\exp({\beta}|z|^2)$ with $\beta< 1-|t|/(2|s|)$,
there exists a sequence $\{z_n\}$ in $\C$ such that
$$\lim_{|z_n|\rightarrow\infty}\ \exp\left(-\frac{1}{2}|z_n|^2\right)
\left\|T_\varphi^{(s,t)}K_{z_n}\right\|_{p}=\infty$$
if and only if
$$\beta>\frac{2\beta_0}{1+\sqrt{1-2\beta_0}}.$$
\end{proposition}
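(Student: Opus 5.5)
The plan is to compute $T_\varphi^{(s,t)}K_z$ in closed form, which is possible because $\varphi$ is radial Gaussian, and then evaluate its $F^p$ norm exactly with the Gaussian integral \eqref{eqIC}. Put $\alpha=1-\beta$. The hypothesis $\beta<1-|t|/(2|s|)$ says exactly $\alpha>|t|/(2|s|)$. Since $\varphi(v)\,d\lambda(v)=\frac1\pi e^{-\alpha|v|^2}dA(v)$, the first identity of Lemma~\ref{lemiTK} gives
$$T_\varphi^{(s,t)}K_z(w)=\frac1\alpha\,K^{(s,t)}\!\left(w,\frac z\alpha\right)
=\frac{1}{\alpha\sqrt s}\exp\left(-\frac{\overline t\,\overline z^{\,2}}{2s\alpha^2}\right)
\exp\left(\frac{t}{2s}w^2+\frac{\overline z}{s\alpha}w\right).$$
So, up to a factor depending only on $z$, we need the $F^p$ norm of $e_{a,b}(w)=\exp(\frac a2 w^2+bw)$ with $a=t/s$ and $b=\overline z/(s\alpha)$. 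Here $|a|<1$ because $|s|^2\ge |t|^2+1$.

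Next I will compute $\|e_{a,b}\|_p^p$. Write $|e_{a,b}(w)|^pe^{-p|w|^2/2}$ as the squared modulus of $\exp(\frac{p}{4}aw^2+\frac p2 bw)$ against $e^{-p|w|^2/2}$. After rescaling $w\mapsto w\sqrt{2/p}$ this is an integral of the form \eqref{eqIC} with $\gamma=\delta$ and $|\gamma|=|a|<1$, so the convergence condition holds. The result is $\|e_{a,b}\|_p=C_{p,s,t}\exp(\Phi(b)/2)$, where
$$\Phi(b)=\frac{|b|^2+\re(\overline a\,b^2)}{1-|a|^2}.$$
This is the maximum over $w$ of $2\re(\frac a2w^2+bw)-|w|^2$, as expected for a Gaussian. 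Substituting $b=\overline z/(s\alpha)$, we get
$$\log\Bigl[e^{-|z|^2/2}\|T_\varphi^{(s,t)}K_z\|_p\Bigr]=\mathrm{const}+\frac12\Bigl(A|z|^2+\re(Bz^2)\Bigr),$$
with explicit real $A$ and complex $B$ depending on $\alpha,s,t$:
$$A=\frac{|s|^2}{\alpha^2|s|^2(|s|^2-|t|^2)}-1,\qquad
B=\frac{\overline s\,t}{\alpha^2|s|^2(|s|^2-|t|^2)}-\frac{t}{s\alpha^2}.$$
Both should be double-checked against \eqref{eqIC}. The $B$ term collects the $\overline t\,\overline z^{\,2}$ prefactor and the $\re(\overline a b^2)$ piece.

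Along the ray $z=re^{i\psi}$, the exponent is $\frac{r^2}{2}(A+|B|\cos(2\psi+\arg B))$, and its maximum over directions is $(A+|B|)r^2/2$. If $A+|B|>0$, choosing $z_n$ on the optimal ray gives the required divergence. If $A+|B|\le 0$, the quantity is bounded on all of $\C$, so no sequence can tend to $\infty$. Thus the proposition reduces to proving
$$A+|B|>0 \iff \beta>\frac{2\beta_0}{1+\sqrt{1-2\beta_0}}.$$

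This reduction is the main obstacle, and it is purely algebraic. Simplifying, I expect $|B|=|t|(|s|^2-|t|^2-1)/(\alpha^2|s|(|s|^2-|t|^2))$. Then $A+|B|>0$ becomes
$$\alpha^2<\frac{|s|^2+|s||t|(|s|^2-|t|^2-1)\cdot|s|^{-1}\cdot\ldots}{|s|^2(|s|^2-|t|^2)},$$
and after combining terms this should collapse to $\alpha^2<1-2\beta_0$, using $1-2\beta_0=\frac{|s|+|t|+\ldots}{|s|(|s|+|t|)}$. The threshold is consistent with this: $\beta=2\beta_0/(1+\sqrt{1-2\beta_0})=1-\sqrt{1-2\beta_0}$ is the smaller root of $\beta^2-2\beta+2\beta_0=0$, i.e. of $(1-\beta)^2=1-2\beta_0$. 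I will first verify the identity $(|s|^2-|t|^2)^{-1}\bigl(1+|t|(|s|^2-|t|^2-1)/|s|\bigr)=1-2\beta_0$ directly. Then, since $\alpha>|t|/(2|s|)\ge0$, the condition $\alpha^2<1-2\beta_0$ is equivalent to $\alpha<\sqrt{1-2\beta_0}$, i.e. $\beta>1-\sqrt{1-2\beta_0}$. This yields the stated equivalence; the case $\alpha\ge\sqrt{1-2\beta_0}$ gives boundedness.
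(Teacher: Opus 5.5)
Your proposal is correct and follows essentially the paper's route: Lemma~\ref{lemiTK} gives $T_\varphi^{(s,t)}K_z=\frac1\alpha K^{(s,t)}_{z/\alpha}$. An exact Gaussian evaluation of its $F^p$ norm (the paper cites Proposition~17 of \cite{DZ4}, while you compute it directly from \eqref{eqIC}) gives the growth exponent $\frac{1-2\beta_0-\alpha^2}{2\alpha^2}|z|^2$ along the extremal ray, and the threshold is the root $1-\sqrt{1-2\beta_0}$ of $-\beta^2+2\beta-2\beta_0$.

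One harmless slip: the correct coefficient is $B=-\frac{st(|s|^2-|t|^2-1)}{\alpha^2|s|^2(|s|^2-|t|^2)}$; you have $\overline{s}$ in place of $s$. This leaves $|B|$ unchanged, but the extremal ray is $stz^2=-|stz^2|$. The identity you planned to verify for $1-2\beta_0$ does hold.
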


\begin{proof}
Fix $0<p<\infty$ and take $\alpha=1-\beta$. It is clear that
$\alpha>|t|/(2|s|)$. By Lemma \ref{lemiTK}, we have
\begin{align*}
\left\|T_\varphi^{(s,t)}K_{z}\right\|_{p}=\frac{1}{\alpha}
\left\|K^{(s,t)}_{z/\alpha}\right\|_{p},\qquad z\in\C.
\end{align*}
Combining this with Proposition 17 of \cite{DZ4}, we obtain
\begin{align}\label{eqTvaKz}
\left\|T_\varphi^{(s,t)}K_{z}\right\|_{p}
&=C_\alpha\,\exp\left(\frac{|z|^2}{2\alpha^2(|s|^2-|t|^2)}\right)
\left|\exp\left(-\frac{t(|s|^2-|t|^2-1)}{2\alpha^2\overline{s}
(|s|^2-|t|^2)}{z^2}\right)\right|\nonumber\\
&\leq C_\alpha\,\exp\left(\frac{1-2\beta_0}{2\alpha^2}|z|^2\right),
\end{align}
where
$$C_\alpha=\frac{|s|^{(1/p-1/2)}}{\alpha(|s|^2-|t|^2)^{1/(2p)}}.$$
Furthermore, equality is attained if and only if $stz^2=-|stz^2|$.

Consider the quadratic function
$$H(\beta)=-\beta^2+2\beta-2\beta_0.$$
Since $0\leq \beta_0<1/2$, the equation $H(\beta)=0$ has two
real solutions:
$$\beta_{1}=1-\sqrt{1-2\beta_0},\qquad
\beta_{2}=1+\sqrt{1-2\beta_0}.$$
Simple calculations show that $\beta_{1}<1-|t|/(2|s|)\leq1<\beta_{2}$.
It follows that $H(\beta)\leq 0$ for $\beta\leq\beta_{1}$ and
$H(\beta)> 0$ for $\beta_{1}<\beta< 1-|t|/(2|s|)$.
So, for any $\beta\leq  \beta_{1}$, \eqref{eqTvaKz} gives
\begin{align*}
\exp\left(-\frac{1}{2}|z|^2\right)\left\|T_\varphi^{(s,t)}K_{z}
\right\|_{p}&\leq C_\alpha\,\exp\left(\frac{(1-2\beta_0-\alpha^2)|z|^2}
{2\alpha^2}\right)\\
&=C_\alpha\,\exp\left(\frac{H(\beta)|z|^2}{2\alpha^2}\right)
\leq C_\alpha,\qquad z\in\C.
\end{align*}
If $\beta_{1}<\beta< 1-|t|/(2|s|)$, we let $\{z_n\}$ be a sequence of
nonzero complex numbers with $stz_n^2=-|stz_n^2|$ such that the
sequence $\{|z_n|\}$ is increasing and tends to $\infty$.
Then equality occurs in \eqref{eqTvaKz} for $\{z_n\}$ and we have
$$\exp\left(-\frac{1}{2}|z_n|^2\right)\left\|T_\varphi^{(s,t)}K_{z_n}
\right\|_{p}=C_\alpha\,\exp\left(\frac{H(\beta)|z_n|^2}{2\alpha^2}
\right)\rightarrow \infty$$
as $n\rightarrow\infty$.
\end{proof}

\begin{proposition}\label{propbetasharpinfty}
Let $(s, t)\in\C^2$ with $0\leq|s|^2-|t|^2-1\leq(|s|-|t|)^2$
and let $$\gamma_1=\frac{|s|^2-|t|^2-1}{2|s|(|s|-|t|)}.$$
For $\varphi(z)=\exp({\gamma}|z|^2)$ with
$$\frac{2\gamma_1}{1+\sqrt{1-2\gamma_1}}<\gamma< 1-|t|/(2|s|),$$
there exists a sequence $\{z_n\}$ in $\C$ such that
$$\lim_{|z_n|\rightarrow\infty}\ \exp\left(-\frac{1}{2}|z_n|^2\right)
\left\|T_\varphi^{(s,t)}K_{z_n}\right\|_{\infty}=\infty.$$
\end{proposition}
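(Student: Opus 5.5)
The plan is to reduce everything to an explicit supremum of a Gaussian kernel, as in the proof of Proposition~\ref{propbetasharp}, but now for the $F^\infty$ norm rather than the $F^p$ norm. Put $\alpha=1-\gamma$. The hypothesis $\gamma<1-|t|/(2|s|)$ gives $\alpha>|t|/(2|s|)$, so the first identity of Lemma~\ref{lemiTK} applies with $\varphi(v)e^{-|v|^2}=e^{-\alpha|v|^2}$. It yields
$$T^{(s,t)}_\varphi K_z(w)=\frac1\pi\inc K^{(s,t)}(w,v)K_z(v)e^{-\alpha|v|^2}\,dA(v)=\frac1\alpha K^{(s,t)}(w,z/\alpha).$$
Setting $u=z/\alpha$, the problem becomes to compute, or at least bound from below,
$$\sup_{w\in\C}\left|K^{(s,t)}(w,u)\right|e^{-|w|^2/2}
=\frac{1}{\sqrt{|s|}}\sup_{w}\exp\left(\re\Bigl(\frac{tw^2}{2s}+\frac{w\overline u}{s}-\frac{\overline{t}\,\overline{u}^2}{2s}\Bigr)-\frac{|w|^2}{2}\right).$$

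Next I will maximize the exponent over $w$. Writing $a=t/s$ and $b=\overline u/s$, the exponent is $-\tfrac12|w|^2+\re(\tfrac12 aw^2+bw)$ with $|a|=|t|/|s|<1$. This is a negative definite real quadratic form in $(\re w,\im w)$, so its maximum is attained at a unique critical point. Its value is
$$\frac{|b|^2+\re(\overline a b^2)}{2(1-|a|^2)}.$$
To avoid quoting this formula I can instead plug in an explicit $w$, a real multiple of a suitable unimodular direction, which gives a lower bound; a lower bound is all that blow-up requires. I then add the $u$-only term $-\re(\overline t\,\overline u^2/(2s))$ and choose the argument of $u$, and hence of $z$, so that all the phase-dependent terms align. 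Taking $\arg u$ so that $\re(\overline a b^2)=|a||b|^2$ and the last term also becomes positive, I expect the total exponent to be
$$\Bigl(\frac{1}{2|s|(|s|-|t|)}+\frac{|t|}{2|s|}\Bigr)|u|^2=\frac{1-2\gamma_1}{2}|u|^2,\qquad \gamma_1=\frac{|s|^2-|t|^2-1}{2|s|(|s|-|t|)}.$$
This is the analogue, with $|t|$ replaced by $-|t|$, of the direction $stz^2=-|stz^2|$ used in Proposition~\ref{propbetasharp}. Checking that this algebra really collapses to $(1-2\gamma_1)/2$ is the step I expect to be the main computational obstacle. If the optimal phases of the two terms turn out not to align exactly, I will fall back on the lower bound obtained from the explicit $w$ and verify that its leading coefficient is still $(1-2\gamma_1)/2$.

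With this in hand, along the ray of good phase, with $z_n$ on that ray and $|z_n|\uparrow\infty$,
$$e^{-|z_n|^2/2}\left\|T^{(s,t)}_\varphi K_{z_n}\right\|_\infty\ \ge\ \frac{1}{\alpha\sqrt{|s|}}\exp\left(\frac{1-2\gamma_1-\alpha^2}{2\alpha^2}|z_n|^2\right).$$
The hypothesis $|s|^2-|t|^2-1\le(|s|-|t|)^2$ is exactly $\gamma_1\le1/2$, so $\sqrt{1-2\gamma_1}$ is real. The exponent is positive iff $\alpha<\sqrt{1-2\gamma_1}$, that is, iff $\gamma>1-\sqrt{1-2\gamma_1}=2\gamma_1/(1+\sqrt{1-2\gamma_1})$, which is precisely the assumed lower bound on $\gamma$. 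Hence the right-hand side tends to $\infty$, which proves the proposition.
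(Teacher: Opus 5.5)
Your reduction through Lemma~\ref{lemiTK} to $\frac1\alpha\sup_w|K^{(s,t)}(w,u)|e^{-|w|^2/2}$ with $u=z/\alpha$ is correct. So are your critical-point value $\frac{|b|^2+\re(\overline a b^2)}{2(1-|a|^2)}$ and your closing analysis of $1-2\gamma_1-\alpha^2$. Maximizing the Gaussian by hand is a good self-contained substitute for the paper's appeal to Corollary~19 of \cite{DZ4}.

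The step you single out as the crux, however, is wrong as stated. Both phase-dependent pieces are controlled by the single real number $X=\re(\overline t\,\overline u^{2}/s)=\re(stu^2)/|s|^2$. The critical value equals $\frac{|u|^2+X}{2(|s|^2-|t|^2)}$ and the $u$-only term equals $-X/2$, so the total exponent is
\[
\frac{|u|^2-(|s|^2-|t|^2-1)X}{2(|s|^2-|t|^2)} .
\]
Hence the condition $\re(\overline a b^2)=|a||b|^2$, i.e.\ $stu^2>0$, forces the last term to be $-\frac{|t|}{2|s|}|u|^2<0$. For $t\neq0$ the two phases can never be aligned at once. The identity $\frac{1}{2|s|(|s|-|t|)}+\frac{|t|}{2|s|}=\frac{1-2\gamma_1}{2}$ that you display is false; the true identity carries a minus sign, $\frac{1}{2|s|(|s|-|t|)}-\frac{|t|}{2|s|}=\frac{1-2\gamma_1}{2}$.

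So your target number survives, but for the opposite reason to the one you give. On the ray $stu^2=|stu^2|$, which is the paper's choice $stz_n^2=|s||t||z_n|^2$, the supremum equals $|s|^{-1/2}\exp\bigl(\frac{1-2\gamma_1}{2}|u|^2\bigr)$ exactly. This is, with equality, the lower bound the paper imports from \cite{DZ4}, and your last paragraph then completes the proof; no fallback is needed.

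This is also the worse ray. The coefficient of $X$ above is $-\frac{|s|^2-|t|^2-1}{2(|s|^2-|t|^2)}\le0$, so the ray $stu^2=-|stu^2|$ gives $\frac{1}{2|s|(|s|+|t|)}+\frac{|t|}{2|s|}=\frac{1-2\beta_0}{2}$, with $\beta_0$ as in Proposition~\ref{propbetasharp}. Since $\beta_0\le\gamma_1$, that choice proves blow-up for every $\gamma\in\bigl(2\beta_0/(1+\sqrt{1-2\beta_0}),\,1-|t|/(2|s|)\bigr)$, a larger range, without the hypothesis $|s|^2-|t|^2-1\le(|s|-|t|)^2$.

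One further correction: that hypothesis is not ``exactly'' $\gamma_1\le\frac12$. It is equivalent to $\gamma_1\le\frac{|s|-|t|}{2|s|}$, which merely implies $\gamma_1\le\frac12$; this is harmless for your argument.
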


\begin{proof}
Write $\alpha=1-\gamma$. Then $\alpha>|t|/(2|s|)$.
By Lemma \ref{lemiTK}, we have
\begin{align*}
\left\|T_\varphi^{(s,t)}K_{z}\right\|_{\infty}=\frac{1}{\alpha}
\left\|K^{(s,t)}_{z/\alpha}\right\|_{\infty},\qquad z\in\C.
\end{align*}
It follows from Corollary 19 of \cite{DZ4} that there exists a
sequence $\{z_n\}$ in $\C$ such that $stz_n^2=|s||t||z_n|^2$ and
\begin{align*}
\exp\left(-\frac{1}{2{\alpha}^2}|{z_n}|^2\right)
\left\|K^{(s,t)}_{z_n/\alpha}\right\|_{\infty}\geq
\frac{1}{\sqrt{|s|}}\exp\left[-\frac{|s|^2-|t|^2-1}
{2|s|(|s|-|t|)\alpha^2}|z_n|^2\right]
\end{align*}
for all $n\ge1$. Consequently,
$$\exp\left(-\frac{1}{2}|z_n|^2\right)\left\|T_\varphi^{(s,t)}
K_{z_n}\right\|_{\infty}\geq\frac{1}{\alpha\sqrt{|s|}}\exp\left[
\frac{1-2\gamma_1-\alpha^2}{2\alpha^2}|z_n|^2\right].$$

Just as in the proof of Proposition~\ref{propbetasharp},
we consider the function
$$G(\gamma)=-\gamma^2+2\gamma-2\gamma_1.$$
Since $0\leq|s|^2-|t|^2-1\leq(|s|-|t|)^2$, it is clear that
\begin{equation}\label{eqgamma1}
    \gamma_1=\frac{|s|^2-|t|^2-1}{2|s|(|s|-|t|)}\leq
    \frac{|s|-|t|}{2|s|}\leq\frac{1}{2}.
\end{equation}
The solutions of $G(\gamma)=0$ are again real and given by
$$\gamma_2=1-\sqrt{1-2\gamma_1},\qquad
\gamma_3=1+\sqrt{1-2\gamma_1}.$$
By \eqref{eqgamma1}, we have $$\gamma_2-\left(1-\frac{|t|}{2|s|}
\right)=-\sqrt{1-2\gamma_1}+\frac{|t|}{2|s|}\leq
\frac{\sqrt{|t|}(\sqrt{|t|}-2\sqrt{|s|})}{2|s|}\leq0.$$
It is clear that $\gamma_2=1-|t|/(2|s|)$ if and only if
$$\gamma_{1}=\frac{|s|^2-|t|^2-1}{2|s|(|s|-|t|)}
=\frac{|s|-|t|}{2|s|}\qquad {\rm and} \qquad t=0,$$
But this is impossible. So we have
$$\gamma_2<1-|t|/(2|s|)\leq1<\gamma_3.$$
Since $H(0)=-\gamma_1\leq0$, we have
$G(\gamma)\leq 0$ for $0\leq\gamma\leq  \gamma_2$ and
$G(\gamma)>0$ for $\gamma_2<\gamma< 1-|t|/(2|s|)$.
Thus for any $\gamma_2<\gamma< 1-|t|/(2|s|)$ we have
$$\exp\left(-\frac{1}{2}|z_n|^2\right)\left\|T_\varphi^{(s,t)}
K_{z_n}\right\|_{\infty}\geq\frac{1}{\alpha\sqrt{|s|}}\,\exp
\left[\frac{G(\gamma)|z_n|^2}{2\alpha^2}\right]
\rightarrow\infty$$
as $n\rightarrow\infty$.
\end{proof}

%

As an application, we completely characterize the boundedness and
compactness of a class of Toeplitz-type operators whose symbols
are radial exponential functions.

\begin{corollary}\label{corTvar0}
Let $(s, t)\in\C^2$ with $|s|^2=|t|^2+1$ and
$$\varphi(z)=\exp({\beta}|z|^2),\qquad \beta< 1-|t|/(2|s|).$$
For $0<p,q\leq\infty$ we have the following.
\begin{enumerate}
\item[(a)] If $\beta>0$, the operator $T_\varphi^{(s,t)}$ is not
bounded from $F^p$ to $F^q$;
\item[(b)] If $\beta=0$, the operator $T_\varphi^{(s,t)}=T^{(s,t)}$
is not compact from $F^p$ to $F^q$ and it is bounded from $F^p$
to $F^q$ if and only if $p\leq q$;
\item[(c)] If $\beta<0$, the operator $T_\varphi^{(s,t)}$ is
compact from $F^p$ to $F^q$.
\end{enumerate}
\end{corollary}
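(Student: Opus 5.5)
When $|s|^2=|t|^2+1$ we have $\beta_0=0$, and the three parts of the corollary line up with the three regimes of Theorem~B. Part (c) is direct. If $\beta<0$, then $\varphi=e^{\beta|z|^2}$ lies in $L^\infty(\C,d\lambda_\beta)$ with $\beta<\beta_0=0$, since $|\varphi(z)|e^{-\beta|z|^2}=1$. Theorem~\ref{thmCToep} then shows that $T_\varphi^{(s,t)}$ is compact from $F^p$ to $F^q$ for all $0<p,q\le\infty$.

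For part (a) we argue by contradiction. Suppose $T_\varphi^{(s,t)}:F^p\to F^q$ were bounded for some $p,q$. Since $k_z$ is a unit vector in every $F^p$, the quantity $e^{-|z|^2/2}\|T_\varphi^{(s,t)}K_z\|_q=\|T_\varphi^{(s,t)}k_z\|_q$ would be bounded in $z$. With $\beta_0=0$, the threshold $2\beta_0/(1+\sqrt{1-2\beta_0})$ in Proposition~\ref{propbetasharp} is $0$. So for $\beta>0$ and $q<\infty$, that proposition gives a sequence $z_n$ along which this quantity tends to $\infty$, which is the contradiction.

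The case $q=\infty$ needs a separate argument. My first attempt is to use the elementary point evaluation $\|g\|_\infty\ge e^{-|w|^2/2}|g(w)|$ and compute $T_\varphi^{(s,t)}K_z(w)$ explicitly. Lemma~\ref{lemiTK} gives $T_\varphi^{(s,t)}K_z=\alpha^{-1}K^{(s,t)}(\cdot,z/\alpha)$ with $\alpha=1-\beta$. One then evaluates it at a well-chosen $w$; the natural choice is a multiple of $sz/\alpha$, arranged so that the quadratic terms interfere constructively. This should produce growth of order $\exp(c|z|^2)$ with $c>0$ whenever $\alpha<1$. Alternatively, Proposition~\ref{propbetasharpinfty} applies: here $\gamma_1=0$ and the hypothesis $0\le|s|^2-|t|^2-1\le(|s|-|t|)^2$ holds trivially. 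Its threshold is then $0$, so for $0<\beta<1-|t|/(2|s|)$ it gives the blow-up in the $F^\infty$ norm directly. This is the cleaner route, and I would use it.

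For part (b), $\varphi\equiv1$, so $T_\varphi^{(s,t)}=T^{(s,t)}$ is unitary on $F^2$. Boundedness for $p\le q$ follows from Theorem~\ref{thmBToep}(a), because $\varphi\in L^\infty(\C,d\lambda_0)$. Non-compactness and unboundedness for $p>q$ require the specific symbol $\varphi\equiv1$, not just the abstract $\varphi_{\beta_0}$ of Theorem~\ref{thm:sharpness-beta-fp}. At $\beta_0=0$, however, the symbol $\varphi_{\beta_0}$ of \eqref{eq:counterexample-symbol-fp} reduces to $\exp(0)=1$. Theorem~\ref{thm:sharpness-beta-fp}(b) therefore applies verbatim and gives both non-compactness for all $p,q$ and unboundedness for $p>q$. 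This observation is the crux. The only thing to check is that the lemma and theorem allow $\gamma=\beta_0=0$, which they do.
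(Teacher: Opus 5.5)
Your proposal is correct and follows the paper's proof essentially verbatim: (c) from Theorem~\ref{thmCToep}; (a) from Propositions~\ref{propbetasharp} (for $q<\infty$) and~\ref{propbetasharpinfty} (for $q=\infty$) once $\beta_0=\gamma_1=0$; and (b) from Theorem~\ref{thmBToep}(a) together with Theorem~\ref{thm:sharpness-beta-fp}(b), via the observation $\varphi_{\beta_0}\equiv 1$. Your explicit split into $q<\infty$ and $q=\infty$, and the contradiction argument using that $k_z$ is a unit vector in $F^p$, just spell out what the paper's short argument leaves implicit.
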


\begin{proof}
Let $\beta_0, \gamma_1$ be the constants defined in
Propositions  \ref{propbetasharp} and \ref{propbetasharpinfty}, respectively.
Since $|s|^2=|t|^2+1$, it is clear that $\beta_0=\gamma_1=0$.
If $\beta>0$, then by Propositions \ref{propbetasharp} and \ref{propbetasharpinfty},
there exists a sequence $\{z_n\}$ such that
$$\left\|T_\varphi^{(s,t)}k_{z_n}\right\|_{q}=
\exp\left(-\frac{1}{2}|z_n|^2\right)\left\|T_\varphi^{(s,t)}
K_{z_n}\right\|_{p}\rightarrow\infty,\qquad 0<p\le\infty,$$
as $n\rightarrow\infty$. This proves part (a).

Let $\varphi_{\beta_0}$ be defined as in
Theorem~\ref{thm:sharpness-beta-fp}. Since $\beta_0=0$, it is
clear that $\varphi_{\beta_0}\equiv 1$. So parts (b) and (c)
follow from Theorems~\ref{thmBToep}, \ref{thmCToep},
and \ref{thm:sharpness-beta-fp}.
\end{proof}

\section{Toeplitz-type operators in Schatten classes}

Let $\mathcal{H}$ be a separable Hilbert space. For $0 < p < \infty$,
the Schatten $p$-class, denoted by $S_p$, consists of all compact
operators $T$ on $\mathcal{H}$ such that
$$|T|x :=(T^*T)^{1/2}x= \sum_{j=1}^\infty \lambda_j \langle x, e_j
\rangle_\mathcal{H}e_j$$
for all $x \in \mathcal{H}$, where $\{e_j\}$ is an orthonormal set
in $\mathcal{H}$ and $\{\lambda_j\}$ is a non-increasing sequence
in $l^p$ of positive numbers. The numbers $\lambda_j$ 
are usually called
the singular values of $T$.
If $T\in S_p$ with singular values $\{\lambda_j\}$, we will write
$$\|T\|_{S_p} = \left( \sum_{j=1}^\infty \lambda_j ^p
\right)^{\frac{1}{p}}.$$
In particular, $S_1$ is the trace class, and if $T \in S_1$ with
$T \ge 0$, then
$$\|T\|_{S_1} = \tr (T) = \sum_{j=1}^\infty \left\langle T e_j,
e_j \right\rangle_\mathcal{H}$$
for any orthonormal basis $\{e_j\}$ of $\mathcal{H}$.
When $p=2$, the space $S_2$ is usually called the Hilbert-Schmidt
class, and
$$\|T\|^2_{S_2}=\sum_{j=1}^\infty\|Te_j\|^2$$
for any orthonormal basis $\{e_j\}$ of $\mathcal H$.

It is known that $T \in S_p$ if and only if $T^*T \in S_{\frac{p}{2}}$,
and the norm $\|T\|_{S_p}^p$ equals the trace of
$(T^*T)^{\frac{p}{2}}$, i.e.,
\begin{equation}\label{eqSp}
\|T\|_{S_p}^p = \tr\left((T^*T)^{\frac{p}{2}}\right)
=\sum_{j=1}^\infty \left\langle (T^*T)^{\frac{p}{2}} e_j,
e_j \right\rangle_\mathcal{H}.
\end{equation}
We refer the reader to \cite{Zhu2} for more information about
Schatten classes.

Recall that if $T: F^2\to F^2$ is a (possibly unbounded) linear operator
whose domain contains all kernel functions $K_z$, then the function
$$(z,w)\in\C^2\mapsto\langle Tk_z, k_w\rangle$$
is called the bivariate Berezin transform of $T$. The restriction of the
bivariate Berezin transform to the diagonal, namely, the function
$$z\in\C\mapsto\langle Tk_z, k_z\rangle$$
is called the Berezin transform of $T$. The following result gives an
integral expression for the bivariate Berezin transform of
$T_\varphi^{(s,t)}$.

\begin{proposition}\label{propBT}
Let $(s, t)\in\C^2$ with $|s|^2\geq|t|^2+1$ and
$$\beta_0=\frac{|s|^2-|t|^2-1}{2|s|(|s|+|t|)}.$$
Then for any $\varphi\in L^\infty(\C,d\lambda_{\beta_0})$,
$$\langle T_\varphi^{(s,t)}k_w, k_z\rangle=\inc \varphi(u)
K^{(s,t)}(z,u)K(u,w)e^{-\frac{|z|^2+|w|^2}{2}}d\lambda(u).$$
In particular, the Berezin transform of the bounded linear operator
$T_\varphi^{(s,t)}$ is given by
$$\widetilde{T_\varphi^{(s,t)}}(z)=e^{-|z|^2}\inc \varphi(u)
K^{(s,t)}(z,u)K(u,z)d\lambda(u).$$
\end{proposition}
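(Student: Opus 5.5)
The formula follows by unwinding the definitions and justifying one application of Fubini's theorem. By Theorem~\ref{thmBToep}(a) with $p=q=2$, $T_\varphi^{(s,t)}$ is bounded on $F^2$, so the Berezin transform makes sense. Since $k_w=e^{-|w|^2/2}K_w$, the definition $T^{(s,t)}_\varphi f=T^{(s,t)}(\varphi f)$ gives
$$T_\varphi^{(s,t)}k_w(\zeta)=e^{-|w|^2/2}\inc K^{(s,t)}(\zeta,u)\varphi(u)K(u,w)\,d\lambda(u),\qquad \zeta\in\C.$$
Here we use $K_w(u)=K(u,w)$. This integral converges absolutely for every $\zeta$, by the second identity of Lemma~\ref{lemiTK} with $\alpha=1-\beta_0>1/2>|t|/(2|s|)$, since $|\varphi(u)|e^{-|u|^2}\le\|\varphi\|_{\infty,\beta_0}e^{-(1-\beta_0)|u|^2}$.

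Next, $T_\varphi^{(s,t)}k_w$ is an entire function in $F^2$. The reproducing property then gives
$$\langle T_\varphi^{(s,t)}k_w,k_z\rangle=e^{-|z|^2/2}\,T_\varphi^{(s,t)}k_w(z).$$
Substituting the integral above with $\zeta=z$ yields exactly the stated bivariate formula. This route avoids any Fubini argument over the double integral in $\langle\cdot,\cdot\rangle$. The only point to check is that $T_\varphi^{(s,t)}k_w$, defined pointwise by the integral, agrees with the $F^2$ element produced by the bounded operator. This is automatic, because $T_\varphi^{(s,t)}$ is defined on $F^2$ through that very integral formula, which converges absolutely by Lemma~\ref{lemiTK}. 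Its holomorphy in $\zeta$ follows from differentiating under the integral sign, which is justified by local uniform domination obtained from the same lemma.

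If one prefers to verify the formula directly from $\inc T_\varphi^{(s,t)}k_w\,\overline{k_z}\,d\lambda$, the obstacle is justifying the interchange of integrals. This requires
$$\inc\inc|K^{(s,t)}(\zeta,u)||\varphi(u)||K(u,w)||K(\zeta,z)|\,d\lambda(u)\,d\lambda(\zeta)<\infty.$$
Bound the inner $u$-integral by Lemma~\ref{lemiTK}; it grows like $\exp(\alpha|\zeta+s w|^2/(4\alpha^2|s|^2-|t|^2))$ times a Gaussian factor. Then check that the resulting $\zeta$-exponent, combined with $|K(\zeta,z)|e^{-|\zeta|^2}$, is integrable. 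This is precisely the computation behind Proposition~\ref{propNTKap} with $p=1$, which gives a finite value. Alternatively, and more simply, Proposition~\ref{propc2} together with the pointwise bound $|K_w(u)|e^{-|u|^2/2}\le e^{|w|^2/2}$ controls $e^{-|\zeta|^2/2}\inc|K^{(s,t)}(\zeta,u)\varphi(u)K_w(u)|\,d\lambda(u)$ uniformly in $\zeta$. The factor $|K(\zeta,z)|e^{-|\zeta|^2/2}=e^{|z|^2/2}e^{-|\zeta-z|^2/2}$ is integrable in $\zeta$, so the double integral is finite. With this in hand, Fubini's theorem and the reproducing property in $\zeta$ give $\inc K^{(s,t)}(\zeta,u)\overline{K(\zeta,z)}\,d\lambda(\zeta)=K^{(s,t)}(z,u)$, since $K^{(s,t)}(\cdot,u)\in F^2$, and the formula follows.

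Finally, setting $w=z$ gives the Berezin transform formula, since $e^{-(|z|^2+|z|^2)/2}=e^{-|z|^2}$.
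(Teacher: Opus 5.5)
Your proposal is correct and follows the paper's own argument: boundedness on $F^2$ (Theorem~\ref{thmBToep}(a)) puts $T_\varphi^{(s,t)}k_w$ in $F^2$, and the reproducing property $\langle T_\varphi^{(s,t)}k_w,k_z\rangle=e^{-|z|^2/2}\,(T_\varphi^{(s,t)}k_w)(z)$ gives the formula once the defining integral is written out. Your absolute-convergence check via Lemma~\ref{lemiTK} with $\alpha=1-\beta_0$ and your alternative Fubini verification are both sound, but they go beyond what the paper needs and do not change the route.
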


\begin{proof}
By Theorem B, we have $T_\varphi^{(s,t)}k_w\in F^2$. Consequently,
$$\langle T_\varphi^{(s,t)}k_w, k_z\rangle=e^{-\frac{|z|^2
+|w|^2}{2}} \langle T_\varphi^{(s,t)} K_w, K_z\rangle
=e^{-\frac{|z|^2+|w|^2}{2}}\left(T_\varphi^{(s,t)}K_w\right)(z).$$
The desired result is then obvious.
\end{proof}

We are now able to prove the following theorem, which shows that
each compact Toeplitz-type operator that appeared in
Theorem~\ref{thmCToep} actually belongs to the Schatten class $S_p$
for all $p>0$. Our proof here employs techniques similar to those used
in the proofs of \cite[Theorems 10]{DZ4}.

\begin{theorem}\label{thmToeptr}
Let $(s, t)\in\C^2$ with $|s|^2\geq|t|^2+1$ and let
$\varphi\in L^\infty(\C, d\lambda_\beta)$ with
$$\beta<\beta_0=\frac{|s|^2-|t|^2-1}{2|s|(|s|+|t|)}.$$
Then $T_\varphi^{(s,t)}$ on $F^{2}$ belongs to the Schatten
$p$-class for all $p>0$. Moreover,
$$\tr\left(T_\varphi^{(s,t)}\right)=\inc \varphi(w) K^{(s,t)}(w,w)
\,d\lambda(w).$$
\end{theorem}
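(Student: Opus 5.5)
The plan is to prove membership in $S_p$ for small $p$ first; since $S_p\subset S_{p'}$ for $p\le p'$, it suffices to treat $0<p\le 1$. For this range I would use the standard criterion: if $\{e_n\}$ is an orthonormal basis of $F^2$ and $\sum_n\|Te_n\|^p<\infty$ with $0<p\le 2$, then $T\in S_p$ (indeed $\|T\|_{S_p}^p\le\sum_n\|Te_n\|^p$ for $0<p\le 2$). Instead of the monomial basis, I would work with the lattice/atomic structure used in the proof of Theorem~\ref{thmCToep}, which handles the Gaussian decay directly. Concretely, I would pick $\varepsilon_0=\beta_0-\beta>0$ and $\gamma=(\beta+\beta_0)/2$, as in that proof. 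Then $\varphi(w)e^{\varepsilon_0|w|^2/2}\in L^\infty(\C,d\lambda_\gamma)$, so Proposition~\ref{propc2} applies with decay $e^{-\delta|z|^2/2}$.

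The key factorization will be $T_\varphi^{(s,t)}=A\,M$. Here $M$ is multiplication by $e^{-\varepsilon|w|^2/2}$ (suitably interpreted) and $A$ is an integral operator whose kernel satisfies the two-sided Gaussian bound coming from Propositions~\ref{propc2} and \ref{propc1}. The more concrete route is the one from \cite[Theorem 10]{DZ4}. Take the monomial basis $e_n(w)=w^n/\sqrt{n!}$ and estimate $\|T_\varphi^{(s,t)}e_n\|_2$ pointwise. We have
\[
|T_\varphi^{(s,t)}e_n(z)|e^{-|z|^2/2}\le \frac1\pi\int|K^{(s,t)}(z,w)||\varphi(w)|e^{-\frac{|z|^2+|w|^2}{2}}\,|e_n(w)|e^{-|w|^2/2}\,dA(w).
\]
Next, use $|e_n(w)|e^{-|w|^2/2}\le \bigl(|w|^{2n}e^{-(1-\varepsilon_0)|w|^2}/n!\bigr)^{1/2}e^{-\varepsilon_0|w|^2/2}$. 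The quantity $|w|^{2n}e^{-(1-\varepsilon_0)|w|^2}/n!$ is at most $(1-\varepsilon_0)^{-n}\times$(a factor of Stirling size $\lesssim 1$). Absorbing $e^{\varepsilon_0|w|^2/2}$ into $\varphi$ loses a factor, so this crude bound gives only $\|Te_n\|\lesssim(1-\varepsilon_0)^{-n/2}$, which grows. I would therefore refine it: split $e^{-\varepsilon_0|w|^2/2}=e^{-\varepsilon_0|w|^2/4}e^{-\varepsilon_0|w|^2/4}$. One half goes to $\varphi$ (still with exponent $<\beta_0$), and the other half combines with $|e_n|$ to give $\sup_w |w|^ne^{-(1+\varepsilon_0/2)|w|^2/2}/\sqrt{n!}\approx(1+\varepsilon_0/2)^{-n/2}$. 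This yields $\|T_\varphi^{(s,t)}e_n\|_2\le C\,q^n$ with $q=(1+\varepsilon_0/2)^{-1/2}<1$, after integrating the $e^{-\delta|z|^2/2}$ bound in $z$. Then $\sum_n\|Te_n\|^p<\infty$ for every $p>0$, which gives $T\in S_p$ for all $p>0$.

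For the trace formula I would use $\tr(T)=\sum_n\langle Te_n,e_n\rangle$, which is valid once $T\in S_1$. Writing
\[
\langle Te_n,e_n\rangle=\int\!\!\int K^{(s,t)}(z,w)\varphi(w)e_n(w)\overline{e_n(z)}\,d\lambda(w)\,d\lambda(z),
\]
I would sum over $n$ using $\sum_n e_n(w)\overline{e_n(z)}=K(w,z)$ and interchange sum and integrals. Then the reproducing property of $K$ in $z$ applied to $z\mapsto K^{(s,t)}(z,w)$ gives $K^{(s,t)}(w,w)$, since $K^{(s,t)}(\cdot,w)\in F^2$. The result is $\int\varphi(w)K^{(s,t)}(w,w)\,d\lambda(w)$.

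The main obstacle is justifying the interchanges, via Fubini/dominated convergence. I would dominate $\sum_n|e_n(w)||e_n(z)|\le e^{(|z|^2+|w|^2)/2}$, so that absolute convergence reduces to finiteness of $\int\!\!\int|K^{(s,t)}(z,w)||\varphi(w)|e^{-\frac{|z|^2+|w|^2}{2}}\,dA\,dA$. This is finite by Proposition~\ref{propc2}, which gives $\le Ce^{-\delta|z|^2/2}$, an integrable function of $z$. The same estimate also shows that the trace integral converges absolutely.
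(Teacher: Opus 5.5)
Your proposal is correct. For the Schatten-class part it takes a different route from the paper; the trace formula is essentially the paper's argument.

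Writing $T=T_\varphi^{(s,t)}$, the paper argues basis-free. It writes $\|T\|_{S_p}^p=\int\langle|T|^pk_z,k_z\rangle e^{|z|^2}\,d\lambda(z)$ and applies $\langle|T|^pk_z,k_z\rangle\le\langle|T|^2k_z,k_z\rangle^{p/2}$ for $0<p\le2$. This bounds the norm by $\frac1\pi\int\|TK_z\|_2^p\,e^{-p|z|^2/2}\,dA(z)$, and Theorem A finishes the job.

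You use the discrete form of the same concavity inequality, $\|T\|_{S_p}^p\le\sum_n\|Te_n\|^p$, on the monomial basis. You get the decay from Proposition~\ref{propc2} rather than Theorem A, by shifting a Gaussian factor into the symbol. The bookkeeping that works is $|w|^ne^{-|w|^2/2}/\sqrt{n!}=e^{\varepsilon_0|w|^2/4}\cdot|w|^ne^{-(1+\varepsilon_0/2)|w|^2/2}/\sqrt{n!}$. Here $\varphi e^{\varepsilon_0|w|^2/4}\in L^\infty(\C,d\lambda_{\beta+\varepsilon_0/4})$, and the second factor is at most $(1+\varepsilon_0/2)^{-n/2}$ because $n!\ge(n/e)^n$. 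Your verbal description of the ``split'' has the signs muddled, but this is what your displayed supremum actually uses.

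What your route buys: the whole theorem rests on the single Schur-type estimate of Proposition~\ref{propc2}, which is needed for the trace anyway. Also, the explicit bound $\|Te_n\|_2\le Cq^n$ gives exponential decay of the singular values, by truncating to $\mathrm{span}\{e_0,\dots,e_{N-1}\}$. The paper's route is shorter once Theorem A is available and avoids choosing a basis.

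For the trace, the paper's identity $\tr(T)=\frac1\pi\int\widetilde{T}\,dA$ is exactly your basis sum. It is justified by the same domination, $e^{|z||w|}\le e^{(|z|^2+|w|^2)/2}$ together with Proposition~\ref{propc2}. The only difference is how the inner integral is evaluated. You use the reproducing property, which is legitimate since $K^{(s,t)}(\cdot,w)\in F^2$ when $|t|<|s|$. The paper instead conjugates the kernel, $\overline{K^{(s,t)}(z,w)}=\frac{\sqrt{\overline s}}{\overline{\sqrt s}}K^{(\overline s,-t)}(w,z)$, and applies \eqref{eqTKw}. Yours is the more direct of the two.
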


\begin{proof}
Fix any $p>0$ and write
$$\left|T_\varphi^{(s,t)}\right|=\sqrt{\left(T_\varphi^{(s,t)}
\right)^*T_\varphi^{(s,t)}}\qquad {\rm and}\qquad S
=\sqrt{\left|T_\varphi^{(s,t)}\right|^{{p}}}.$$
For any orthonormal basis $\{e_n\}$ of $F^2$, it follows from
\eqref{eqSp} that
\begin{align*}
\|T_\varphi^{(s,t)}\|_{S_p}^p=\sum_{n=0}^{\infty}\left\langle
\left|T_\varphi^{(s,t)}\right|^{p} e_n,e_n\right\rangle=
\sum_{n=0}^{\infty}\inc\left| \left \langle S e_n,K_z\right
\rangle\right|^2\,d\lambda(z).
\end{align*}
Then by Fubini's theorem and Parseval's identity, we obtain
\begin{align*}
\|T_\varphi^{(s,t)}\|_{S_p}^p&=\inc \sum_{n=0}^{\infty}
\left| \left \langle S K_z, e_n\right\rangle\right|^2d\lambda(z)
=\inc \|SK_z\|^2d\lambda(z)\\
&= \inc  \left\langle \left|T_\varphi^{(s,t)}\right|^{p} k_z,
k_z\right\rangle e^{|z|^2}d\lambda(z).
\end{align*}
If $0< p\leq2$, it follows from Lemma 3.4 of \cite{Zhu1} that
\begin{align*}
\|T_\varphi^{(s,t)}\|_{S_p}^p\leq \inc  \left\langle
\left|T_\varphi^{(s,t)}\right|^2 k_z,k_z\right
\rangle^{\frac{p}{2}} e^{|z|^2}\,d\lambda(z)
=\frac{1}{\pi}\inc \|T_\varphi^{(s,t)}K_z\|_{2}^p
e^{\frac{-p|z|^2}{2}}\,dA(z).
\end{align*}
Combining this with Theorem A, we see that $T_\varphi^{(s,t)}
\in S_p$ for any $0< p\leq2$. Since $S_{p_1}\subset S_{p_1}$
for $0<p_1<p_2<\infty$, we conclude that $T_\varphi^{(s,t)}
\in S_p$ for any $p>2$ as well.

For the trace formula, by Proposition 3.3 of \cite{Zhu1} and
Proposition~\ref{propBT}, we have
\begin{align*}
\tr\left(T_\varphi^{(s,t)}\right)=\frac{1}{\pi}\inc
\widetilde{T_\varphi^{(s,t)}}(z)\,dA(z)=\inc\left[\inc \varphi(w)
K^{(s,t)}(z,w)K_z(w)d\lambda(w)\right]\,d\lambda(z).
\end{align*}
By Proposition \ref{propc2} and the fact that $|K_z(w)|\le
\exp\left({|z|^2}/{2}+{|w|^2}/{2}\right)$, there exist positive constants
$\delta$ and $C$ such that
\begin{align*}
\inc&\left[\inc |\varphi(w)| \left|K^{(s,t)}(z,w)\right||K_z(w)|
\,d\lambda(w)\right]\,d\lambda(z)\\
&\leq\frac{1}{\pi^2} \inc\left[\inc|\varphi(w)| \left|K^{(s,t)}(z,w)
\right|e^{-\frac{|z|^2+|w|^2}{2}}\,dA(w)\right]\,dA(z)\\
&\leq \frac{C}{\pi^2}\inc\exp\left(-\frac{\delta}{2}|z|^2
\right)\,dA(z)<\infty.
\end{align*}
Since
$$\overline{K^{(s,t)}(z,w)}=\frac{\sqrt{\overline s}}
{\overline{\sqrt s}}\,K^{(\overline s,-t)}(w,z),$$
an application of Fubini's theorem and \eqref{eqTKw} yields
\begin{align*}
\tr\left(T_\varphi^{(s,t)}\right)&=\frac{\sqrt{\overline s}}
{\overline{\sqrt s}}\inc \varphi(w)\left[\overline{\inc K^{(\overline s,-t)}(w,z)K_w(z)\,d\lambda(z)}\right] d\lambda(w)\\
&=\inc \varphi(w) K^{(s,t)}(w,w) d\lambda(w).
\end{align*}
This completes the proof of the theorem.
\end{proof}

Clearly, Theorem B follows from Theorems~\ref{thmBToep},
\ref{thmCToep}, \ref{thm:sharpness-beta-fp} and \ref{thmToeptr}.

\section{Toeplitz-type operators with bounded symbols}

Recall that, for $\varphi\in L^\infty(\mathbb C,dA)$, the
classical Toeplitz operator $T_\varphi$ on $F^{2}$ is defined by
$T_\varphi f=P(\varphi f)$. Since $P=T^{(1,0)}$, it follows from
Theorem B that $T_\varphi$ is a bounded linear operator on $F^{p}$
for any $0<p<\infty$.

\begin{proposition}\label{prop:Toeplitz-old-new}
Let $(s, t)\in\C^2$ with $|s|^2\geq|t|^2+1$, $1\leq p\leq\infty$,
and $\varphi\in L^\infty(\C, dA)$. Then on $F^p$,
$$T_\varphi^{(s,t)}=T^{(s,t)}T_\varphi.$$
\end{proposition}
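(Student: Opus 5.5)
We need to show that $T^{(s,t)}(\varphi f)=T^{(s,t)}(P(\varphi f))$ for $f\in F^p$, $1\le p\le\infty$. In other words, we must show that $T^{(s,t)}$ annihilates the part of $\varphi f$ orthogonal to $F^2$. The natural route is the reproducing property of the kernel in the second variable. For fixed $z$, the function $w\mapsto \overline{K^{(s,t)}(z,w)}$ is, up to a constant, $\exp\bigl(\frac{\overline t}{2\overline s}\,\cdots\bigr)$, i.e. $K^{(\overline s,-t)}(w,z)$ up to the unimodular factor $\sqrt{\overline s}/\overline{\sqrt s}$, as noted in the proof of Theorem~\ref{thmToeptr}. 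So it is an entire function of $w$. Its growth is of order $\exp\bigl(\frac{|t|}{2|s|}|w|^2+\frac{|z||w|}{|s|}\bigr)$, and $|t|/(2|s|)<1/2$.

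The plan, for fixed $z$, has three steps.

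\emph{Step 1.} Put $h_z(w)=\overline{K^{(s,t)}(z,w)}$. We check that $h_z\in F^{p'}$ for every $p'$, including $p'=1$. This holds because $|h_z(w)|e^{-|w|^2/2}\le C_z\exp\bigl(-(\tfrac12-\tfrac{|t|}{2|s|})|w|^2+\tfrac{|z||w|}{|s|}\bigr)$, which is integrable to any power since $|s|>|t|$.

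\emph{Step 2.} Write
$$T_\varphi^{(s,t)}f(z)=\inc \varphi(w)f(w)\overline{h_z(w)}\,d\lambda(w)=\langle \varphi f,h_z\rangle_{L^2(d\lambda)}.$$
Because $\varphi f\in L^p_g$ and $h_z\in L^{p'}_g$, this pairing converges absolutely. Since $h_z\in F^{p'}$ and $1\le p'\le\infty$ (or $p'=1$ when $p=\infty$), the reproducing formula gives $h_z=Ph_z$. Hence
$$\langle \varphi f,h_z\rangle=\langle \varphi f,Ph_z\rangle=\langle P(\varphi f),h_z\rangle.$$
The second equality is Fubini applied to the double integral $\iint \varphi(u)f(u)\overline{K(w,u)}\,\overline{h_z(w)}\,d\lambda(u)\,d\lambda(w)$, using the self-adjointness of the kernel $K(w,u)=\overline{K(u,w)}$.

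\emph{Step 3.} The last expression equals $T^{(s,t)}(T_\varphi f)(z)$, because $T_\varphi f=P(\varphi f)\in F^p$ (classical Toeplitz boundedness, or Theorem B with $(s,t)=(1,0)$) and this is the definition of $T^{(s,t)}$ applied to it. Since $z$ was arbitrary, this gives $T_\varphi^{(s,t)}=T^{(s,t)}T_\varphi$ on $F^p$.

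The main obstacle is justifying Fubini in Step 2, i.e. showing
$$\iint |\varphi(u)f(u)|\,|K(w,u)|\,|h_z(w)|\,d\lambda(u)\,d\lambda(w)<\infty.$$
The Bergman projection kernel $|K(w,u)|e^{-(|u|^2+|w|^2)/2}=e^{-|u-w|^2/2}$ is integrable in each variable uniformly in the other. This means $|P|$ (the integral operator with $|K|$) is bounded on $L^q_g$ for all $1\le q\le\infty$, by the same Schur/interpolation argument as in Theorem~\ref{thmBToep}. Therefore the inner integral $\int |K(w,u)||h_z(w)|\,d\lambda(w)$ lies in $L^{p'}_g$ as a function of $u$. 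Hölder's inequality against $\varphi f\in L^p_g$ then finishes the argument. The endpoint $p=\infty$ (with $p'=1$) is covered by $L^1_g$ boundedness. The $p=1$ case uses $h_z\in F^\infty$ and $L^\infty_g$ boundedness.
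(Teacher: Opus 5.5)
Your proof is correct and is essentially the paper's argument. After renaming $u\leftrightarrow w$, your double integral is exactly the one the paper shows to be absolutely convergent before interchanging the order of integration, and your identity $Ph_z=h_z$ is, after conjugation, precisely \eqref{eqTKw}, i.e.\ $\inc K^{(s,t)}(z,u)K_w(u)\,d\lambda(u)=K^{(s,t)}(z,w)$.

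The differences lie only in the justifications. You obtain that identity from the reproducing property of $F^2$, which is legitimate since $|t|<|s|$ puts $h_z$ in every $F^q$; the paper instead gets it from the Gaussian integral \eqref{eqIC}. You also verify absolute convergence via the $L^q_g$-boundedness of the Gaussian convolution operator $|P|$ plus H\"older, rather than the paper's explicit bound $C_{z,\varepsilon}e^{-\delta|u|^2-\eta|w|^2}$. One small slip: the kernel in your double integral should be $K(w,u)=\overline{K(u,w)}$, not $\overline{K(w,u)}$; this is harmless for the absolute-convergence estimate.
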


\begin{proof}
We first justify the application of Fubini's theorem. Write
$\rho=|t|/|s|<1$ and choose $\varepsilon$ such that
$\frac{1}{2}<\varepsilon<1-\frac{\rho}{2}$. For each
fixed $z\in\C$, we have
\begin{multline*}
\left|K^{(s,t)}(z,u)K_w(u)\right|\exp\left({-|u|^2-|w|^2}
\right)\\\leq C_z\exp\left[-\left(1-\frac{\rho}{2}\right)|u|^2
+|u||w|+\frac{|z|}{|s|}|u|-|w|^2\right]
\end{multline*}
for some constant $C_z>0$. Since
$$|u||w| \leq \varepsilon |u|^2+\frac{1}{4\varepsilon}|w|^2,
\qquad\frac{|z||u|}{|s|} \leq \frac{2-\rho-2\varepsilon}{4}
|u|^2+\frac{|z|^2}{(2-\rho-2\varepsilon)|s|^2},$$
we obtain
$$\left|K^{(s,t)}(z,u)K_w(u)\right|\exp\left({-|u|^2-|w|^2}\right)
\leq C_{z,\varepsilon}\exp\left(-\delta|u|^2-\eta|w|^2\right),$$
where $C_{z,\varepsilon}=C_z\exp\left(|z|^2/[(2-\rho-2\varepsilon)|s|^2]\right)$,
and
$$\delta=\frac{1}{2}\left(1-\frac{\rho}{2}-\varepsilon\right)>0,
\qquad \eta=1-\frac{1}{4\varepsilon}>\frac{1}{2}.$$
Fix any $1\leq p\leq\infty$ and $f\in F^p$. We easily obtain
$$\inc|f(w)|e^{-\eta|w|^2}\,dA(w)\leq C_{p,\eta}\|f\|_{p}$$
for some constant $C_{p,\eta}>0$. Consequently,
\begin{multline*}
\int_{\mathbb C}\inc\left|K^{(s,t)}(z,u)K_w(u)\varphi(w)f(w)
\right|\,d\lambda(w)\,d\lambda(u)\\
\leq C_{z,\varepsilon}\|\varphi\|_{\infty,0}\left(\int_{\mathbb C}
e^{-\delta|u|^2}\,dA(u)\right)\left(\int_{\mathbb C}|f(w)|
e^{-\eta|w|^2}\,dA(w)\right)<\infty.
\end{multline*}
From the integral representation
$$T_\varphi f(u)=\inc K_w(u)\varphi(w)f(w)\,d\lambda(w),$$
Fubini's theorem, and \eqref{eqTKw}, we obtain
\begin{align*}
T^{(s,t)}T_\varphi f(z)&=\inc K^{(s,t)}(z,u)\left(\inc
K_w(u)\varphi(w)f(w)\,d\lambda(w)\right)\,d\lambda(u)\\
&=\inc\left(\inc K^{(s,t)}(z,u)K_w(u)\,d\lambda(u)
\right)\varphi(w)f(w)\,d\lambda(w)\\
&=\inc K^{(s,t)}(z,w)\varphi(w)f(w)\,d\lambda(w)
=T_\varphi^{(s,t)}f(z),
\end{align*}
which proves the desired result.
\end{proof}

In view of the above proposition, we will consider the compactness
of Toeplitz-type operators with bounded symbols on $F^p$ for
$1\leq p\leq\infty$. Recall from Theorem~B of \cite{DZ4} that if
$|s|^2>1+|t|^2$, then $T^{(s,t)}$ is compact on $F^p$ for all
$0<p\leq\infty$. From this and Proposition~\ref{prop:Toeplitz-old-new}
we conclude that $T_\varphi^{(s,t)}$ is compact on $F^p$ for
$1\leq p\leq\infty$ whenever $|s|^2>1+|t|^2$ and
$\varphi\in L^\infty(\C, dA)$. Note that this conclusion also follows
directly from Theorem ~\ref{thmCToep}.

When $|s|^2=1+|t|^2$, we first consider the case $p=2$.
In this case, by Theorem A of \cite{DZ3}, $T^{(s,t)}$ is unitary on
$F^2$. So $T_\varphi^{(s,t)}$ is compact on $F^2$ if and only if
$T_\varphi$ is compact on $F^2$, which is known to be equivalent
to the condition $\widetilde{\varphi}(z)\to0$ as $|z|\to\infty$, where
$$\widetilde{\varphi}(z)= \frac{1}{\pi}\int_{\mathbb C}
\varphi(w)e^{-|z-w|^2}\,dA(w).$$
For the case $p\neq2$, we need the following.

\begin{lemma}\label{lemTinver}
Let $(s, t)\in\C^2$ with $|s|^2=|t|^2+1$ and $1\leq p\leq\infty$.
Then $T^{(s,t)}$ is invertible on $F^p$ and
$\left[T^{(s, t)}\right]^{-1}=c\,T^{(\overline{s}, -t)}$,
where $c=|s|/(\sqrt{s}\sqrt{\overline{s}})$.
\end{lemma}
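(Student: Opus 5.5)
Since $|s|^2=|t|^2+1$ makes $\beta_0=0$, Theorem~\ref{thmBToep}(a) with $\varphi\equiv1$ shows that both $T^{(s,t)}$ and $T^{(\overline s,-t)}$ are bounded on $F^p$; note $|\overline s|^2=|-t|^2+1$ as well. The plan is therefore to verify the two operator identities
$$T^{(\overline s,-t)}T^{(s,t)}=c^{-1}I,\qquad T^{(s,t)}T^{(\overline s,-t)}=c^{-1}I$$
on $F^p$. I will do this first on the reproducing kernels $K_w$, and then extend to all of $F^p$.

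\textbf{Step 1 (kernels).} By \eqref{eqTKw}, $T^{(s,t)}K_w=K^{(s,t)}_w=K^{(s,t)}(\cdot,w)$. Next I compute $T^{(\overline s,-t)}$ applied to $K^{(s,t)}(\cdot,w)$, which is the integral
$$\inc K^{(\overline s,-t)}(z,u)K^{(s,t)}(u,w)\,d\lambda(u).$$
The integrand has the form in \eqref{eqIC}, with $\gamma=t/s$, $\overline\delta=\overline t/\overline s$, $a=\overline w/s$ and $\overline b=z/\overline s$. Then $1-\gamma\overline\delta=1-|t|^2/|s|^2=1/|s|^2$. The hypothesis of \eqref{eqIC} requires $|\gamma+\delta|^2<4$; here $|\gamma+\delta|\le2|t|/|s|<2$, so \eqref{eqIC} applies. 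Substituting, I expect the exponent to collapse to $z\overline w$, with the Gaussian terms in $z^2$ and $\overline w^2$ cancelling against the prefactors $-\overline{(-t)}\,\overline{u}^2$-type terms. The result should be $\frac{|s|}{\sqrt{\overline s}\sqrt s}K_w(z)$ up to checking the branch of the square root. This is a routine but careful calculation, essentially the same as the composition law for linear canonical transforms in \cite{DZ3}. Indeed, \cite{DZ3} shows that $T^{(s,t)}$ is unitary on $F^2$ with adjoint given by $T^{(\overline s,-t)}$ up to the constant coming from $\overline{K^{(s,t)}(z,w)}=\frac{\sqrt{\overline s}}{\overline{\sqrt s}}K^{(\overline s,-t)}(w,z)$. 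So on $F^2$ the identity follows directly from $T^*T=TT^*=I$, and this is how I would fix the constant. The symmetric computation, with the roles of $(s,t)$ and $(\overline s,-t)$ swapped, gives the other order.

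\textbf{Step 2 (extension to $F^p$).} For $1\le p<\infty$, finite linear combinations of kernels $K_w$ are dense in $F^p$. Both sides of each identity are bounded on $F^p$ and agree on this dense set, so the identities hold on all of $F^p$. For $p=\infty$ kernels are not norm dense, and I expect this to be the main obstacle. I would handle it in one of two ways. The first is a direct Fubini argument in the composed integral, as in Proposition~\ref{prop:Toeplitz-old-new}: the product $|K^{(\overline s,-t)}(z,u)K^{(s,t)}(u,v)f(v)|e^{-|u|^2-|v|^2}$ is integrable for fixed $z$ because $|t|/|s|<1$ provides Gaussian decay in $u$, and the inner $u$-integral is exactly the Step~1 kernel identity. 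Applying the reproducing formula $f(z)=\inc K(z,v)f(v)\,d\lambda(v)$, valid for $f\in F^\infty$, then gives $c\,T^{(\overline s,-t)}T^{(s,t)}f=f$. The second is to use that bounded sequences in $F^\infty$ approximating $f$ uniformly on compacta, such as the dilates $f(rz)$ with $r\to1^-$, are mapped to pointwise convergent images, since the integral operators are continuous for this convergence by dominated convergence. Between the two, I would try the Fubini route first. The key estimate is an integrability bound of the type $|K^{(s,t)}(u,v)|\le C\exp\bigl(\frac{|t|}{2|s|}(|u|^2+|v|^2)+\frac{|u||v|}{|s|}\bigr)$. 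The combined quadratic form in $(|u|,|v|)$, weighted by $e^{-|u|^2-|v|^2/2}$, is negative definite because $|t|/|s|+1/|s|$ stays controlled. This is the step that needs checking, since $|s|^2=|t|^2+1$ is the borderline case.
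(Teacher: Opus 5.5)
Your argument for $1\le p<\infty$ is correct and is organized differently from the paper's. The paper does not split by $p$. For every $f\in F^p$, $1\le p\le\infty$, it obtains $T^{(s,t)}\,c\,T^{(\overline s,-t)}f=f$ in one Fubini step. Absolute convergence comes from the boundedness of the positive operators $Q^{(s,t)}$, $Q^{(\overline s,-t)}$ on $L^p_g$ (Propositions~\ref{propc1} and~\ref{propc2} with $\varphi\equiv1$). The paper then applies the kernel identity from Lemma~4 of \cite{DZ3} and the reproducing formula; the reverse order follows from the symmetry $(s,t)\leftrightarrow(\overline s,-t)$, which fixes $c$.

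Your route is to get the kernel identity via \eqref{eqIC} or via unitarity on $F^2$, then use density of kernels and Theorem~\ref{thmBToep}. It needs no double-integral estimate when $p<\infty$, at the cost of a separate argument at $p=\infty$. Your Gaussian computation is right: the $z^2$ and $\overline w^2$ terms cancel and the integral equals exactly $cK_w(z)$. Since you state the targets as $c^{-1}I$, you should record that $c=\pm1$ (with $c=-1$ exactly when $s<0$), so $c=c^{-1}$.

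The step that fails is the integrability estimate for your preferred Fubini route at $p=\infty$. Bound each term of the exponent by its modulus and write $\rho=|t|/|s|$, $x=|u|$, $y=|v|$. You are left with the form $-(1-\rho)x^2+xy/|s|-\tfrac{1-\rho}{2}y^2$. By AM--GM this is negative definite on $x,y\ge0$ iff $\sqrt2\,(|s|-|t|)>1$. Since $(|s|-|t|)(|s|+|t|)=1$, this means $|s|+|t|<\sqrt2$, i.e.\ $\rho<1/3$. For $s=\sqrt2$, $t=1$, say, your bound proves nothing.

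The borderline case needs the exact Gaussian integral rather than the triangle inequality. Lemma~\ref{lemiTK} with $\alpha=1/2$ and $z=0$ gives
\[
\inc\bigl|K^{(s,t)}(u,v)\bigr|e^{-|v|^2/2}\,dA(v)=2\pi\sqrt{|s|}\,e^{|u|^2/2}\qquad\text{when } |s|^2=|t|^2+1,
\]
and likewise for $(\overline s,-t)$. Applying this twice gives absolute convergence of the double integral, which is exactly the paper's argument.

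Alternatively, your dilation route works as stated and does not need this. For fixed $u$, the crude bound already makes $v\mapsto|K^{(s,t)}(u,v)|e^{-|v|^2/2}$ integrable, since only $\rho<1$ is used. The bound $|T^{(s,t)}f_r(u)|\le\|T^{(s,t)}\|_{F^\infty\to F^\infty}\|f\|_\infty e^{|u|^2/2}$, from Theorem~\ref{thmBToep} and $\|f_r\|_\infty\le\|f\|_\infty$, gives the dominating function for the outer integral. Two applications of dominated convergence, together with the identity for $f_r\in F^2$, then settle $p=\infty$.
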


\begin{proof}
From the proof of Theorem~\ref{thmBToep}, we see that
the associated positive operator $Q^{(s,t)}$ of
$T_\varphi^{(s,t)}$ with $\varphi\equiv1$, which is defined by
$$Q^{(s,t)} f(z)=\inc \left|K^{(s,t)}(z,w)\right|f(w)\,d\lambda(w),$$
is bounded on $L^p_g$ for $1\leq p\leq\infty$ for any $(s, t)\in\C^2$
with $|s|^2=|t|^2+1$. Consequently,
$$ \inc\inc\left|K^{(s,t)}(z,u)K^{(\overline{s},-t)}(u,w)f(w)\right|
\,d\lambda(w)\,d\lambda(u)
=Q^{(s,t)} Q^{(\overline{s},-t)}(|f|)(z)<\infty.
$$
Recall from Lemma 4 of \cite{DZ3} that if $|s|^2=|t|^2+1$,
then
$$T^{(s,t)}K_w^{(\overline{s},-{t})}(z)=c\,K_w(z),\qquad z,w\in\C.$$
Combining this with Fubini's theorem, we obtain
\begin{align*}
T^{(s,t)} c T^{(\overline{s},-t)} f(z)&=c \inc K^{(s,t)}(z,u)
\left(\inc K^{(\overline{s},-t)}(u,w)f(w)\,d\lambda(w)
\right)\,d\lambda(u)\\
&=c\inc\left(\inc K^{(s,t)}(z,u)K^{(\overline{s},-t)}(u,w)
\,d\lambda(u)\right)f(w)\,d\lambda(w)\\
&=\inc K(z,w)f(w)\,d\lambda(w)=f(z)
\end{align*}
for any $f\in F^p$, which proves the desired result.
\end{proof}

Bauer and Isralowitz \cite{BI} showed that, for $1<p<\infty$, the
classical Toeplitz operator $T_\varphi$ with $\varphi\in L^\infty(\C, dA)$
is compact on $F^p$ if and only if $\widetilde{\varphi}(z)\to0$ as
$|z|\to\infty$. Recently, Fulsche \cite{Fu1} showed that, for
$\varphi\in L^\infty(\C, dA)$, the compactness of $T_\varphi$ on $F^p$
for any given $p\in[1,\infty]$ is equivalent to the compactness of
$T_\varphi$ on $F^2$. Thus we conclude from Lemma~\ref{lemTinver}
that, for any $(s, t)\in\C^2$ with $|s|^2=|t|^2+1$ and $1\leq p\leq\infty$,
$T_\varphi^{(s,t)}$ is compact on $F^p$ if and only if
$T_\varphi$ is compact on $F^p$. We summarize the analysis above
as follows.

\begin{theorem}\label{thmBSym}
Let $(s, t)\in\C^2$ with $|s|^2\geq|t|^2+1$, $1\leq p\leq\infty$, and
$\varphi\in L^\infty(\C, dA)$.
\begin{itemize}
\item[(a)] When $|s|^2>1+|t|^2$,  $T_\varphi^{(s,t)}$ is compact
on $F^p$.
\item[(b)] When $|s|^2=1+|t|^2$, $T_\varphi^{(s,t)}$ is compact
on $F^p$ if and only if $\widetilde{\varphi}(z)\to0$ as $z\to\infty$.
\end{itemize}
\end{theorem}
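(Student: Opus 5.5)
Theorem~\ref{thmBSym} mostly collects what has just been established, so the plan is to assemble the earlier pieces, treating the two cases separately.

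\emph{Part (a).} Assume $|s|^2>1+|t|^2$. Then $\beta_0>0$. Every $\varphi\in L^\infty(\C,dA)=L^\infty(\C,d\lambda_0)$ therefore lies in $L^\infty(\C,d\lambda_\beta)$ with $\beta=0<\beta_0$. Theorem~\ref{thmCToep} then gives compactness of $T_\varphi^{(s,t)}$ on $F^p$ for every $0<p\le\infty$, in particular for $1\le p\le\infty$. As a cross-check, Proposition~\ref{prop:Toeplitz-old-new} gives $T_\varphi^{(s,t)}=T^{(s,t)}T_\varphi$. Here $T_\varphi$ is bounded on $F^p$ by Theorem~\ref{thmBToep} applied with $(s,t)=(1,0)$, and $T^{(s,t)}$ is compact by Theorem~B of \cite{DZ4}, so the product is compact.

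\emph{Part (b).} Assume $|s|^2=1+|t|^2$. By Proposition~\ref{prop:Toeplitz-old-new}, $T_\varphi^{(s,t)}=T^{(s,t)}T_\varphi$ on $F^p$. By Lemma~\ref{lemTinver}, $T^{(s,t)}$ is invertible on $F^p$ with bounded inverse $c\,T^{(\overline s,-t)}$. Boundedness of this inverse follows from Theorem~\ref{thmBToep}(a) with $\varphi\equiv1$, since $|\overline s|^2=|-t|^2+1$. Hence $T_\varphi=c\,T^{(\overline s,-t)}T_\varphi^{(s,t)}$, and $T_\varphi^{(s,t)}$ is compact exactly when $T_\varphi$ is.

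This equivalence must be checked under the paper's sequential notion of compactness: a bounded sequence converging to $0$ uniformly on compacts is mapped to a norm-null sequence. In one direction, if $T_\varphi$ is compact and $f_n\to0$ in this sense, then $\|T_\varphi f_n\|_p\to0$, and bounded operators preserve norm convergence to $0$, so $\|T_\varphi^{(s,t)}f_n\|_p\to0$. The converse is the same argument using $T_\varphi=c\,T^{(\overline s,-t)}T_\varphi^{(s,t)}$. No weak-convergence argument is needed, so the endpoint cases $p=1,\infty$ are handled uniformly.

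It remains to characterise compactness of the classical Toeplitz operator. By Fulsche \cite{Fu1}, for bounded $\varphi$, compactness of $T_\varphi$ on $F^p$ for any single $p\in[1,\infty]$ is equivalent to compactness on $F^2$. On $F^2$ this is equivalent to $\widetilde\varphi(z)\to0$; the case $1<p<\infty$ is also covered by Bauer--Isralowitz \cite{BI}. Combining these gives (b).

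The main obstacle I expect is at the endpoints $p=1,\infty$. There I must confirm that the notion of compactness used in \cite{Fu1} matches the sequential definition adopted in this paper. If it does not, I would instead cite Fulsche's result in its operator-norm form and note that, for these Fock spaces, norm-compactness implies the sequential property via the standard fact that $f_n\to0$ uniformly on compacts with $\sup_n\|f_n\|_p<\infty$ forces $\|Kf_n\|\to0$ for compact $K$.
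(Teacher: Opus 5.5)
Your argument matches the paper's. Part (a) goes through Theorem~\ref{thmCToep} with $\beta=0<\beta_0$ (or through the factorization $T^{(s,t)}T_\varphi$ with $T^{(s,t)}$ compact). Part (b) goes through Proposition~\ref{prop:Toeplitz-old-new}, the inverse $c\,T^{(\overline s,-t)}$ from Lemma~\ref{lemTinver}, and \cite{BI,Fu1}. Your check that compactness passes between $T_\varphi$ and $T_\varphi^{(s,t)}$ under the paper's sequential definition is correct, and the paper leaves this point implicit.

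Your worry about the endpoints is legitimate, since the paper simply cites \cite{Fu1}. However, your fallback rests on a ``standard fact'' that is false at $p=1$ and $p=\infty$, the only cases where you need it. Take $a_j=j$ for $j\ge1$ and $h=\sum_j k_{a_j}$. Since $|h(z)|e^{-|z|^2/2}\le\sum_j e^{-|z-j|^2/2}$ is bounded, $h$ lies in $F^\infty=(F^1)^*$. So $Kf=\langle f,h\rangle\cdot 1$ is a rank-one, hence norm-compact, operator on $F^1$. Yet $\|Kk_{a_n}\|_1=\sum_j e^{-(n-j)^2/2}\ge 1$, even though the unit vectors $k_{a_n}$ tend to $0$ uniformly on compact sets. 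The same failure occurs on $F^\infty$: take the Banach-limit functional $f\mapsto\lim_{\mathcal{U}}f(a_j)e^{-|a_j|^2/2}$, with $\mathcal{U}$ a free ultrafilter, and test it on $f_m=\sum_{j\ge m}k_{a_j}$.

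The repair has two parts, one for each direction.

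First, sequential compactness always implies norm compactness on $F^p$. By Montel's theorem and Fatou's lemma, every bounded sequence has a subsequence converging uniformly on compact sets to some $f\in F^p$. You need this for ``compact $\Rightarrow \widetilde\varphi\to0$'', and your fallback omits it.

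Second, a norm-compact operator that is weak$^*$-continuous does send bounded, locally uniformly null sequences to norm-null ones. Such sequences are weak$^*$-null, and any norm cluster point of their images must equal the weak$^*$ limit $0$. The operator $T_\varphi$ is weak$^*$-continuous under the $F^2$ pairing:
\begin{itemize}
\item on $F^\infty=(F^1)^*$ it is the adjoint of $T_{\overline\varphi}$ acting on $F^1$;
\item on $F^1=(f^\infty)^*$ it is the adjoint of $T_{\overline\varphi}$ acting on the little Fock space $f^\infty$, which $T_{\overline\varphi}$ preserves.
\end{itemize}

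With both directions established for $T_\varphi$, Fulsche's norm-compactness criterion transfers to the paper's definition, and your proof goes through.
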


In the rest of this section we will obtain several estimates for the
Berezin transform of Toeplitz-type operators with bounded symbols.
We begin with the following upper bound from Theorem 5 of
\cite{DZ4} for the integral kernel $K^{(s,t)}(z,w)$: if $|s|>|t|$, then
\begin{equation}\label{eqUBKzw}
\left| K^{(s,t)}(z,w)\right|\leq c\, \exp\left[\frac{(|s|^2+1)(|z|^2
+|w|^2)}{3|s|^2-|t|^2+1}-\frac{(|s|^2-|t|^2-1)
\re({s\overline{z}w})}{|s|^2(3|s|^2-|t|^2+1)}\right]
\end{equation}
for any $z, w\in\C$, where
$$c=\frac{1}{\sqrt{|s|}}\left[\frac{2{|s|}}{\sqrt{4|s|^2-|t|^2} }
\right]^{\frac{2(4|s|^2-|t|^2)}{3|s|^2-|t|^2+1}}
\geq\frac{1}{\sqrt{|s|}}.$$
As an application, we obtain the following estimate for bivariate
Berezin transforms of Toeplitz-type operators on the Fock space.

\begin{lemma}\label{lembiBere}
Let $(s, t)\in\C^2$ with $|s|^2\geq|t|^2+1$ and
$\varphi\in L^\infty(\C,dA)$. Then on $F^{2}$,
\begin{align*}
&\left|\langle T_\varphi^{(s,t)}k_w, k_z\rangle\right|\\
&\leq\frac{c\|\varphi\|_{\infty,0}(3|s|^2-|t|^2+1)}{2|s|^2-|t|^2}
\exp\left[-\frac{(|s|^2-|t|^2-1)\left[|s|^2(|z|^2+|w|^2)+
2\re ({s\overline{z}w})\right]}{4|s|^2(2|s|^2-|t|^2)}\right],
\end{align*}
where $c\geq\frac{1}{\sqrt{|s|}}$ as in \eqref{eqUBKzw}.
\end{lemma}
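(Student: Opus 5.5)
By Proposition~\ref{propBT}, and since $|\varphi|\le\|\varphi\|_{\infty,0}$, I will start from
$$\left|\langle T_\varphi^{(s,t)}k_w,k_z\rangle\right|\le \frac{\|\varphi\|_{\infty,0}}{\pi}\,e^{-\frac{|z|^2+|w|^2}{2}}\inc \left|K^{(s,t)}(z,u)\right|\,|e^{u\overline w}|\,e^{-|u|^2}\,dA(u).$$
Next I insert the upper bound \eqref{eqUBKzw} for $|K^{(s,t)}(z,u)|$. Note that $|s|>|t|$ holds because $|s|^2\ge|t|^2+1$. Write $D=3|s|^2-|t|^2+1$ and $a=(|s|^2+1)/D$. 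The integrand is then bounded by
$$c\,e^{a|z|^2}\exp\left(-(1-a)|u|^2+\re\left(u\overline w\right)-\frac{(|s|^2-|t|^2-1)}{|s|^2D}\re(s\overline z u)\right).$$
The exponent is a Gaussian in $u$ with a real-linear term. Since $1-a=(2|s|^2-|t|^2)/D>0$, the integral converges and can be computed exactly. Write the linear term as $\re(u\overline{b})$ with
$$b=w-\frac{(|s|^2-|t|^2-1)}{|s|^2D}\,\overline{s}z.$$
Then $\inc e^{-(1-a)|u|^2+\re(u\overline b)}dA(u)=\frac{\pi}{1-a}\exp\left(\frac{|b|^2}{4(1-a)}\right)$. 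The prefactor $\frac{\pi}{1-a}=\frac{\pi D}{2|s|^2-|t|^2}$ produces exactly the constant $\frac{c\|\varphi\|_{\infty,0}(3|s|^2-|t|^2+1)}{2|s|^2-|t|^2}$ in the statement.

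The remaining work is bookkeeping of the exponent
$$E=-\frac{|z|^2+|w|^2}{2}+a|z|^2+\frac{D|b|^2}{4(2|s|^2-|t|^2)}.$$
Put $\kappa=|s|^2-|t|^2-1$, so that $D=2|s|^2-|t|^2+(|s|^2+1)$ and $D=2(2|s|^2-|t|^2)-\kappa$. Expanding $|b|^2=|w|^2-\frac{2\kappa}{|s|^2D}\re(s\overline z w)+\frac{\kappa^2}{|s|^2D^2}|z|^2$, I will then collect the coefficients.

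For the $|w|^2$ coefficient, $-\tfrac12+\frac{D}{4(2|s|^2-|t|^2)}=-\frac{\kappa}{4(2|s|^2-|t|^2)}$, which matches the target.

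For the cross term, the coefficient of $\re(s\overline z w)$ is $-\frac{2\kappa}{4|s|^2(2|s|^2-|t|^2)}$, also matching.

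For the $|z|^2$ coefficient, I need
$$-\tfrac12+\frac{|s|^2+1}{D}+\frac{\kappa^2}{4|s|^2D(2|s|^2-|t|^2)}=-\frac{\kappa}{4(2|s|^2-|t|^2)}.$$
Multiplying by $4|s|^2D(2|s|^2-|t|^2)$ reduces this to a polynomial identity. It should follow by substituting $|s|^2+1=D-(2|s|^2-|t|^2)$ and $D=2(2|s|^2-|t|^2)-\kappa$. Verifying this identity is the only delicate step. If it turned out to give only an inequality rather than an equality, that would still suffice, since we only need an upper bound, provided the sign goes the right way. I expect equality, because the target exponent is symmetric in $z$ and $w$ and the other two coefficients already match.
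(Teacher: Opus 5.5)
Your route is the paper's route: the reproducing-kernel expression from Proposition~\ref{propBT}, the kernel bound \eqref{eqUBKzw}, and an exact Gaussian integral in $u$. Your prefactor, your $|w|^2$ coefficient and your cross-term coefficient are all correct.

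The gap is the step you yourself call delicate. The identity you expect for the $|z|^2$ coefficient is false whenever $|s|^2>|t|^2+1$. Write $m=2|s|^2-|t|^2$, so that $D=2m-\kappa$. Then $-\tfrac12+\frac{|s|^2+1}{D}=-\frac{\kappa}{2D}$, and the target coefficient can be rewritten as $-\frac{\kappa}{4m}=-\frac{\kappa}{2D}+\frac{\kappa^2}{4Dm}$. After your substitution, the claimed identity therefore reduces to $\frac{\kappa^2}{4|s|^2Dm}=\frac{\kappa^2}{4Dm}$. This fails as soon as $\kappa>0$, since then $|s|>1$. What is actually true is
$$-\frac{\kappa}{4m}-\Bigl(-\frac{\kappa}{2D}+\frac{\kappa^2}{4|s|^2Dm}\Bigr)=\frac{(|s|^2-1)\kappa^2}{4|s|^2Dm}\ge 0.$$
This difference is nonnegative precisely because $|s|^2\ge|t|^2+1\ge 1$.

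So your exact exponent is not symmetric in $z$ and $w$; its $|z|^2$ coefficient is strictly smaller than its $|w|^2$ coefficient when $\kappa>0$. The symmetry heuristic you gave for expecting equality is therefore misleading. The symmetric bound in the statement is reached only by discarding the factor $1/|s|^2$ in the $\kappa^2|z|^2$ term, and that uses the input $|s|\ge 1$. This is exactly the step where the paper writes ``Note that $|s|\geq 1$''.

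Your fallback clause (``provided the sign goes the right way'') is thus rescued, but only after you supply this computation and the inequality $|s|\ge1$. As written, the verification you propose, a polynomial identity, would fail.
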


\begin{proof}
Since $\varphi\in L^\infty(\C,dA)$, it follows from
Theorem~\ref{thmBToep} that $T_\varphi^{(s,t)}K_w\in F^2$.
Then we have
\begin{align*}
\left|\langle T_\varphi^{(s,t)}k_w, k_z\rangle\right|&
=e^{-\frac{|z|^2+|w|^2}{2}}\left| T^{(s,t)}(\varphi K_w)(z)
\right|\\&\leq{\|\varphi\|_{\infty,0}}
e^{-\frac{|z|^2+|w|^2}{2}}\inc\left|K^{(s,t)}(z,v)\right|
|K_w(v)|d\lambda(v).
\end{align*}
Combining \eqref{eqUBKzw} with Corollary 2.5 of \cite{Zhu1}, we obtain
\begin{align*}
&e^{-\frac{|z|^2}{2}}\inc \left|K^{(s,t)}(z,v)\right|
|K_w(v)|\,d\lambda(v)\\
&\leq ce^{-\frac{(|s|^2-|t|^2-1)|z|^2}{2(3|s|^2-|t|^2+1)}}\,
\inc e^{\frac{(|s|^2+1)|v|^2}{3|s|^2-|t|^2+1}}
\left|e^{-\frac{(|s|^2-|t|^2-1) {s\overline{z}v}}
{|s|^2(3|s|^2-|t|^2+1)}+\overline{w}v}\right|d\lambda(v)\\
&=\frac{c(3|s|^2-|t|^2+1)}{2|s|^2-|t|^2}e^{-\frac{(|s|^2-|t|^2-1)
|z|^2}{2(3|s|^2-|t|^2+1)}}\, e^{\frac{3|s|^2-|t|^2+1}
{4(2|s|^2-|t|^2)}\left|-\frac{(|s|^2-|t|^2-1) {\overline{s }z}}
{|s|^2(3|s|^2-|t|^2+1)}+{w}\right|^2}.
\end{align*}
Note that $|s|\geq1$. A few lines of elementary calculations show that
\begin{multline*}
-\frac{|w|^2}{2}+{\frac{3|s|^2-|t|^2+1}{4(2|s|^2-|t|^2)}\left|
-\frac{(|s|^2-|t|^2-1) {\overline{s }z}}{|s|^2(3|s|^2-|t|^2+1)}+{w}
\right|^2}\\
\leq{-\frac{(|s|^2-|t|^2-1)}{4(2|s|^2-|t|^2)}\left[
\frac{-(|s|^2-|t|^2-1)}{3|s|^2-|t|^2+1}|z|^2+|w|^2
+\frac{2\re ({s\overline{z}w})}{|s|^2}\right]}
\end{multline*}
and
$$-\frac{1}{3|s|^2-|t|^2+1}+\frac{|s|^2-|t|^2-1}{2(2|s|^2-|t|^2)
(3|s|^2-|t|^2+1)}=-\frac{1}{2(2|s|^2-|t|^2)}.$$
The desired result then follows.
\end{proof}

The following result gives some upper estimates for the Berezin
transform of Toeplitz-type operators with bounded symbols.

\begin{theorem}\label{thmBTBT}
Let $(s, t)\in\C^2$ with $|s|^2\geq|t|^2+1$ and
$\varphi\in L^\infty(\C,dA)$. Then the Berezin transforms of
Toeplitz-type operators on $F^{2}$ have the following properties.
\begin{enumerate}
\item[(a)] If $|s|^2>|t|^2+1$, then
$$\left|\widetilde{T_\varphi^{(s,t)}}(z)\right|\leq
\frac{c(3|s|^2-|t|^2+1)\|\varphi\|_{\infty,0}}{2|s|^2-|t|^2}
\exp\left[{-\frac{(|s|^2-|t|^2-1)\left(|s|^2+\re {s}\right)}
{2|s|^2(2|s|^2-|t|^2)}|z|^2}\right],$$
where $c$ is the constant from \eqref{eqUBKzw}.
\item[(b)] If $|s|^2=|t|^2+1$ with $(s,t)\neq (1,0)$, then
$$\left|\widetilde{T_\varphi^{(s,t)}}(z)\right|\leq
\frac{2\|\varphi\|_{\infty,0}}{\sqrt{4|s|^2-|t|^2}}
\exp\left[-\frac{(\re s-1)^2|z|^2}{2(|s|^2-\re s)+
|t|\sqrt{(\re s-1)^2+4(\im s)^2}}\right].$$
\item[(c)] If $|s|^2=|t|^2+1$ with $(s,t)=(1,0)$, then
$$\left|\widetilde{T_\varphi^{(s,t)}}(z)\right|
\leq \|\varphi\|_{\infty,0}.$$
\end{enumerate}
\end{theorem}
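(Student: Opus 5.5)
Part (a) follows directly from Lemma~\ref{lembiBere}. Setting $w=z$ in that lemma gives
$$|s|^2(|z|^2+|w|^2)+2\re(s\overline z w)=2(|s|^2+\re s)|z|^2 .$$
The exponent therefore becomes
$$-\frac{(|s|^2-|t|^2-1)(|s|^2+\re s)}{2|s|^2(2|s|^2-|t|^2)}|z|^2,$$
and the constant matches the one stated. This is negative-definite in $|z|^2$ because $|s|^2+\re s\ge |s|^2-|s|>0$ once $|s|>1$, and $|s|^2>|t|^2+1$ forces $|s|>1$. Part (c) is also immediate. Here $T^{(1,0)}_\varphi=T_\varphi$, and $|\langle T_\varphi k_z,k_z\rangle|\le \|\varphi\|_{\infty,0}\int |k_z|^2\,d\lambda=\|\varphi\|_{\infty,0}$.

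Part (b) is the real work, since the bound in Lemma~\ref{lembiBere} degenerates when $|s|^2=|t|^2+1$. In this case I would compute exactly rather than estimate. By Proposition~\ref{propBT},
$$\bigl|\widetilde{T^{(s,t)}_\varphi}(z)\bigr|\le \|\varphi\|_{\infty,0}\, e^{-|z|^2}\inc |K^{(s,t)}(z,u)|\,|K_z(u)|\,d\lambda(u).$$
I would evaluate the integral with the second identity of Lemma~\ref{lemiTK}, taking $\alpha=1$ (allowed since $1>|t|/(2|s|)$) and $w=z$. This gives the prefactor $2\pi\sqrt{|s|}/\sqrt{4|s|^2-|t|^2}$. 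After dividing by $\pi$ from $d\lambda$, this yields exactly $2/\sqrt{4|s|^2-|t|^2}$ times $\sqrt{|s|}$; I expect the $\sqrt{|s|}$ to cancel against the $1/\sqrt{|s|}$ built into $|K^{(s,t)}|$, and that factor would need tracking. The remaining exponential factor is
$$\exp\Bigl(\re\Bigl[\tfrac{\overline s t}{2|s|^2}z^2-\tfrac{\overline s t(1+s)^2z^2}{2|s|^2(4|s|^2-|t|^2)}\Bigr]+\tfrac{|1+s|^2|z|^2}{4|s|^2-|t|^2}-|z|^2\Bigr).$$

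The next step is to bound the real part of the $z^2$ term by its modulus times $|z|^2$. The coefficient simplifies to
$$\frac{\overline s t}{2|s|^2}\cdot\frac{4|s|^2-|t|^2-(1+s)^2}{4|s|^2-|t|^2}.$$
Using $|t|^2=|s|^2-1$, the numerator becomes $3|s|^2+1-(1+s)^2$. Writing $s=x+iy$, I would check that
$$|s|^2+1-2\re s=(x-1)^2+y^2,\qquad 3|s|^2+1-(1+s)^2=3x^2+3y^2-x^2+y^2-2x-2ixy+\dots$$
The resulting modulus should be expressible through $\sqrt{(\re s-1)^2+4(\im s)^2}$, because $|(1-s)(\text{something})|$-type factorizations occur. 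The real part, $\frac{|1+s|^2}{4|s|^2-|t|^2}-1$, equals
$$\frac{|1+s|^2-3|s|^2-1}{3|s|^2+1}=\frac{-2(|s|^2-\re s)}{3|s|^2+1}.$$
So the exponent is $-\frac{2(|s|^2-\re s)-|t|\,|N|/|s|}{2(3|s|^2+1)}|z|^2$ for the appropriate numerator $N$.

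The main obstacle is the final algebra: showing that this exponent equals, or is at most,
$$-\frac{(\re s-1)^2}{2(|s|^2-\re s)+|t|\sqrt{(\re s-1)^2+4(\im s)^2}}\,|z|^2 .$$
I would first try to rationalize, writing $A=2(|s|^2-\re s)$ and $B=|t|\cdot(\text{modulus term})$. The aim is to show $(A-B)/D=(A^2-B^2)/(D(A+B))$, and that $A^2-B^2$ factors as $D\,(\re s-1)^2$ times a constant. This would produce exactly the stated denominator $A+B$. The factorization uses $|t|^2=|s|^2-1$, and it should also confirm that the bound is nontrivial precisely when $\re s\ne1$, consistent with Theorem C.
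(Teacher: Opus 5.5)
Your plan follows the paper's proof. Part (a) is Lemma~\ref{lembiBere} at $w=z$. Part (b) is Proposition~\ref{propBT} plus the second identity of Lemma~\ref{lemiTK} with $\alpha=1$, $w=z$, then bounding the real part of the $z^2$-term by its modulus and rationalizing. Part (c) is trivial, and your direct argument via $\int|k_z|^2\,d\lambda=1$ is fine.

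The gap is at the point you flagged: the factor $\sqrt{|s|}$ does \emph{not} cancel. The prefactor in Lemma~\ref{lemiTK} is $\frac{\pi}{\alpha\sqrt{|s|}}\cdot\frac{2\alpha|s|}{\sqrt{4\alpha^2|s|^2-|t|^2}}$, so the $1/\sqrt{|s|}$ from $|K^{(s,t)}|$ is already inside it. Your argument therefore yields (b) only with the constant $2\sqrt{|s|}\,\|\varphi\|_{\infty,0}/\sqrt{4|s|^2-|t|^2}$. The paper's proof drops this factor at the same step, and the constant as printed is actually false. Take $s=1+i|t|$ with $t\neq0$ and $\varphi\equiv1$. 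The computation in the proof of Theorem C gives $|\widetilde{T^{(s,t)}}(z_n)|=1/\sqrt{|s|}$. The stated bound is $2/\sqrt{3|s|^2+1}$, since the exponential equals $1$ when $\re s=1$. But $1/\sqrt{|s|}>2/\sqrt{3|s|^2+1}$ iff $(3|s|-1)(|s|-1)>0$, i.e.\ whenever $|s|>1$; for $s=1+i$, $t=1$ you get $2^{-1/4}\approx0.84$ against $2/\sqrt7\approx0.76$. So (b) cannot be proved as stated. What you can prove is (b) with the extra $\sqrt{|s|}$, which leaves the decay rate, part (c) (where $|s|=1$), and Theorem C(1) intact.

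For the remaining algebra, your combined exponent has a factor-$2$ slip. The real-part contribution is $-2(|s|^2-\re s)/(3|s|^2+1)$, not half of that; your $|N|$-term is correct. Using $|t|^2=|s|^2-1$, the $z^2$-coefficient is $\frac{t(3\overline s-s-2)}{2(3|s|^2+1)}$, and $|3\overline s-s-2|=2\sqrt{(\re s-1)^2+4(\im s)^2}$. So the exponent is at most $-\frac{A-B}{3|s|^2+1}|z|^2$, where $A=2(|s|^2-\re s)$ and $B=|t|\sqrt{(\re s-1)^2+4(\im s)^2}$. The factorization you want is, for $s=x+iy$,
$$4(x^2+y^2-x)^2-\bigl[(x-1)^2+4y^2\bigr](x^2+y^2-1)=(x-1)^2\bigl[3(x^2+y^2)+1\bigr],$$
that is, $A^2-B^2=(\re s-1)^2(3|s|^2+1)$. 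Dividing by $A+B$ gives the stated exponent; $A+B$ is positive exactly when $(s,t)\neq(1,0)$, since $|s|^2-\re s\ge|s|(|s|-1)\ge0$ with equality only at $s=1$.
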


\begin{proof}
Part (a) clearly follows from Lemma \ref{lembiBere}.

We now assume that $|s|^2=|t|^2+1$. By
Proposition~\ref{propBT} and Lemma \ref{lemiTK}, we have
\begin{multline*}
\left|\widetilde{T_\varphi^{(s,t)}}(z)\right|
\leq\frac{\|\varphi\|_{\infty,0}}{\pi}e^{-{|z|^2}}\inc
\left|K^{(s,t)}(z, v)K_z(v)\right| \exp\left(-|v|^2\right)\,dA(v)\\
=\frac{2\|\varphi\|_{\infty,0}}{\sqrt{4|s|^2-|t|^2}}e^{-{|z|^2}}
\left|\exp\left(\frac{\overline{s}t}{2|s|^2}z^2-\frac{\overline{s}t
\left({z}+{s}{z}\right)^{2}}{2|s|^2(4|s|^2-|t|^2)}\right)\right|
\exp\left(\frac{\left|{z}+{s}{z}\right|^2}{4|s|^2-|t|^2}\right).
\end{multline*}
Since $|s|^2=|t|^2+1$, we have
\begin{align*}
&\left|\exp\left(\frac{\overline{s}t}{2|s|^2}z^2-\frac{\overline{s}t
\left(1+{s}\right)^2{z}^{2}}{2|s|^2(4|s|^2-|t|^2)}\right)\right|
\exp\left(\frac{\left|1+{s}\right|^2{|z|^2}}{4|s|^2-|t|^2}
-{|z|^2}\right)\\
&\quad=\left|\exp\left(\frac{3\overline{s}-s-2}{2(3|s|^2+1)}
t{z}^{2}\right)\right|\exp\left(-\frac{2(|s|^2-\re s)}
{3|s|^2+1}{|z|^2}\right)\\
&\quad=\exp\left(\frac{\re (s-1)\re (t{z}^{2})}{3|s|^2+1}
+\frac{2\im s\im (tz^2)}{3|s|^2+1} \right)
\exp\left(-\frac{2(|s|^2-\re s)}{3|s|^2+1}{|z|^2}\right).
\end{align*}
Note that
$$\re (s-1)\re (t{z}^{2})+2\im s\im (tz^2)\leq
\sqrt{(\re s-1)^2+4(\im s)^2}\,|t||z|^2.$$
So we have
\begin{multline}\label{eqBere}
\left|\widetilde{T_\varphi^{(s,t)}}(z)\right|\leq
\frac{2\|\varphi\|_{\infty,0}}{\sqrt{3|s|^2+1}}\\
\cdot\exp\left(-\frac{2(|s|^2-\re s)-|t|\sqrt{(\re s-1)^2+
4(\im s)^2}}{3|s|^2+1}{|z|^2}\right).
\end{multline}

To prove (b) we further assume $(s,t)\neq (1,0)$. It is clear that
$$4(x^2+y^2-x)^2-\left[(x-1)^2+4y^2\right](x^2+y^2-1)=(x-1)^2\left[3(x^2+y^2)+1\right]$$
for all real numbers $x$ and $y$.
Let $\re s=x$ and $\im s=y$. Since
$$2(|s|^2-\re s)+|t|\sqrt{(\re s-1)^2+4(\im s)^2}\neq0,$$
it follows that
\begin{multline*}
\frac{2(|s|^2-\re s)-|t|\sqrt{(\re s-1)^2+4(\im s)^2}}{3|s|^2+1}\\
=\frac{(\re s-1)^2}{2(|s|^2-\re s)+|t|\sqrt{(\re s-1)^2+4(\im s)^2}}.
\end{multline*}
Combining this with \eqref{eqBere}, we see that part (b) holds.

Part (c) clearly follows from \eqref{eqBere}. This
completes the proof of the theorem.
\end{proof}

We are now ready to prove Theorem C.

\begin{proof}[Proof of Theorem C]
Part (a) follows directly from Theorem~\ref{thmBTBT}. We now
assume that $s= 1\pm i|t|$. Consider the canonical integral operator
$T^{(s,t)}$ on $F^2$, which is $T_\varphi^{(s,t)}$ with
$\varphi\equiv1\in L^\infty(\C,dA)$. By Proposition~\ref{propBT}
and \eqref{eqTKw}, we have
\begin{align*}
\left|\widetilde{T^{(s,t)}}(z)\right|&=\frac1{\sqrt{|s|}}\exp
\left[\re \frac{tz^2-\overline{tz^2}+2|z|^2}{2s}-|z|^2\right]\\
&=\frac1{\sqrt{|s|}}\exp\left[\frac{\im s \im (tz^2)+(\re s-|s|^2)
|z|^2}{|s|^2}\right].
\end{align*}
It follows from $s= 1\pm i|t|$ that
$$\left|\widetilde{T^{(s,t)}}(z)\right|=\frac1{\sqrt{|s|}}\exp
\left[\frac{|t|}{|s|^2}\left(\pm \im (tz^2)-|tz^2|\right)\right].$$
In accordance with the $\pm$ in $s=1\pm i|t|$, we let $\{z_n\}$
be a sequence of nonzero complex numbers such that
$\pm\im (tz_n^2)=|tz_n^2|$ and such that
the sequence $\{|z_n|\}$ increases to $\infty$.
Then it is clear that $\left|\widetilde{T^{(s,t)}}(z_n)\right|
=1/\sqrt{|s|}$. This proves part (b) and
completes the proof of the theorem.
\end{proof}


\begin{thebibliography}{99}

\bibitem{B1} V. Bargmann, On a Hilbert space of analytic functions
and an associated integral transform I, \textit{Comm. Pure Appl. Math}.
\textbf{14} (1961), 187-214.
\bibitem{BI} W. Bauer and J. Isralowitz, Compactness characterization
of operators in the Toeplitz algebra of the Fock space $F_\alpha^p$,
\textit{J. Funct. Anal.} \textbf{263} (2012), 1323-1355.
\bibitem{DZ3} X. Dong and K. Zhu, Canonical integral operators on
the Fock space, \textit{Math. Z.} 2024, \textit{306}:64.
\bibitem{DZ4} X. Dong and K. Zhu, Canonical integral operators on
the Fock space II, arXiv:2509.16672.
\bibitem{Fu1} R. Fulsche, A Wiener algebra for Fock space operators,
Operator Theory, Related Fields, and Applications. IWOTA 2023.
Operator Theory: Advances and Applications {\bf 307},
Birkh\"{a}user, Cham, 2025.
\bibitem{Pi}
G. Pisier, \textit{Martingales in Banach spaces}, Cambridge Studies in Advanced Mathematics, {\bf 155}, Cambridge, 2016.
\bibitem{LZZ} Z. Lou, K. Zhu, and S. Zhu, Linear operators on Fock
spaces, \textit{Integral Equations Operator Theory}
\textbf{88} (2017), 287-300.
\bibitem{Zhu1} K. Zhu, \textit{Analysis on Fock Spaces}, Springer,
New York, 2012.
\bibitem{Zhu2} K. Zhu, \textit{Operator Theory in Function Spaces}
(2nd edition), Mathematical Surveys and Monographs {\bf 138},
American Mathematical Society, 2007.
\end{thebibliography}
\end{document}